\numberwithin{equation}{section}
\newcounter{counter_a}
\newenvironment{myenum}{\begin{list}{{\rm(\roman{counter_a})}}%
{\usecounter{counter_a}
\setlength{\itemsep}{1.ex}\setlength{\topsep}{1.5ex}
\setlength{\leftmargin}{5ex}\setlength{\labelwidth}{5ex}}}{\end{list}}
\newcommand\ds{\displaystyle}
\DeclareMathOperator{\dom}{dom}
\DeclareMathOperator{\ran}{ran}
\newcommand{\rmO}{{\rm O}}
\numberwithin{fig}{section}
\newtheorem{thm}{Theorem}[section]
 \newtheorem{cor}[thm]{Corollary}
 \newtheorem{lem}[thm]{Lemma}
 \newtheorem{prop}[thm]{Proposition}
 \theoremstyle{definition}
 \newtheorem{dfn}[thm]{Definition}
 \newtheorem*{dfn*}{Definition}
 \theoremstyle{remark}
 \newtheorem{rem}[thm]{Remark}
\numberwithin{equation}{section}
\newcommand{\ba}{\begin{array}}
\newcommand{\ea}{\end{array}}
\newcommand{\bea}{\begin{eqnarray}}
\newcommand{\eea}{\end{eqnarray}}
\newcommand{\bead}{\begin{eqnarray*}}
\newcommand{\eead}{\end{eqnarray*}}
\newcommand{\be}{\begin{equation}}
\newcommand{\ee}{\end{equation}}
\newcommand{\bed}{\begin{displaymath}}
\newcommand{\eed}{\end{displaymath}}
\newcommand{\bl}{\begin{lem}}
\newcommand{\el}{\end{lem}}
\newcommand{\bp}{\begin{prop}}
\newcommand{\ep}{\end{prop}}
\newcommand{\bt}{\begin{thm}}
\newcommand{\et}{\end{thm}}
\newcommand{\bc}{\begin{cor}}
\newcommand{\ec}{\end{cor}}
\newcommand{\br}{\begin{rem}}
\newcommand{\er}{\end{rem}}
\newcommand{\bd}{\begin{defn}}
\newcommand{\ed}{\end{defn}}
\newcommand\cB{\mathcal B}
\newcommand\cG{\mathcal G}
\newcommand\cH{\mathcal H}
\newcommand\cK{\mathcal K}
\newcommand\cL{\mathcal L}
\newcommand\CC{\mathbb C}
\newcommand\NN{\mathbb N}
\newcommand\frA{\mathfrak A}
\newcommand\frS{\mathfrak S}
\newcommand\ov{\overline}
\newcommand{\defeq}{\mathrel{\mathop:}=}
\newcommand{\defequ}{\mathrel{\mathop:}\hspace*{-0.72ex}&=}
\newcommand\AD{A_{\rm D}}
\newcommand\AN{A_{\rm N}}
\newcommand\void[1]{}
\def\AD{A_{\rm D}}
\def\AN{A_{\rm N}}
\DeclareMathOperator\tr{tr}
\def\sS{{\mathfrak S}}
   \def\dN{{\mathbb N}}   
      \def\dR{{\mathbb R}}
   \def\cB{{\mathcal B}}   
\def\cG{{\mathcal G}}   \def\cH{{\mathcal H}}   
   \def\cK{{\mathcal K}}   \def\cL{{\mathcal L}}
\begin{document}

\title[Trace formulae and singular values]{Trace formulae and singular values of \\
resolvent power differences of \\ self-adjoint elliptic operators}

\author{Jussi Behrndt}
\address{Technische Universit\"{a}t Graz,
Institut f\"{u}r Numerische Mathematik \\
Steyrergasse 30,
8010 Graz, Austria}
\email{behrndt@tugraz.at}

\author{Matthias Langer}

\address{Department of Mathematics and Statistics,
University of Strathclyde,
26 Richmond Street, Glasgow G1 1XH, United Kingdom}
\email{m.langer@strath.ac.uk}

\author{Vladimir Lotoreichik}

\address{Technische Universit\"{a}t Graz,
Institut f\"{u}r Numerische Mathematik\\
Steyrergasse 30,
8010 Graz, Austria}
\email{lotoreichik@math.tugraz.at}

\begin{abstract}
In this note self-adjoint realizations of second order elliptic differential
expressions with non-local Robin boundary conditions on a domain $\Omega\subset\dR^n$
with smooth compact boundary are studied. A Schatten--von Neumann type estimate for
the singular values of the difference of the $m$th powers of the resolvents of two
Robin realizations is obtained, and for $m>\tfrac{n}{2}-1$ it is shown that the
resolvent power difference is a trace class operator. The estimates are slightly
stronger than the classical singular value estimates by M.\,Sh.~Birman where one
of the Robin realizations is replaced by the Dirichlet operator.
In both cases trace formulae are proved, in which the trace of the resolvent
power differences in $L^2(\Omega)$ is written in terms of the trace of derivatives of
Neumann-to-Dirichlet and Robin-to-Neumann maps on the boundary space $L^2(\partial\Omega)$.
\end{abstract}

\subjclass[2010]{Primary 35P05, 35P20; Secondary 47F05, 47L20, 81Q10, 81Q15}
\keywords{elliptic operator,
self-adjoint extension,
Schatten--von~Neumann class, trace formula, Dirichlet-to-Neumann map}

\maketitle

\section{Introduction}

Let $\Omega\subset\dR^n$ be a bounded or unbounded domain with smooth compact boundary and
let $\cL$ be a formally symmetric second order elliptic differential expression with variable
coefficients defined on $\Omega$. As a simple example one may consider $\cL=-\Delta$
or $\cL=-\Delta+V$ with some real function $V$. Denote by $\AD$ the self-adjoint Dirichlet
operator associated with $\cL$ in $L^2(\Omega)$ and let $A_{[\beta]}$ be a self-adjoint
realization of $\cL$ in $L^2(\Omega)$ with Robin boundary conditions of the form
$\beta f|_{\partial\Omega}=\tfrac{\partial f}{\partial\nu}\vert_{\partial\Omega}$
for functions $f\in\dom A_{[\beta]}$. Here $\beta$ is a real-valued bounded function
on $\partial\Omega$; in the special case $\beta=0$ one obtains the Neumann operator $\AN$
associated with $\cL$.

Half a century ago it was observed by M.\,Sh.~Birman in his fundamental
paper \cite{B62} that the difference of the resolvents of $\AD$ and $A_{[\beta]}$
is a compact operator whose singular values $s_k$ satisfy
$s_k=\rmO\bigl(k^{-\frac{2}{n-1}}\bigr)$, $k\to\infty$, that is,
\begin{equation}\label{est1}
  (A_{[\beta]}-\lambda)^{-1}-(\AD-\lambda)^{-1}\in\sS_{\frac{n-1}{2},\infty},\qquad
  \lambda\in\rho(A_{[\beta]})\cap\rho(\AD),
\end{equation}
where $\sS_{p,\infty}$ denotes the weak Schatten--von Neumann ideal of order $p$;
for the latter see \eqref{def_Sp} below.
The difference of higher powers of the resolvents of $\AD$ and $A_{[\beta]}$
lead to stronger decay conditions of the form
\begin{equation}\label{est2}
  (A_{[\beta]}-\lambda)^{-m}-(\AD-\lambda)^{-m}\in\sS_{\frac{n-1}{2m},\infty},\qquad
  \lambda\in\rho(A_{[\beta]})\cap\rho(\AD);
\end{equation}
see, e.g.\ \cite{B62,G74,G84,G11-1,M10}.
The estimate \eqref{est1} for the decay of the singular values
is known to be sharp if $\beta$ is smooth, see \cite{BS80,G74,G84,G11-1}, and \cite{G11-2} for the case $\beta\in L^\infty(\partial\Omega)$;
the estimate \eqref{est2} is sharp for smooth $\beta$ by \cite{G84,G11-1}.
Observe that, for $m>\tfrac{n-1}{2}$,
the operator in \eqref{est2} belongs to the trace class ideal, and hence the wave operators
for the scattering pair $\{\AD,A_{[\beta]}\}$ exist and are complete, and the absolutely
continuous parts of $\AD$ and $A_{[\beta]}$ are unitarily equivalent.
A simple consequence of one of our main results in the present paper is the following representation
for the trace of the operator in \eqref{est2} (see Theorem~\ref{thm3}):
\begin{equation}\label{carron}
\begin{split}
  & \tr\bigl((A_{[\beta]}-\lambda)^{-m} - (\AD-\lambda)^{-m}\bigr)\\
  &\qquad = \frac{1}{(m-1)!}
  \tr\biggl(\frac{d^{m-1}}{d\lambda^{m-1}}
  \Bigl(\bigl(I- M(\lambda)\beta \bigr)^{-1}M(\lambda)^{-1}
  M'(\lambda)\Bigr)\biggr),
\end{split}
\end{equation}
where $M(\lambda)$ is the Neumann-to-Dirichlet map (i.e.\ the inverse of the Dirichlet-to-Neumann map)
associated with $\cL$;
see also \cite[Corollary~4.12]{BLL11} for $m=1$.
In the special case that $A_{[\beta]}$ is the Neumann operator $\AN$, that is $\beta=0$,
the above formula simplifies to
\begin{equation}\label{carron2}
  \tr\bigl((\AN-\lambda)^{-m} - (\AD-\lambda)^{-m}\bigr) = \frac{1}{(m-1)!}
  \tr\biggl(\frac{d^{m-1}}{d\lambda^{m-1}}\Big(M(\lambda)^{-1}M'(\lambda)\Big)\biggr),
\end{equation}
which is an analogue of \cite[Th\'eor\`eme~2.2]{Ca02} and reduces
to \cite[Corollary~3.7]{AB09} in the case $m=1$.  We point out that the
right-hand sides in \eqref{carron} and \eqref{carron2} consist of traces of
operators in the boundary space $L^2(\partial\Omega)$, whereas the left-hand sides
are traces of operators in $L^2(\Omega)$.
Some related reductions for ratios of Fredholm perturbation determinants
can be found in \cite{GMZ07}.
We also refer to \cite{GHSZ96} for other types of trace formulae for Schr\"odinger operators.

Recently, it was shown in \cite{BLLLP10} that if one considers two
self-adjoint Robin realizations $A_{[\beta_1]}$ and $A_{[\beta_2]}$ of $\cL$,
then the estimate \eqref{est1} can be improved to
\begin{equation}\label{est3}
  (A_{[\beta_1]}-\lambda)^{-1}-(A_{[\beta_2]}-\lambda)^{-1}\in\sS_{\frac{n-1}{3},\infty},
\end{equation}
so that, roughly speaking, any two Robin realizations with bounded coefficients $\beta_j$
are closer to each other than to the Dirichlet operator $\AD$; see also \cite{BLL11} and
the paper \cite{G11-2} by G.~Grubb where the estimate \eqref{est3} was shown to be sharp
under some smoothness conditions on the functions $\beta_1$ and $\beta_2$.
One of the main objectives of this note is to prove a counterpart of \eqref{est2} for
higher powers of resolvents of $A_{[\beta_1]}$ and $A_{[\beta_2]}$.  For that we apply
abstract boundary triple techniques from extension theory of symmetric operators
and a variant of Krein's formula which provides a convenient factorization of the resolvent
difference of two self-adjoint realizations of $\cL$; cf.\ \cite{BL07,BL12,BLL11} and
\cite{BGW09,DM91,GM08-1,GM11,G68,G12,M10,P08,P12} for related approaches.
Our tools allow us to consider general non-local Robin type realizations of $\cL$ of the form
\begin{equation}\label{abop}
\begin{split}
  A_{[B]}f &= \cL f,\\ \dom A_{[B]}&=\Bigl\{f\in H^{3/2}(\Omega): \cL f\in L^2(\Omega),\,\,
  B f\vert_{\partial\Omega}=\tfrac{\partial f}{\partial\nu}\bigl|_{\partial\Omega}\Bigr\},
\end{split}
\end{equation}
where $B$ is an arbitrary bounded self-adjoint operator in $L^2(\partial\Omega)$
and $H^{3/2}(\Omega)$ denotes the $L^2$-based Sobolev space of order $3/2$.
In the special case where $B$ is the multiplication operator with a bounded
real-valued function $\beta$ on $\partial\Omega$ the differential operator
in \eqref{abop} coincides with the usual corresponding Robin realization $A_{[\beta]}$
of $\cL$ in $L^2(\Omega)$. It is proved in Theorem~\ref{thm2} that for
two self-adjoint realizations $A_{[B_1]}$ and $A_{[B_2]}$ as in \eqref{abop}
the difference of the $m$th powers of the resolvents satisfies
\begin{equation*}\label{est4}
  (A_{[B_1]}-\lambda)^{-m}-(A_{[B_2]}-\lambda)^{-m}\in\sS_{\frac{n-1}{2m+1},\infty},\qquad \lambda\in\rho(A_{[B_1]})\cap\rho(A_{[B_2]}),
\end{equation*}
and if, in addition, $B_1-B_2$ belongs to some weak Schatten--von Neumann ideal, the
estimate improves accordingly.  Moreover, for $m>\tfrac{n}{2}-1$ the resolvent
difference is a trace class operator and for the trace we obtain
\begin{equation}\label{traceform}
\begin{aligned}
  & \tr\bigl((A_{[B_1]}-\lambda)^{-m}-(A_{[B_2]}-\lambda)^{-m}\bigr) \\[0.5ex]
  &= \frac{1}{(m-1)!}\tr\biggl[\frac{d^{m-1}}{d\lambda^{m-1}}
  \biggl(\!\bigl(I-B_1 M(\lambda)\bigr)^{-1}(B_1-B_2)\bigl(I- M(\lambda)B_2\bigr)^{-1}
  M'(\lambda)\!\biggr)\!\biggr].
\end{aligned}
\end{equation}
As in \eqref{carron} and \eqref{carron2} the right-hand side in \eqref{traceform}
consists of the trace of derivatives of Robin-to-Neumann and Neumann-to-Dirichlet maps
on the boundary $\partial\Omega$, so that \eqref{traceform} can be viewed as a reduction
of the trace in $L^2(\Omega)$ to the boundary space $L^2(\partial\Omega)$.

The paper is organized as follows. We first recall some necessary facts about singular values
and (weak) Schatten--von Neumann ideals in Section~\ref{ssec:Sp}. In Section~\ref{sec:qbt}
the abstract concept of quasi boundary triples, $\gamma$-fields and Weyl functions
from \cite{BL07} is briefly recalled.  Furthermore, we prove some preliminary results
on the derivatives of the $\gamma$-field and Weyl function, and we provide some Krein-type formulae
for the resolvent differences of self-adjoint extensions of a symmetric operator.
Section~\ref{sec:main} contains our main results on singular value estimates and traces of resolvent power differences
of Dirichlet, Neumann and non-local Robin realizations of $\cL$.
In Section~\ref{elliptic.prelim} the elliptic differential expression is defined and a
family of self-adjoint Robin realizations is parameterized with the help of
a quasi boundary triple. A detailed analysis of the smoothing properties of the derivatives
of the corresponding $\gamma$-field and Weyl function together with Krein-type
resolvent formulae and embeddings of Sobolev spaces then leads to the estimates and
trace formulae in Theorems~\ref{thm1}, \ref{thm2} and \ref{thm3}.

\section{Schatten--von Neumann ideals and quasi boundary triples}


This section starts with preliminary facts on singular values and (weak) Schatten--von Neumann ideals.
Furthermore, we review the concepts of quasi boundary triples, associated $\gamma$-fields and Weyl functions,
which are convenient abstract tools for the parameterization and spectral analysis of self-adjoint
realizations of elliptic differential expressions.

\subsection{Singular values and Schatten--von Neumann ideals}
\label{ssec:Sp}

Let $\cH$ and $\cK$ be Hilbert spaces.  We denote by $\cB(\cH,\cK)$ the space of
bounded operators from $\cH$ to $\cK$ and by $\sS_\infty(\cH,\cK)$ the space of
compact operators.  Moreover, we set $\cB(\cH)\defeq\cB(\cH,\cH)$ and
$\sS_\infty(\cH)\defeq\sS_\infty(\cH,\cH)$.

The \emph{singular values} (or \emph{$s$-numbers}) $s_k(K)$,
$k=1,2,\dots$, of a compact operator $K\in\sS_\infty(\cH,\cK)$
are defined as the eigenvalues of the
non-negative compact operator $(K^*K)^{1/2}\in\sS_\infty(\cH)$, which are
enumerated in non-increasing  order and with multiplicities taken into account. Note
that the singular values of $K$ and $K^*$ coincide:
$s_k(K)=s_k(K^*)$ for $k=1,2,\dots$; see, e.g.\ \cite[II.\S2.2]{GK69}.
Recall that, for $p>0$, the \emph{Schatten--von Neumann ideals} $\sS_p(\cH,\cK)$ and
\emph{weak Schatten--von Neumann ideals} $\sS_{p,\infty}(\cH,\cK)$ are defined by
\begin{equation}\label{def_Sp}
\begin{aligned}
  \sS_p(\cH,\cK) &\defeq \biggl\{K\in\sS_\infty(\cH,\cK)\colon\sum_{k=1}^\infty
  \bigl(s_k(K)\bigr)^p < \infty\biggr\}, \\
  \sS_{p,\infty}(\cH,\cK)& \defeq \Bigl\{K\in\sS_\infty(\cH,\cK)\colon s_k(K)
  = \rmO\bigl(k^{-1/p}\bigr),\,k\to\infty\Bigr\}.
\end{aligned}
\end{equation}
If no confusion can arise, the spaces $\cH$ and $\cK$ are suppressed and
we write $\sS_p$ and $\sS_{p,\infty}$. For $0<p' < p$  the inclusions
\begin{equation}\label{Sp_con_Spinf}
  \sS_{p} \subset \sS_{p,\infty} \quad\text{and}\quad \sS_{p',\infty}\subset\sS_p
\end{equation}
hold; for $s,t>0$ one has
\begin{equation}\label{prod_Sp}
  \sS_{\frac{1}{s}}\cdot\sS_{\frac{1}{t}}=
  \sS_{\frac{1}{s+t}}\quad\text{and}\quad\sS_{\frac{1}{s},\infty}\cdot\sS_{\frac{1}{t},\infty}=
  \sS_{\frac{1}{s+t},\infty},
\end{equation}
where a product of operator ideals is defined as the set of all products.
We refer the reader to \cite[III.\S7 and III.\S14]{GK69} and \cite[Chapter~2]{S05}
for a detailed study of the classes $\sS_p$ and $\sS_{p,\infty}$; see also \cite[Lemma~2.3]{BLL11}.
The ideal of \emph{nuclear} or \emph{trace class operators} $\sS_1$
plays an important role later on.
The trace of a compact operator $K\in\sS_1(\cH)$ is defined as
\begin{equation*}\label{deftrace}
  \tr K \defeq \sum_{k=1}^\infty \lambda_k(K),
\end{equation*}
where $\lambda_k(K)$ are the eigenvalues of $K$ and the sum converges absolutely.
It is well known (see, e.g.\ \cite[\S III.8]{GK69}) that, for $K_1,K_2\in\sS_1(\cH)$,
\begin{equation}\label{trace1}
  \tr(K_1 + K_2) = \tr K_1 + \tr K_2
\end{equation}
holds. Moreover, if $K_1\in\cB(\cH,\cK)$ and $K_2\in\cB(\cK,\cH)$ are such that
$K_1K_2\in\sS_1(\cK)$ and $K_2K_1\in\sS_1(\cH)$, then
\begin{equation}\label{trace2}
  \tr(K_1K_2) = \tr( K_2K_1).
\end{equation}

The next useful lemma can be found in, e.g.\ \cite{BLLLP10,BLL11} and is based
on the asymptotics of the eigenvalues of the Laplace--Beltrami operator.
For a smooth compact manifold $\Sigma$ we denote the usual $L^2$-based Sobolev
spaces by $H^r(\Sigma)$, $r\geq 0$.

\begin{lem}\label{le.s_emb}
Let $\Sigma$ be an $(n-1)$-dimensional  compact $C^\infty$-manifold without boundary, let
$\cK$ be a Hilbert space and $K\in\cB(\cK,H^{r_1}(\Sigma))$
with $\ran K \subset H^{r_2}(\Sigma)$ where $r_2>r_1\geq 0$.
Then $K$ is compact and its singular values $s_k(K)$ satisfy
\[
  s_k(K) = \rmO\bigl(k^{-\frac{r_2-r_1}{n-1}}\bigr), \quad k\to\infty,
\]
i.e.\ $K\in\sS_{\frac{n-1}{r_2-r_1},\infty}\bigl(\cK,H^{r_1}(\Sigma)\bigr)$ and
hence $K\in\sS_p\bigl(\cK,H^{r_1}(\Sigma)\bigr)$ for every $p>\frac{n-1}{r_2-r_1}$.
\end{lem}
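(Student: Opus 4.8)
The plan is to reduce everything to the classical Weyl asymptotics for elliptic operators on $\Sigma$, using $(I-\Delta_\Sigma)^{s}$ as a scale of comparison operators, where $\Delta_\Sigma$ denotes the Laplace--Beltrami operator on $\Sigma$. First I would recall the key fact: the nonnegative self-adjoint operator $\Lambda\defeq(I-\Delta_\Sigma)^{1/2}$ on $L^2(\Sigma)$ has purely discrete spectrum, its eigenvalues $\mu_k$ grow like $\mu_k\asymp k^{1/(n-1)}$ by Weyl's law (since $\dim\Sigma=n-1$), and for every $s\ge 0$ the domain of $\Lambda^{s}$ coincides with $H^{s}(\Sigma)$ with equivalent norms, while $\Lambda^{-s}\in\sS_{\frac{n-1}{s},\infty}(L^2(\Sigma))$ with $s_k(\Lambda^{-s})=\mu_k^{-s}\asymp k^{-s/(n-1)}$. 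This last statement is precisely the seed estimate; everything else is bookkeeping with the ideal property of $\sS_{p,\infty}$.

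Next I would argue as follows. Set $r\defeq r_2$ and $s\defeq r_2-r_1$. The hypothesis $\ran K\subset H^{r_2}(\Sigma)$ together with the closed graph theorem shows that $K$, regarded as an operator $K\colon\cK\to H^{r_2}(\Sigma)$, is bounded: indeed if $x_j\to x$ in $\cK$ and $Kx_j\to y$ in $H^{r_2}(\Sigma)$, then $Kx_j\to y$ also in $H^{r_1}(\Sigma)$ by continuity of the embedding $H^{r_2}\hookrightarrow H^{r_1}$, while $Kx_j\to Kx$ in $H^{r_1}(\Sigma)$ by boundedness of $K\colon\cK\to H^{r_1}(\Sigma)$, hence $y=Kx$, and the graph is closed. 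Now factorize
\begin{equation*}
  K = \Lambda^{-s}\,\bigl(\Lambda^{s}K\bigr),
\end{equation*}
where on the right $\Lambda^{s}K$ is viewed as a bounded operator from $\cK$ into $L^2(\Sigma)$ (bounded because $K\colon\cK\to H^{r_2}(\Sigma)=\dom\Lambda^{r_2}\subset\dom\Lambda^{s}$ is bounded and $\Lambda^{s}\colon H^{s}(\Sigma)\to L^2(\Sigma)$ is bounded, or more carefully $\Lambda^{s}\colon H^{r_2}(\Sigma)\to H^{r_2-s}(\Sigma)=H^{r_1}(\Sigma)$ is bounded, giving boundedness into $H^{r_1}(\Sigma)\hookrightarrow L^2(\Sigma)$ when $r_1\ge 0$), and $\Lambda^{-s}$ is viewed as a bounded operator from $L^2(\Sigma)$ into $H^{r_1}(\Sigma)$, since $\Lambda^{-s}$ maps $L^2(\Sigma)=H^0(\Sigma)$ onto $H^{s}(\Sigma)$ and $H^{s}(\Sigma)\hookrightarrow H^{r_1}(\Sigma)$ would require $s\ge r_1$, which need not hold; to avoid that, instead write $\Lambda^{-s}\colon L^2(\Sigma)\to H^{r_1}(\Sigma)$ as the composition $H^0\xrightarrow{\Lambda^{-r_2}}H^{r_2}\xrightarrow{\Lambda^{r_1}}H^{r_1}$, where $\Lambda^{r_1}$ is bounded $H^{r_2}\to H^{r_2-r_1}=H^{s}\supset H^{r_1}$ — cleanest is simply: $\Lambda^{-s}\colon L^2(\Sigma)\to L^2(\Sigma)$ has $s_k=\mu_k^{-s}$, and composing with the bounded identification $L^2(\Sigma)\supset H^{r_1}(\Sigma)$ does not increase singular values. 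In any case, one obtains $s_k(\Lambda^{-s})=\mu_k^{-s}=\rmO(k^{-s/(n-1)})$, so $\Lambda^{-s}\in\sS_{\frac{n-1}{s},\infty}$, and since singular values satisfy $s_k(AB)\le\|A\|\,s_k(B)$ and $s_k(BA)\le\|A\|\,s_k(B)$, the product $K=\Lambda^{-s}(\Lambda^{s}K)$ inherits $s_k(K)\le\|\Lambda^{s}K\|\,s_k(\Lambda^{-s})=\rmO(k^{-(r_2-r_1)/(n-1)})$. Compactness of $K$ follows from compactness of $\Lambda^{-s}$. Finally the inclusion $K\in\sS_p$ for $p>\frac{n-1}{r_2-r_1}$ is exactly the first inclusion in \eqref{Sp_con_Spinf}.

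The main obstacle is purely technical and concerns the mapping properties of the powers $\Lambda^{s}$ between the Sobolev scale — namely establishing $\dom\Lambda^{s}=H^{s}(\Sigma)$ with equivalent norms for all real $s$ and the boundedness of $\Lambda^{s}\colon H^{t}(\Sigma)\to H^{t-s}(\Sigma)$. On a smooth compact manifold without boundary this is standard pseudodifferential calculus: $(I-\Delta_\Sigma)^{s/2}$ is a classical elliptic $\Psi$DO of order $s$, hence bounded $H^{t}\to H^{t-s}$ with elliptic parametrix giving the norm equivalence. I would cite this (e.g.\ Shubin or Taylor) rather than reprove it. The only other point requiring a line of care is justifying the factorization at the level of unbounded operators — that $\Lambda^{s}K$ is genuinely everywhere-defined and bounded $\cK\to L^2(\Sigma)$ — which follows once we know $\ran K\subset H^{r_2}(\Sigma)\subset H^{s}(\Sigma)=\dom\Lambda^{s}$ (using $r_2\ge s$, i.e.\ $r_1\ge 0$) together with the closed-graph boundedness of $K\colon\cK\to H^{r_2}(\Sigma)$ established above; then $\Lambda^{s}K\colon\cK\to H^{r_2-s}(\Sigma)=H^{r_1}(\Sigma)\hookrightarrow L^2(\Sigma)$ is a composition of bounded maps. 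No further surprises are expected.
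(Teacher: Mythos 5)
Your overall strategy is exactly the one the paper alludes to (the paper itself does not prove the lemma but cites \cite{BLLLP10,BLL11} and notes that the proof rests on the eigenvalue asymptotics of the Laplace--Beltrami operator): the closed-graph argument giving $K\in\cB(\cK,H^{r_2}(\Sigma))$, the factorization through powers of $\Lambda=(I-\Delta_\Sigma)^{1/2}$, Weyl's law $\mu_k\asymp k^{1/(n-1)}$, the identification $\dom\Lambda^s=H^s(\Sigma)$, and the inequality $s_k(AB)\le\|A\|\,s_k(B)$ are all the right ingredients, and the first inclusion in \eqref{Sp_con_Spinf} correctly finishes the $\sS_p$ statement.

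There is, however, a genuine gap in how you handle the target space when $r_1>0$. The lemma asserts $K\in\sS_{\frac{n-1}{r_2-r_1},\infty}\bigl(\cK,H^{r_1}(\Sigma)\bigr)$, i.e.\ the singular values must be computed with the $H^{r_1}$-norm on the target. Your final ``cleanest'' route --- estimating $\Lambda^{-s}$ on $L^2(\Sigma)$ and ``composing with the bounded identification $L^2(\Sigma)\supset H^{r_1}(\Sigma)$'' --- only controls $s_k(\iota K)$, where $\iota\colon H^{r_1}(\Sigma)\hookrightarrow L^2(\Sigma)$ is the embedding; since $s_k(\iota K)\le\|\iota\|\,s_k(K)$, this inequality runs in the wrong direction, and a bound for $K$ viewed into the weaker $L^2$-norm does not yield the asserted bound into $H^{r_1}(\Sigma)$ (it suffices only for $r_1=0$, which happens to cover the applications in this paper but not the lemma as stated). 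The intermediate attempt ``$\Lambda^{r_1}\colon H^{r_2}\to H^{r_2-r_1}=H^{s}\supset H^{r_1}$'' is also incorrect for $r_1>s$: the inclusion goes the other way, and in any case it does not produce a bounded map into $H^{r_1}(\Sigma)$. The repair is short and stays entirely within your framework: either keep the factorization $K=\Lambda^{-s}(\Lambda^{s}K)$ with $\Lambda^{s}K\in\cB(\cK,H^{r_1}(\Sigma))$ and compute the singular values of $\Lambda^{-s}$ as an operator from $H^{r_1}(\Sigma)$ into $H^{r_1}(\Sigma)$ --- since $\Lambda^{r_1}\colon H^{r_1}(\Sigma)\to L^2(\Sigma)$ is an isomorphism (equivalent norms) commuting with $\Lambda^{-s}$, these singular values are comparable to $\mu_k^{-s}\asymp k^{-s/(n-1)}$ --- or, equivalently, observe that $s_k(K)\asymp s_k(\Lambda^{r_1}K)$ because $\Lambda^{r_1}$ is an isomorphism onto $L^2(\Sigma)$, and apply your $r_1=0$ argument to $\Lambda^{r_1}K$, whose range lies in $H^{r_2-r_1}(\Sigma)$. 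With that one correction the proof is complete.
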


\subsection{Quasi boundary triples and their Weyl functions}
\label{sec:qbt}

In this subsection we recall the definitions and some important properties of quasi
boundary triples, corresponding $\gamma$-fields and associated Weyl functions,
cf.\ \cite{BL07, BL12, BLL11} for more details.
Quasi boundary triples are particularly useful when dealing with elliptic boundary
value problems from an operator and extension theoretic point of view.

\begin{dfn}
\label{def:qbt}
Let $A$ be a closed, densely defined, symmetric operator in a Hilbert
space $(\cH, (\cdot,\cdot)_\cH)$.
A triple $\{\cG,\Gamma_0,\Gamma_1\}$ is called a \emph{quasi boundary triple}
for $A^*$ if $(\cG,(\cdot,\cdot)_\cG)$ is a Hilbert space and for some
linear operator $T\subset A^*$ with $\ov T  = A^*$ the following holds:
\begin{itemize}\setlength{\itemsep}{1.2ex}
\item [{\rm (i)}]
$\Gamma_0,\Gamma_1:\dom T\rightarrow\cG$ are linear mappings, and the
mapping $\Gamma \defeq \binom{\Gamma_0}{\Gamma_1}$ has dense range  in $\cG\times\cG$;
\item [{\rm (ii)}]
$A_0 \defeq T\upharpoonright\ker\Gamma_0$ is a self-adjoint operator in $\cH$;
\item [{\rm (iii)}]
for all $f,g\in \dom T$ the \emph{abstract Green identity} holds:
\begin{equation*}\label{green}
  (Tf,g)_{\cH}-(f,Tg)_{\cH}
  =(\Gamma_1 f,\Gamma_0 g)_{\cG}-(\Gamma_0 f,\Gamma_1 g)_{\cG}.
\end{equation*}
\end{itemize}
\end{dfn}

\medskip

\noindent
We remark that a quasi boundary triple for $A^*$ exists if and only if
the deficiency indices  of $A$ coincide.  Moreover, in the case of finite deficiency indices
a quasi boundary triple is automatically an ordinary boundary triple,
cf.\ \cite[Proposition~3.3]{BL07}. For the notion of (ordinary) boundary triples and their properties we refer to
\cite{Br76,DM91,DM95,GG91,Ko75}.
If  $\{\cG, \Gamma_0, \Gamma_1 \}$ is a quasi boundary triple for $A^*$,
then $A$ coincides with $T\upharpoonright\ker\Gamma$ and the operator
$A_1 \defeq T\upharpoonright\ker\Gamma_1$ is symmetric in $\cH$. We also
mention that a quasi boundary triple with the additional property $\ran\Gamma_0 =\cG$
is a generalized boundary triple in the sense of~\cite{DM95}; see
\cite[Corollary~3.7\,(ii)]{BL07}.

Next we recall the definition of the $\gamma$-field and the Weyl
function associated with the quasi boundary triple $\{\cG, \Gamma_0,
\Gamma_1 \}$ for $A^*$. Note that the decomposition
\[
  \dom T=\dom A_0\,\dot +\,\ker(T-\lambda)=\ker\Gamma_0\,\dot+\,\ker(T-\lambda)
\]
holds for all $\lambda\in\rho(A_0)$, so that $\Gamma_0\upharpoonright\ker(T-\lambda)$
is invertible for all $\lambda\in\rho(A_0)$. The (operator-valued) functions $\gamma$
and $M$ defined by
\begin{equation*}\label{gweyl}
  \gamma(\lambda) \defeq
  \bigl(\Gamma_0\upharpoonright\ker(T-\lambda)\bigr)^{-1}
  \quad\text{and}\quad
  M(\lambda) \defeq \Gamma_1\gamma(\lambda),\quad
  \lambda\in\rho(A_0),
\end{equation*}
are called the $\gamma$\emph{-field} and the \emph{Weyl function}
corresponding to the quasi boundary triple $\{\cG,\Gamma_0,\Gamma_1\}$.
These definitions coincide with the definitions of the $\gamma$-field and
the Weyl function in the case that $\{\cG,\Gamma_0,\Gamma_1\}$ is an
ordinary boundary triple, see~\cite{DM91}.
Note that, for each $\lambda \in \rho(A_0)$, the operator
$\gamma(\lambda)$ maps $\ran\Gamma_0\subset\cG$ into $\dom T\subset\cH$ and $M(\lambda)$ maps
$\ran\Gamma_0$ into $\ran\Gamma_1$. Furthermore, as an immediate consequence
of the definition of $M(\lambda)$, we obtain
\begin{equation*}\label{NDpropertyAbstr}
  M(\lambda) \Gamma_0 f_\lambda = \Gamma_1 f_\lambda, \qquad
  f_\lambda \in \ker (T-\lambda),\;\; \lambda \in \rho(A_0).
\end{equation*}

In the next proposition we collect some properties of the $\gamma$-field and the
Weyl function associated with the quasi boundary triple $\{\cG,\Gamma_0,\Gamma_1\}$ for $A^*$;
most statements were proved in \cite{BL07}.

\begin{prop} \label{gammaprop}
For all $\lambda,\mu\in \rho (A_0)$ the following assertions hold.
\begin{itemize}\setlength{\itemsep}{1.2ex}
\item[\rm (i)] 
The mapping $\gamma(\lambda)$ is a bounded, densely defined operator from $\cG$ into
$\cH$. The adjoint of $\gamma(\ov\lambda)$ has the representation
\[
  \gamma(\ov\lambda)^* = \Gamma_1(A_0-\lambda)^{-1} \in\cB(\cH,\cG).
\]
\item[\rm (ii)] 
The mapping $M(\lambda)$ is a densely defined (and in general unbounded) operator in $\cG$ that satisfies
$M(\lambda)\subset M(\ov\lambda)^*$ and
\begin{equation*}
  M(\lambda)h-M(\overline\mu)h=(\lambda-\overline\mu)\gamma(\mu)^*\gamma(\lambda)h
\end{equation*}
for all $h\in\cG_0$.
If\, $\ran\Gamma_0 = \cG$, then $M(\lambda)\in\cB(\cG)$ and $M(\lambda) = M(\ov\lambda)^*$.
\item[\rm (iii)] 
If $A_1=T\upharpoonright\ker\Gamma_1$ is a self-adjoint
operator in $\cH$ and $\lambda\in\rho(A_0)\cap\rho(A_1)$, then
$M(\lambda)$ maps $\ran \Gamma_0$ bijectively onto $\ran\Gamma_1$ and
\[
  M(\lambda)^{-1}\gamma(\ov\lambda)^* \in \cB(\cH,\cG).
\]
\end{itemize}
\end{prop}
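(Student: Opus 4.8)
The plan is to deduce all three parts from two ingredients: the abstract Green identity and the direct sum decomposition $\dom T=\ker\Gamma_0\,\dot+\,\ker(T-\lambda)$ valid for $\lambda\in\rho(A_0)$. Since (i) and (ii) are essentially contained in \cite{BL07}, I would recall the arguments only briefly and concentrate the effort on (iii). For (i), the decomposition shows at once that $\dom\gamma(\lambda)=\Gamma_0\bigl(\ker(T-\lambda)\bigr)=\ran\Gamma_0$, which is dense in $\cG$ because $\ran\Gamma$ is dense in $\cG\times\cG$. To obtain the adjoint formula I would fix $f_\lambda\in\ker(T-\lambda)$ and $g\in\cH$, write $g=(A_0-\ov\lambda)h$ with $h\in\dom A_0=\ker\Gamma_0$, substitute into the Green identity, and use $Tf_\lambda=\lambda f_\lambda$ together with $\Gamma_0 h=0$; this yields $(\gamma(\lambda)\Gamma_0 f_\lambda,g)_\cH=(\Gamma_0 f_\lambda,\Gamma_1(A_0-\ov\lambda)^{-1}g)_\cG$, which, using the boundedness of $\gamma(\lambda)$ from \cite{BL07} and replacing $\lambda$ by $\ov\lambda$, is exactly $\gamma(\ov\lambda)^*=\Gamma_1(A_0-\lambda)^{-1}\in\cB(\cH,\cG)$.

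For (ii), the relation $M(\lambda)\subset M(\ov\lambda)^*$ follows by inserting $f_\lambda\in\ker(T-\lambda)$ and $g_{\ov\lambda}\in\ker(T-\ov\lambda)$ into the Green identity: its left-hand side vanishes, so $(M(\lambda)\Gamma_0 f_\lambda,\Gamma_0 g_{\ov\lambda})_\cG=(\Gamma_0 f_\lambda,M(\ov\lambda)\Gamma_0 g_{\ov\lambda})_\cG$. For the resolvent-type identity I would first record the elementary relation $(A_0-\ov\mu)^{-1}f_\lambda=(\lambda-\ov\mu)^{-1}\bigl(f_\lambda-\gamma(\ov\mu)\Gamma_0 f_\lambda\bigr)$ for $f_\lambda\in\ker(T-\lambda)$, which is immediate from the decomposition of $\dom T$; applying $\Gamma_1$ and using part (i) in the form $\gamma(\mu)^*=\Gamma_1(A_0-\ov\mu)^{-1}$ turns the left-hand side into $\gamma(\mu)^*\gamma(\lambda)h$ and the right-hand side into $(\lambda-\ov\mu)^{-1}\bigl(M(\lambda)h-M(\ov\mu)h\bigr)$ with $h=\Gamma_0 f_\lambda$ running through $\cG_0=\ran\Gamma_0$. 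If $\ran\Gamma_0=\cG$, then $M(\lambda)$ is everywhere defined, so the inclusion $M(\lambda)\subset M(\ov\lambda)^*$ becomes an equality; hence $M(\lambda)$ is closed and, being defined on all of $\cG$, bounded.

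For (iii), I would first observe that the self-adjointness of $A_1$ gives the companion decomposition $\dom T=\ker\Gamma_1\,\dot+\,\ker(T-\lambda)$ for $\lambda\in\rho(A_0)\cap\rho(A_1)$. Injectivity of $M(\lambda)$ on $\ran\Gamma_0$ is then clear, since $M(\lambda)\Gamma_0 f_\lambda=\Gamma_1 f_\lambda=0$ forces $f_\lambda\in\ker(A_1-\lambda)=\{0\}$; surjectivity onto $\ran\Gamma_1$ follows by splitting an arbitrary $g\in\dom T$ as $g=g_1+g_\lambda$ along the companion decomposition, so that $\Gamma_1 g=\Gamma_1 g_\lambda=M(\lambda)\Gamma_0 g_\lambda$. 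The remaining, and genuinely new, point $M(\lambda)^{-1}\gamma(\ov\lambda)^*\in\cB(\cH,\cG)$ I would treat by the following device: $\{\cG,-\Gamma_1,\Gamma_0\}$ is again a quasi boundary triple for $A^*$ — the Green identity merely changes sign on both sides, the range of $\binom{-\Gamma_1}{\Gamma_0}$ is dense because that of $\binom{\Gamma_0}{\Gamma_1}$ is, and $T\upharpoonright\ker\Gamma_1=A_1$ is self-adjoint by the hypothesis of (iii) — so part (i) applied to this triple yields $\Gamma_0(A_1-\lambda)^{-1}\in\cB(\cH,\cG)$. It then remains to verify $M(\lambda)^{-1}\gamma(\ov\lambda)^*=-\Gamma_0(A_1-\lambda)^{-1}$: for $g\in\cH$ set $h=(A_0-\lambda)^{-1}g$ and split $h=h_1+h_\lambda$ with $h_1\in\ker\Gamma_1$ and $h_\lambda\in\ker(T-\lambda)$; then $\gamma(\ov\lambda)^*g=\Gamma_1 h=\Gamma_1 h_\lambda=M(\lambda)\Gamma_0 h_\lambda$, hence $M(\lambda)^{-1}\gamma(\ov\lambda)^*g=\Gamma_0 h_\lambda=-\Gamma_0 h_1$ because $\Gamma_0 h=0$, and $(A_1-\lambda)h_1=(T-\lambda)h=g$ identifies $h_1=(A_1-\lambda)^{-1}g$.

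I expect the main difficulty to lie in the boundedness assertions rather than in the algebraic identities. Once the boundedness of $\gamma(\lambda)$ is available — this is the place where the closedness of $A^*=\ov T$ really enters, and I would import it from \cite{BL07} — the adjoint formula in (i), the whole of (ii), and the bijectivity part of (iii) reduce to bookkeeping with the Green identity, while the boundedness of $M(\lambda)^{-1}\gamma(\ov\lambda)^*$ in (iii) is, via the transposed triple, brought back to the already-established part (i), so that no further functional-analytic input is needed.
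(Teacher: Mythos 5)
Your proposal is correct, and for the one part that the paper actually proves in detail -- the boundedness of $M(\lambda)^{-1}\gamma(\ov\lambda)^*$ -- you use the same key device, namely passing to the swapped triple (your $\{\cG,-\Gamma_1,\Gamma_0\}$, the paper's $\{\cG,\Gamma_1,-\Gamma_0\}$), whose existence as a quasi boundary triple is exactly where the self-adjointness of $A_1$ enters. The differences are in execution. For (i), (ii) and the bijectivity statement in (iii) the paper simply cites \cite{BL07}, whereas you reprove them from the Green identity and the decompositions $\dom T=\ker\Gamma_0\,\dot+\,\ker(T-\lambda)$ and $\dom T=\ker\Gamma_1\,\dot+\,\ker(T-\lambda)$, importing from \cite{BL07} only the boundedness of $\gamma(\lambda)$; your computations (the adjoint formula via $g=(A_0-\ov\lambda)h$, the identity $(A_0-\ov\mu)^{-1}f_\lambda=(\lambda-\ov\mu)^{-1}(f_\lambda-\gamma(\ov\mu)\Gamma_0 f_\lambda)$, the closed-graph argument when $\ran\Gamma_0=\cG$) are all sound. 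For the final boundedness claim the paper identifies the swapped triple's $\gamma$-field as $\widetilde\gamma(\lambda)=\gamma(\lambda)M(\lambda)^{-1}$ and deduces boundedness of $M(\lambda)^{-1}\gamma(\ov\lambda)^*$ from the boundedness of $\widetilde\gamma$ together with $M(\lambda)\subset M(\ov\lambda)^*$ via an adjoint argument; you instead apply part (i) to the swapped triple to get $\Gamma_0(A_1-\lambda)^{-1}\in\cB(\cH,\cG)$ and then verify the explicit identity $M(\lambda)^{-1}\gamma(\ov\lambda)^*=-\Gamma_0(A_1-\lambda)^{-1}$ by splitting $(A_0-\lambda)^{-1}g$ along $\ker\Gamma_1\,\dot+\,\ker(T-\lambda)$. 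Your route buys a concrete resolvent-type formula (essentially the statement that the swapped triple's $\gamma$-field is $\gamma(\lambda)M(\lambda)^{-1}$, read through its adjoint) and avoids the adjoint-inclusion bookkeeping, at the cost of being longer; the paper's is shorter by delegation. One harmless slip in wording: under $(\Gamma_0,\Gamma_1)\mapsto(-\Gamma_1,\Gamma_0)$ the right-hand side of the Green identity is unchanged, not ``changed in sign on both sides''; the conclusion you draw from it is nevertheless the correct one.
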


\begin{proof}
Items (i), (ii) and the first part of (iii) follow from
\cite[Proposition~2.6\,(i), (ii), (iii), (v) and Corollary~3.7\,(ii)]{BL07}.
For the second part of (iii) note that $\{\cG,\Gamma_1,-\Gamma_0\}$ is also a quasi
boundary triple if $A_1$ is self-adjoint.  It is easy to see that in this case the
corresponding $\gamma$-field is $\widetilde\gamma(\lambda)=\gamma(\lambda)M(\lambda)^{-1}$.
Since $\ran(\gamma(\ov\lambda)^*)\subset\ran\Gamma_1$ by item (ii), the
operator $M(\lambda)^{-1}\gamma(\ov\lambda)^*$ is defined on $\cH$.  Now the boundedness
of $\widetilde\gamma(\lambda)$, which follows from (i), and the relation
$M(\lambda)\subset M(\ov\lambda)^*$ imply that $M(\lambda)^{-1}\gamma(\ov\lambda)^*$
is bounded.
\end{proof}

In the following we shall often use product rules for holomorphic operator-valued
functions.  Let $\cH_i$, $i=1,\dots,4$, be Hilbert spaces, $U$ a domain in $\CC$
and let $A\colon U\to\cB(\cH_3,\cH_4)$, $B\colon U\to\cB(\cH_2,\cH_3)$,
$C\colon U\to\cB(\cH_1,\cH_2)$ be holomorphic operator-valued functions.  Then
\begin{align}
  \label{rule1}
  \frac{d^m}{d\lambda^m}\bigl(A(\lambda)B(\lambda)\bigr)
  &= \sum_{\substack{p+q = m \\[0.2ex] p,q \ge 0}}
  \binom{m}{p}A^{(p)}(\lambda)B^{(q)}(\lambda), \\[1ex]
  \label{rule2}
  \frac{d^m}{d\lambda^m}\bigl(A(\lambda)B(\lambda)C(\lambda)\bigr)
  &= \sum_{\substack{p+q+r = m \\[0.2ex] p,q,r \ge 0}}
  \frac{m!}{p!\,q!\,r!}A^{(p)}(\lambda)B^{(q)}(\lambda)C^{(r)}(\lambda)
\end{align}
for $\lambda\in U$.
If $A(\lambda)^{-1}$ is invertible for every $\lambda\in U$, then
relation \eqref{rule1} implies the following formula for the derivative
of the inverse,
\begin{equation}\label{rule3}
  \frac{d}{d\lambda}\bigl(A(\lambda)^{-1}\bigr) = -A(\lambda)^{-1}A'(\lambda)A(\lambda)^{-1}.
\end{equation}
In the next lemma we consider higher derivatives of the $\gamma$-field and the Weyl function
associated with a quasi boundary triple $\{\cG,\Gamma_0,\Gamma_1\}$.

\begin{lem}\label{le:der_g_M}
For all $\lambda\in\rho(A_0)$ and all $k\in\dN$ the following holds.
\begin{myenum}
\item
$\ds\frac{d^k}{d\lambda^k}\gamma(\ov\lambda)^* = k!\,\gamma(\ov\lambda)^*(A_0-\lambda)^{-k}$;
\item
$\ds\frac{d^k}{d\lambda^k}\ov{\gamma(\lambda)} = k!(A_0-\lambda)^{-k}\ov{\gamma(\lambda)}$;
\item
$\ds\ov{\frac{d^k}{d\lambda^k}M(\lambda)} = \frac{d^{k-1}}{d\lambda^{k-1}}\bigl(\gamma(\ov\lambda)^*\ov{\gamma(\lambda)}\,\bigr) = k!\,\gamma(\ov\lambda)^*(A_0-\lambda)^{-(k-1)}\ov{\gamma(\lambda)}$.
\end{myenum}
\end{lem}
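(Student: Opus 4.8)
\textbf{Plan for the proof of Lemma~\ref{le:der_g_M}.}
The three statements are all resolvent-type identities, and the natural strategy is induction on $k$, using the base cases $k=1$ together with the fundamental resolvent relations already collected in Proposition~\ref{gammaprop}. For item (i), I would first establish the case $k=1$: from Proposition~\ref{gammaprop}\,(i) we have $\gamma(\ov\lambda)^* = \Gamma_1(A_0-\lambda)^{-1}$, and differentiating once (the right-hand side is holomorphic in $\lambda$ on $\rho(A_0)$, so differentiation is legitimate and $\Gamma_1$, being applied to the image of $(A_0-\lambda)^{-1}$, passes through) gives $\frac{d}{d\lambda}\gamma(\ov\lambda)^* = \Gamma_1(A_0-\lambda)^{-2} = \gamma(\ov\lambda)^*(A_0-\lambda)^{-1}$. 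For the inductive step, assuming $\frac{d^k}{d\lambda^k}\gamma(\ov\lambda)^* = k!\,\gamma(\ov\lambda)^*(A_0-\lambda)^{-k}$, I differentiate once more, apply the product rule \eqref{rule1} to the product $\gamma(\ov\lambda)^*\cdot(A_0-\lambda)^{-k}$, use $\frac{d}{d\lambda}(A_0-\lambda)^{-k} = k(A_0-\lambda)^{-(k+1)}$ and the $k=1$ case again, and collect: $k!\bigl(\gamma(\ov\lambda)^*(A_0-\lambda)^{-1}(A_0-\lambda)^{-k} + \gamma(\ov\lambda)^*\cdot k(A_0-\lambda)^{-(k+1)}\bigr) = (k+1)!\,\gamma(\ov\lambda)^*(A_0-\lambda)^{-(k+1)}$.

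Item (ii) follows from item (i) by taking adjoints. Since $\gamma(\ov\lambda)^*\in\cB(\cH,\cG)$ is bounded and everywhere defined, and $(A_0-\lambda)^{-k}\in\cB(\cH)$, the product in (i) is a bounded everywhere-defined operator; its adjoint is $(A_0-\ov\lambda)^{-k}\ov{\gamma(\lambda)}$ after replacing $\lambda$ by $\ov\lambda$ and using that $(A_0-\lambda)^{-k}{}^* = (A_0-\ov\lambda)^{-k}$ (self-adjointness of $A_0$) and that $\bigl(\gamma(\ov\lambda)^*\bigr)^* = \ov{\gamma(\lambda)}$, the closure of $\gamma(\lambda)$. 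One has to be mildly careful that adjoining commutes with $\lambda$-differentiation here, but this is immediate because the map is holomorphic with values in the Banach space $\cB(\cH,\cG)$ and $K\mapsto K^*$ is a (conjugate-linear, but locally equal to a linear map after the substitution $\lambda\mapsto\ov\lambda$) isometry onto $\cB(\cG,\cH)$.

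For item (iii) I would start from the identity in Proposition~\ref{gammaprop}\,(ii), $M(\lambda)h - M(\ov\mu)h = (\lambda-\ov\mu)\gamma(\mu)^*\gamma(\lambda)h$. Fixing $\mu$ and differentiating in $\lambda$ gives $\frac{d}{d\lambda}M(\lambda)h = \gamma(\mu)^*\gamma(\lambda)h + (\lambda-\ov\mu)\gamma(\mu)^*\gamma'(\lambda)h$; letting $\mu\to\ov\lambda$ (equivalently setting $\ov\mu = \lambda$) kills the second term and yields $\frac{d}{d\lambda}M(\lambda) \subset \gamma(\ov\lambda)^*\gamma(\lambda)$, hence $\ov{M'(\lambda)} = \gamma(\ov\lambda)^*\ov{\gamma(\lambda)}$ since the right-hand side is bounded and everywhere defined and extends $M'(\lambda)$. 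This is the $k=1$ case. For general $k$, I differentiate $\ov{M'(\lambda)} = \gamma(\ov\lambda)^*\ov{\gamma(\lambda)}$ exactly $k-1$ more times, apply the product rule \eqref{rule1}, and substitute items (i) and (ii):
\[
  \frac{d^{k-1}}{d\lambda^{k-1}}\bigl(\gamma(\ov\lambda)^*\ov{\gamma(\lambda)}\bigr)
  = \sum_{p+q=k-1}\binom{k-1}{p}p!\,\gamma(\ov\lambda)^*(A_0-\lambda)^{-p}\cdot q!\,(A_0-\lambda)^{-q}\ov{\gamma(\lambda)}.
\]
Each summand equals $(k-1)!\,\gamma(\ov\lambda)^*(A_0-\lambda)^{-(k-1)}\ov{\gamma(\lambda)}$, and there are $k$ summands, so the total is $k!\,\gamma(\ov\lambda)^*(A_0-\lambda)^{-(k-1)}\ov{\gamma(\lambda)}$, as claimed. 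The only genuinely delicate point in the whole lemma is the passage between the (possibly unbounded) operators $M(\lambda)$, $\gamma(\lambda)$ and their closures/adjoints: one must justify that the stated identities, initially valid on the dense domain $\ran\Gamma_0$ (or $\cG_0$) where $M'(\lambda)$ acts, extend to the closures because in each case the right-hand side is a bona fide bounded operator on all of $\cH$ or $\cG$. Once that bookkeeping is in place, the computation is purely the product rule plus the resolvent identity $\frac{d}{d\lambda}(A_0-\lambda)^{-1} = (A_0-\lambda)^{-2}$.
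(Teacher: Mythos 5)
Your proposal is correct and follows essentially the same route as the paper: item (i) by induction from $\gamma(\ov\lambda)^*=\Gamma_1(A_0-\lambda)^{-1}$ and the resolvent identity, (ii) by taking adjoints, and (iii) by identifying $M'(\lambda)$ on $\ran\Gamma_0$ via Proposition~\ref{gammaprop}\,(ii), passing to closures, and then applying the Leibniz rule \eqref{rule1} together with (i) and (ii). The only differences are cosmetic (the paper computes the $k=1$ cases via explicit difference quotients rather than differentiating the two-parameter identity and setting $\mu=\ov\lambda$), and your closing remark about extending identities from the dense domain to closures is exactly the bookkeeping the paper also relies on.
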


\begin{proof}
(i) We prove the statement by induction.  For $k=1$ we have
\begin{align*}
  \frac{d}{d\lambda}\gamma(\ov\lambda)^*
  &= \lim_{\mu\to\lambda}\frac{1}{\mu-\lambda}\bigl(\gamma(\ov\mu)^*-\gamma(\ov\lambda)^*\bigr) \\[1ex]
  &= \lim_{\mu\to\lambda}\frac{1}{\mu-\lambda}\Gamma_1\bigl((A_0-\mu)^{-1}-(A_0-\lambda)^{-1}\bigr) \\[1ex]
  &= \lim_{\mu\to\lambda}\Gamma_1(A_0-\mu)^{-1}(A_0-\lambda)^{-1}
  = \lim_{\mu\to\lambda}\gamma(\ov\mu)^*(A_0-\lambda)^{-1} \\[1ex]
  &= \gamma(\ov\lambda)^*(A_0-\lambda)^{-1},
\end{align*}
where we used Proposition~\ref{gammaprop}\,(i).
If we assume that the statement is true for $k\in\dN$, then
\begin{align*}
  \frac{d^{k+1}}{d\lambda^{k+1}}\gamma(\ov\lambda)^*
  &= k!\frac{d}{d\lambda}\Bigl(\gamma(\ov\lambda)^*(A_0-\lambda)^{-k}\Bigr) \\[1ex]
  &= k!\biggl[\Bigl(\frac{d}{d\lambda}\gamma(\ov\lambda)^*\Bigr)(A_0-\lambda)^{-k}
    + \gamma(\ov\lambda)^*\frac{d}{d\lambda}(A_0-\lambda)^{-k}\biggr] \\[1ex]
  &= k!\biggl[\gamma(\ov\lambda)^*(A_0-\lambda)^{-1}(A_0-\lambda)^{-k}
    + \gamma(\ov\lambda)^* k(A_0-\lambda)^{-k-1}\biggr] \\[1ex]
  &= k!(1+k)\gamma(\ov\lambda)^*(A_0-\lambda)^{-(k+1)},
\end{align*}
which proves the statement in (i) by induction.

(ii) This assertion is obtained from (i) by taking adjoints.

(iii) It follows from Proposition~\ref{gammaprop}\,(ii) that,
for $f\in\dom M(\lambda) = \ran \Gamma_0$,
\[
  \frac{d}{d\lambda}M(\lambda)f
  = \lim_{\mu\to\lambda}\frac{1}{\mu-\lambda}\bigl(M(\mu)-M(\lambda)\bigr)f
  = \lim_{\mu\to\lambda}\gamma(\ov\lambda)^*\gamma(\mu)f
  = \gamma(\ov\lambda)^*\gamma(\lambda)f.
\]
By taking closures we obtain the claim for $k=1$.
For $k\ge2$ we use \eqref{rule1} to get
\begin{align*}
  & \ov{\frac{d^k}{d\lambda^k}M(\lambda)}
  = \frac{d^{k-1}}{d\lambda^{k-1}}\Bigl(\gamma(\ov\lambda)^*\ov{\gamma(\lambda)}\,\Bigr)
  = \sum_{\substack{p+q = k-1 \\[0.2ex] p,q \ge 0}} \binom{k-1}{p}
    \biggl(\frac{d^{p}}{d\lambda^{p}}\gamma(\ov\lambda)^*\biggr)
    \frac{d^q}{d\lambda^q}\ov{\gamma(\lambda)} \\[1ex]
  &= \sum_{\substack{p+q = k-1 \\[0.2ex] p,q \ge 0}}
    \binom{k-1}{p}p!\,\gamma(\ov\lambda)^*(A_0-\lambda)^{-p}
    q!\,(A_0-\lambda)^{-q}\,\ov{\gamma(\lambda)} \\[1ex]
  &= \sum_{\substack{p+q = k-1 \\[0.2ex] p,q \ge 0}}
    (k-1)!\gamma(\ov\lambda)^*(A_0-\lambda)^{-(k-1)}\ov{\gamma(\lambda)}
  = k!\gamma(\ov\lambda)^*(A_0-\lambda)^{-(k-1)}\ov{\gamma(\lambda)},
\end{align*}
which finishes the proof.
\end{proof}

The following theorem provides a Krein-type formula for the resolvent difference
of $A_0$ and $A_1$ if $A_1$ is self-adjoint.  The theorem follows from
\cite[Corollary~3.11\,(i)]{BL07} with $\Theta=0$.

\begin{thm}
\label{thm:01}
Let $A$ be a closed, densely defined, symmetric operator in a Hilbert space $\cH$ and let
$\{\cG,\Gamma_0,\Gamma_1\}$ be a quasi boundary triple for $A^*$
with $A_0=T\upharpoonright\ker\Gamma_0$, $\gamma$-field $\gamma$ and Weyl function $M$.
Assume that $A_1=T\upharpoonright\ker\Gamma_1$ is self-adjoint in $\cH$. Then
\[
  (A_0-\lambda)^{-1} - (A_1-\lambda)^{-1} = \gamma(\lambda)M(\lambda)^{-1}\gamma(\ov\lambda)^*
\]
holds for $\lambda\in\rho(A_1)\cap\rho(A_0)$.
\end{thm}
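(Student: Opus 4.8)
The plan is to derive the formula from Krein's resolvent formula by exploiting the Green identity for the quasi boundary triple together with the factorization properties of the $\gamma$-field. First I would observe that since $A_1 = T\upharpoonright\ker\Gamma_1$ is self-adjoint, the triple $\{\cG,\Gamma_1,-\Gamma_0\}$ is again a quasi boundary triple for $A^*$, this time with $A_1$ playing the role of the ``$A_0$-extension'' (i.e.\ the kernel of the first boundary map). As noted in the proof of Proposition~\ref{gammaprop}\,(iii), its $\gamma$-field is $\widetilde\gamma(\lambda) = \gamma(\lambda)M(\lambda)^{-1}$, and a short computation identifies its Weyl function as $-M(\lambda)^{-1}$. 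This symmetry will let me transfer known formulas between the two extensions.

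The core of the argument is the identity
\begin{equation*}
  (A_0-\lambda)^{-1} - (A_1-\lambda)^{-1} = \gamma(\lambda)M(\lambda)^{-1}\gamma(\ov\lambda)^*,
\end{equation*}
which I would establish directly. The natural route is: for $h\in\cH$ and $\lambda\in\rho(A_0)\cap\rho(A_1)$, set $f = (A_0-\lambda)^{-1}h$ and $g = (A_1-\lambda)^{-1}h$, so that $f-g \in \ker(T-\lambda)$. Since $\Gamma_0 g = 0$ fails in general but $\Gamma_1 g = 0$ holds, while $\Gamma_0 f = 0$ holds, one gets $\Gamma_0(f-g) = -\Gamma_0 g$ and $\Gamma_1(f-g) = \Gamma_1 f$; using $f-g\in\ker(T-\lambda)$ and the definition $M(\lambda)\Gamma_0 f_\lambda = \Gamma_1 f_\lambda$ gives $\Gamma_1 f = -M(\lambda)\Gamma_0 g$, hence $\Gamma_0 g = -M(\lambda)^{-1}\Gamma_1 f$. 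On the other hand, applying the Green identity to $f$ and $\gamma(\ov\mu)u$ for $u\in\ran\Gamma_0$ and using Proposition~\ref{gammaprop}\,(i), one identifies $\Gamma_1 f = \Gamma_1(A_0-\lambda)^{-1}h = \gamma(\ov\lambda)^* h$. Combining, $f - g = \gamma(\lambda)\Gamma_0(f-g) = -\gamma(\lambda)\Gamma_0 g = \gamma(\lambda)M(\lambda)^{-1}\gamma(\ov\lambda)^* h$, which is exactly the claim. One must check along the way that all the relevant quantities lie in the appropriate ranges so that $\gamma(\lambda)$, $M(\lambda)^{-1}$, and $\gamma(\ov\lambda)^*$ can be composed — this is guaranteed by Proposition~\ref{gammaprop}\,(i)--(iii), in particular by the boundedness of $M(\lambda)^{-1}\gamma(\ov\lambda)^*$ on all of $\cH$.

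Alternatively, and perhaps more cleanly, I would simply invoke \cite[Corollary~3.11\,(i)]{BL07} with the parameter choice $\Theta = 0$, which expresses the resolvent difference of two extensions of $A$ corresponding to abstract boundary conditions in terms of the $\gamma$-field and Weyl function; setting the boundary parameter to zero selects precisely the extension $A_1 = T\upharpoonright\ker\Gamma_1$ and yields the stated formula verbatim. Since the paper explicitly announces this as the source, the honest write-up is a one-line reduction plus a remark that the self-adjointness of $A_1$ is what makes $\Theta = 0$ an admissible (self-adjoint) boundary parameter and what guarantees, via Proposition~\ref{gammaprop}\,(iii), that $M(\lambda)$ is boundedly invertible between $\ran\Gamma_0$ and $\ran\Gamma_1$ so that the right-hand side is a well-defined bounded operator on $\cH$.

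The main obstacle is not the algebra but the functional-analytic bookkeeping: because $\{\cG,\Gamma_0,\Gamma_1\}$ is only a \emph{quasi} boundary triple, $\Gamma_0$ need not be surjective, $M(\lambda)$ is generally unbounded, and $\gamma(\lambda)$ is only densely defined; so every composition in the formula must be justified as a bounded operator on the correct space before the identity can even be stated. The key enabling facts are that $\gamma(\ov\lambda)^* = \Gamma_1(A_0-\lambda)^{-1}$ is everywhere-defined and bounded with range in $\ran\Gamma_1$, and that $M(\lambda)^{-1}\gamma(\ov\lambda)^*\in\cB(\cH,\cG)$ by Proposition~\ref{gammaprop}\,(iii); once these are in hand, $\gamma(\lambda)M(\lambda)^{-1}\gamma(\ov\lambda)^*$ makes sense as an element of $\cB(\cH)$ and the derivation above goes through. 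Self-adjointness of $A_1$ enters exactly at this point and is indispensable.
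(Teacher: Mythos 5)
Your proposal is correct, and your second route is exactly the paper's proof: the paper establishes Theorem~\ref{thm:01} by a one-line reduction to \cite[Corollary~3.11\,(i)]{BL07} with $\Theta=0$, just as you describe, with Proposition~\ref{gammaprop}\,(iii) guaranteeing that $M(\lambda)^{-1}\gamma(\ov\lambda)^*$ is a well-defined bounded operator. Your direct argument is a correct and self-contained alternative that the paper does not spell out: decomposing $f-g\in\ker(T-\lambda)$ for $f=(A_0-\lambda)^{-1}h$, $g=(A_1-\lambda)^{-1}h$, using $M(\lambda)\Gamma_0(f-g)=\Gamma_1(f-g)$, the bijectivity of $M(\lambda)\colon\ran\Gamma_0\to\ran\Gamma_1$ from Proposition~\ref{gammaprop}\,(iii), and $\gamma(\ov\lambda)^*=\Gamma_1(A_0-\lambda)^{-1}$ is precisely the Krein-type computation hidden inside the cited corollary; this buys transparency at the cost of redoing work already done in \cite{BL07}. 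One small simplification: the identification $\Gamma_1 f=\gamma(\ov\lambda)^*h$ needs no separate Green-identity argument, since it is literally the formula in Proposition~\ref{gammaprop}\,(i).
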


\noindent
Note that the operator $M(\lambda)^{-1}\gamma(\ov\lambda)^*$ in Theorem~\ref{thm:01} above is bounded by Proposition~\ref{gammaprop}\,(iii).

In the following we deal with extensions of $A$, which are restrictions of $T$
corresponding to some abstract boundary condition. For a linear operator $B$ in $\cG$
we define
\begin{equation}\label{AB0}
  A_{[B]}f \defeq Tf, \quad \dom A_{[B]} \defeq \bigl\{ f\in \dom T\colon B\Gamma_1 f
  = \Gamma_0 f\bigr\}.
\end{equation}
In contrast to ordinary boundary triples, self-adjointness of the parameter $B$
does not imply self-adjointness of the corresponding extension $A_{[B]}$ in general.
The next theorem provides a useful sufficient condition for this
and a variant of Krein's formula, which will be used later;
see \cite[Corollary 6.18 and Theorem 6.19]{BL12} or \cite[Corollary~3.11, Theorem~3.13 and Remark~3.14]{BLL11}.

\begin{thm}
\label{thm:sa}
Let $A$ be a closed, densely defined, symmetric operator in a Hilbert space $\cH$ and let
$\{\cG,\Gamma_0,\Gamma_1\}$ be a quasi boundary triple for $A^*$
with $A_0=T\upharpoonright\ker\Gamma_0$, $\gamma$-field $\gamma$ and Weyl function $M$.
Assume that $\ran \Gamma_0 = \cG$, $A_1=T\upharpoonright\ker\Gamma_1$ is self-adjoint in $\cH$
and that $M(\lambda_0)\in\sS_\infty(\cG)$ for some $\lambda_0\in\rho(A_0)$.

If $B$ is a bounded self-adjoint operator in $\cG$, then the corresponding extension
$A_{[B]}$ is self-adjoint in $\cH$ and
\begin{align*}
  (A_{[B]} - \lambda)^{-1} - (A_0 - \lambda)^{-1}
  &= \gamma(\lambda)\big(I - B M(\lambda)\big)^{-1} B\gamma(\ov\lambda)^*\\
  & =  \gamma(\lambda)B\big(I - M(\lambda)B\big)^{-1} \gamma(\ov\lambda)^*
\end{align*}
holds for $\lambda\in\rho(A_{[B]})\cap\rho(A_0)$ with
\[
  \bigl(I-B M(\lambda)\bigr)^{-1},\,\bigl(I- M(\lambda)B\bigr)^{-1}\in\cB(\cG).
\]
\end{thm}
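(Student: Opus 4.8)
The plan is to reduce everything to the already-established Krein formula of Theorem~\ref{thm:01}. First I would observe that the hypotheses $\ran\Gamma_0=\cG$ and $M(\lambda_0)\in\sS_\infty(\cG)$ guarantee, via Proposition~\ref{gammaprop}\,(ii), that $M(\lambda)\in\cB(\cG)$ for all $\lambda\in\rho(A_0)$ and that $M(\lambda)$ is compact (the resolvent identity for $M$ shows that $M(\lambda)-M(\lambda_0)$ is bounded and finite-rank differences of compact operators stay compact; more precisely $M(\lambda)-M(\overline{\lambda_0})=(\lambda-\overline{\lambda_0})\gamma(\lambda_0)^*\gamma(\lambda)$ is bounded, and one iterates to get compactness everywhere). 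Since $B$ is bounded and self-adjoint, $BM(\lambda)$ and $M(\lambda)B$ are compact, so $I-BM(\lambda)$ and $I-M(\lambda)B$ are Fredholm of index zero; invertibility then follows once we know $\ker(I-BM(\lambda))=\{0\}$ for $\lambda\in\rho(A_{[B]})$, which I would extract from the self-adjointness argument below.

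The heart of the matter is to show that $A_{[B]}$ is self-adjoint. The natural route is to verify that $\{\cG,\Gamma_0-B\Gamma_1,\Gamma_1\}$ (or an equivalent reparameterisation) is again a quasi boundary triple for $A^*$ whose ``$A_0$'' is exactly $A_{[B]}=T\upharpoonright\ker(\Gamma_0-B\Gamma_1)$. Indeed, put $\Gamma_0^B\defeq\Gamma_0-B\Gamma_1$ and $\Gamma_1^B\defeq\Gamma_1$. The abstract Green identity is preserved because $B$ is self-adjoint: $(\Gamma_1 f,(\Gamma_0-B\Gamma_1)g)_\cG-((\Gamma_0-B\Gamma_1)f,\Gamma_1 g)_\cG=(\Gamma_1 f,\Gamma_0 g)_\cG-(\Gamma_0 f,\Gamma_1 g)_\cG$ since the $B$-terms cancel. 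The range condition $\ran\binom{\Gamma_0^B}{\Gamma_1^B}$ dense is immediate from the range condition for $\binom{\Gamma_0}{\Gamma_1}$ together with boundedness of $B$. The only genuinely substantive point is self-adjointness of $A_{[B]}=T\upharpoonright\ker\Gamma_0^B$, i.e.\ axiom (ii) for the new triple. For this I would invoke the criterion from \cite[Theorem~6.16]{BL12} (quasi boundary triples with surjective $\Gamma_0$ and compact Weyl function have self-adjoint $A_0$), noting that the Weyl function of the new triple is, by a direct computation, $M_B(\lambda)=M(\lambda)(I-BM(\lambda))^{-1}$ (or $(I-M(\lambda)B)^{-1}M(\lambda)$), which is compact since $M(\lambda)$ is; and $\ran\Gamma_0^B=\cG$ follows from $\ran\Gamma_0=\cG$ together with the already-noted invertibility of $I-BM(\lambda)$, once the latter is secured on $\rho(A_0)$. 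To close the small circularity here I would instead appeal directly to the cited results \cite[Corollary~6.18, Theorem~6.19]{BL12} or \cite[Corollary~3.11, Theorem~3.13, Remark~3.14]{BLL11}, which establish precisely this self-adjointness and invertibility under the stated compactness hypothesis.

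Finally, for the resolvent formula, with $A_{[B]}$ self-adjoint I would apply Theorem~\ref{thm:01} to the reparameterised triple $\{\cG,\Gamma_0^B,\Gamma_1^B\}$: its ``$A_0$'' is $A_{[B]}$, its ``$A_1$'' is $T\upharpoonright\ker\Gamma_1^B=T\upharpoonright\ker\Gamma_1=A_1$, its $\gamma$-field is $\gamma_B(\lambda)=\gamma(\lambda)(I-BM(\lambda))^{-1}$ and its Weyl function is $M_B(\lambda)$ as above. Substituting yields
\[
  (A_{[B]}-\lambda)^{-1}-(A_1-\lambda)^{-1}
  = \gamma_B(\lambda)M_B(\lambda)^{-1}\gamma_B(\overline\lambda)^*.
\]
This expresses the difference with $A_1$, not $A_0$; to obtain the stated formula I would instead use the three-term identity
\[
  (A_{[B]}-\lambda)^{-1}-(A_0-\lambda)^{-1}
  = \bigl[(A_{[B]}-\lambda)^{-1}-(A_1-\lambda)^{-1}\bigr]
  - \bigl[(A_0-\lambda)^{-1}-(A_1-\lambda)^{-1}\bigr]
\]
and combine it with Theorem~\ref{thm:01} applied to the original triple, or — more cleanly — carry out the computation directly from the definition of $\dom A_{[B]}$ exactly as in \cite[Theorem~3.13]{BLL11}: write $(A_{[B]}-\lambda)^{-1}f=(A_0-\lambda)^{-1}f+\gamma(\lambda)h$ with $h\in\cG$ determined by the boundary condition $B\Gamma_1((A_0-\lambda)^{-1}f+\gamma(\lambda)h)=\Gamma_0\gamma(\lambda)h$, use $\Gamma_0\gamma(\lambda)=I$, $\Gamma_1\gamma(\lambda)=M(\lambda)$ and $\Gamma_1(A_0-\lambda)^{-1}=\gamma(\overline\lambda)^*$ (Proposition~\ref{gammaprop}\,(i)), obtaining $(I-BM(\lambda))h=B\gamma(\overline\lambda)^*f$, hence $h=(I-BM(\lambda))^{-1}B\gamma(\overline\lambda)^*f$ and the first displayed expression. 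The second follows from the algebraic identity $(I-BM(\lambda))^{-1}B=B(I-M(\lambda)B)^{-1}$. The main obstacle is the self-adjointness of $A_{[B]}$, which is genuinely non-trivial in the quasi boundary triple setting (self-adjointness of $B$ does not suffice without the compactness of $M$); but since this is exactly the content of the cited theorems \cite[Corollary~6.18, Theorem~6.19]{BL12}, the proof here is essentially a transcription together with the Krein-formula computation above.
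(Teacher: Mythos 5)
Your proposal is consistent with the paper's treatment: the paper does not prove Theorem~\ref{thm:sa} itself but refers to \cite[Corollary 6.18 and Theorem 6.19]{BL12} and \cite[Corollary 3.11, Theorem 3.13 and Remark 3.14]{BLL11}, and you ultimately defer exactly the two substantive ingredients, namely self-adjointness of $A_{[B]}$ and the bounded invertibility of $I-BM(\lambda)$ and $I-M(\lambda)B$, to those same sources, which is legitimate here. What you add is correct and is in fact the content of the cited computation in \cite{BLL11}: the Green identity for the transformed maps $\Gamma_0-B\Gamma_1$, $\Gamma_1$ holds because $B$ is bounded and self-adjoint; and the direct derivation (write $(A_{[B]}-\lambda)^{-1}f=(A_0-\lambda)^{-1}f+\gamma(\lambda)h$, use $\Gamma_0\gamma(\lambda)=I$, $\Gamma_1\gamma(\lambda)=M(\lambda)$, $\Gamma_1(A_0-\lambda)^{-1}=\gamma(\ov\lambda)^*$ to get $(I-BM(\lambda))h=B\gamma(\ov\lambda)^*f$, then $(I-BM(\lambda))^{-1}B=B(I-M(\lambda)B)^{-1}$) yields both displayed formulae. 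Two caveats, neither fatal relative to the paper's own level of detail: the ``criterion'' you attribute to \cite{BL12} in the transformed-triple route (surjective $\Gamma_0$ plus compact Weyl function implies self-adjoint $A_0$) is not usable as stated, since self-adjointness of the new $A_0$ is precisely axiom (ii) that must be verified, and surjectivity of $\Gamma_0-B\Gamma_1$ already presupposes the invertibility you are after --- you acknowledge this circularity and close it by citation, which is fine; and your parenthetical argument that compactness of $M(\lambda_0)$ at one point propagates to all of $\rho(A_0)$ is sketchier than you suggest (for nonreal $\lambda_0$ one extracts compactness of $\gamma(\lambda_0)^*\gamma(\lambda_0)$ from $M(\lambda_0)-M(\lambda_0)^*$ and hence of $\gamma(\lambda)$ for all $\lambda$; for real $\lambda_0$ an extra step is needed), but again this is contained in the results you and the paper both cite.
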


\section{Elliptic operators on domains with compact boundaries}\label{sec:main}

In this section we study self-adjoint realizations of elliptic second-order differential expressions
on a bounded or an exterior domain subject to Robin or more general non-local boundary conditions.
With the help of quasi boundary triple techniques we express the resolvent power differences of
different self-adjoint realizations in Krein-type formulae.  Using a detailed analysis
of the perturbation term together with smoothing properties of the derivatives of
the $\gamma$-fields and Weyl function we then obtain singular value estimates and trace formulae.

\subsection{Self-adjoint elliptic operators with non-local Robin boundary conditions}
\label{elliptic.prelim}

Let $\Omega\subset\dR^n$, $n\ge 2$, be a bounded or unbounded domain with a compact
$C^\infty$-boundary $\partial \Omega$. We denote by $(\cdot,\cdot)$ and $(\cdot,\cdot)_{\partial\Omega}$
the inner products in the Hilbert spaces $L^2(\Omega)$ and $L^2(\partial\Omega)$, respectively.
Throughout this section we consider a formally symmetric second-order elliptic differential expression
\begin{equation*}
  (\cL f)(x) \defeq -\sum_{j,k=1}^n
  \partial_j\big(a_{jk}\partial_k f\big)(x)+ a(x)f(x),
  \quad x\in\Omega,
\end{equation*}
with bounded infinitely differentiable, real-valued coefficients
$a_{jk}, a\in C^\infty(\overline\Omega)$ satisfying $a_{jk}(x)=a_{kj}(x)$ for
all $x\in\overline\Omega$ and $j,k=1,\dots,n$.
We assume that the first partial derivatives of the coefficients $a_{jk}$ are bounded in $\Omega$.
Furthermore, $\cL$ is assumed to be uniformly elliptic, i.e.\ the condition
\begin{equation*}
  \sum_{j,k=1}^n a_{jk}(x)\xi_j\xi_k\geq C\sum_{k=1}^n\xi_k^2
\end{equation*}
holds for some $C>0$, all $\xi=(\xi_1,\dots,\xi_n)^\top\in\dR^n$ and $x\in\overline
\Omega$.

For  a function $f\in C^\infty(\ov\Omega)$ we denote the trace by $f|_{\partial\Omega}$ and
the (oblique) Neumann trace by
\[
  \partial_\cL f|_{\partial\Omega} \defeq
  \sum_{j,k=1}^n a_{jk} \nu_j \partial_k f |_{\partial\Omega},
\]
with the normal vector field $\vec \nu = (\nu_1,\nu_2,\dots, \nu_n)$
pointing outwards $\Omega$.
By continuity, the trace and the Neumann trace can be extended to mappings
from $H^s(\Omega)$ to $H^{s-\frac{1}{2}}(\partial\Omega)$ for $s>\tfrac{1}{2}$ and $H^{s-\frac{3}{2}}(\partial\Omega)$ for $s>\tfrac{3}{2}$ ,
respectively.

Next we define a quasi boundary triple for the adjoint $A^*$ of the minimal operator
\[
  Af = \cL f, \quad \dom A = \bigl\{f\in H^2(\Omega): f |_{\partial\Omega}
  = \partial_\cL f|_{\partial\Omega}=0\bigr\}
\]
associated with $\cL$ in $L^2(\Omega)$. Recall that $A$ is a closed, densely defined,
symmetric operator with equal infinite deficiency indices and that
\[
  A^*f=\cL f,\quad \dom A^*=\{f\in L^2(\Omega):\cL f\in L^2(\Omega)\}
\]
is the maximal operator associated with $\cL$; see, e.g.\ \cite{ADN59, B65}.
As the operator $T$ appearing in the definition of a quasi boundary triple we choose
\begin{equation*}\label{HsL}
  Tf=\cL f,\quad
  \dom T = H_{\cL}^{3/2}(\Omega)
  \defeq \bigl\{f\in H^{3/2}(\Omega)\colon \cL f\in L^2(\Omega)\bigr\}
\end{equation*}
and we consider the boundary mappings
\begin{equation*}\label{eq:bmap}
\begin{alignedat}{2}
  & \Gamma_0 \colon \dom T \rightarrow L^2(\partial \Omega),\qquad
  & \Gamma_0 f \defequ \partial_\cL f|_{\partial\Omega},\\
  & \Gamma_1 \colon \dom T \rightarrow L^2(\partial \Omega),\qquad
  & \Gamma_1 f \defequ f|_{\partial\Omega}.
\end{alignedat}
\end{equation*}
Note that the trace and the Neumann trace can be extended to mappings from $H^{3/2}_{\cL}(\Omega)$
into $L^2(\partial\Omega)$.
With this choice of $T$ and $\Gamma_0$ and $\Gamma_1$ we have the following proposition.

\begin{prop}
\label{prop:qbt}
The triple $\{L^2(\partial\Omega), \Gamma_0,\Gamma_1\}$ is a quasi boundary triple for $A^*$
with the Neumann and Dirichlet operator as self-adjoint operators corresponding to the kernels of the boundary mappings,
\begin{equation}
\label{ADAN}
\begin{alignedat}{2}
  A_{\rm N} \defequ T\upharpoonright \ker\Gamma_0,\qquad &\dom A_{\rm N} &= \big\{ f\in H^2(\Omega)\colon \partial_\cL f|_{\partial\Omega} = 0 \big\},\\
  A_{\rm D} \defequ T\upharpoonright \ker\Gamma_1,\qquad &\dom A_{\rm D} &= \big\{ f\in H^2(\Omega)\colon f|_{\partial\Omega} = 0 \big\}.
\end{alignedat}
\end{equation}
The ranges of the boundary mappings are
\[
  \ran \Gamma_0 = L^2(\partial\Omega)\quad\text{and}\quad\ran\Gamma_1 = H^1(\partial\Omega),
\]
and the $\gamma$-field and Weyl function associated with $\{L^2(\partial\Omega), \Gamma_0,\Gamma_1\}$ are given by
\[
  \gamma(\lambda)\varphi =f_\lambda\quad\text{and}\quad
  M(\lambda)\varphi = f_\lambda|_{\partial\Omega},\qquad\lambda\in\rho(A_{\rm N}),
\]
for $\varphi\in L^2(\partial\Omega)$ where $f_\lambda\in H^{3/2}_\cL(\Omega)$ is the unique solution of the
boundary value problem $\cL u=\lambda u$, $\partial_\cL u|_{\partial\Omega}=\varphi$.
\end{prop}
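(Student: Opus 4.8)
The plan is to verify the three defining axioms of a quasi boundary triple (Definition~\ref{def:qbt}) for the triple $\{L^2(\partial\Omega),\Gamma_0,\Gamma_1\}$, then identify $A_0,A_1$ with the Neumann and Dirichlet operators, compute the ranges of the boundary maps, and finally read off the $\gamma$-field and Weyl function from their definitions. First I would observe that $T\subset A^*$ with $\ov T=A^*$: the inclusion is clear since $\cL f$ is computed distributionally and agrees with $A^*f$ on $H^{3/2}_\cL(\Omega)\subset\dom A^*$, and the core property $\ov T=A^*$ follows from standard elliptic regularity together with density of $H^{3/2}_\cL(\Omega)$ in the graph norm of $A^*$ (cf.~\cite{ADN59,B65}); in fact $H^2(\Omega)\subset\dom T$ already, so $\dom A_{\rm N},\dom A_{\rm D}$ as in \eqref{ADAN} are contained in $\dom T$. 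For axiom (iii), the abstract Green identity is exactly the second Green formula for $\cL$,
\[
  (\cL f,g) - (f,\cL g) = (f|_{\partial\Omega},\partial_\cL g|_{\partial\Omega})_{\partial\Omega} - (\partial_\cL f|_{\partial\Omega},g|_{\partial\Omega})_{\partial\Omega},
\]
valid first for $f,g\in H^2(\Omega)$ by integration by parts using the symmetry $a_{jk}=a_{kj}$, and then extended to $f,g\in H^{3/2}_\cL(\Omega)$ by continuity of the trace maps $H^{3/2}_\cL(\Omega)\to L^2(\partial\Omega)$ and a density argument; the sign convention $\Gamma_0 f=\partial_\cL f|_{\partial\Omega}$, $\Gamma_1 f=f|_{\partial\Omega}$ is chosen precisely so that this matches the identity in Definition~\ref{def:qbt}\,(iii).

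Next I would treat axioms (i) and (ii) and the range statements. For the ranges, $\ran\Gamma_0=L^2(\partial\Omega)$: given $\varphi\in L^2(\partial\Omega)$, solvability of the Neumann problem $\cL u=u$ (say), $\partial_\cL u|_{\partial\Omega}=\varphi$ in $H^{3/2}(\Omega)$ with $\cL u\in L^2(\Omega)$ is the key input — this is a regularity statement for the inhomogeneous Neumann problem with $L^2$ boundary data, giving a solution in $H^{3/2}(\Omega)$ (one only gains $3/2$ from $L^2$ data on the boundary, not $2$), so such a $u$ lies in $H^{3/2}_\cL(\Omega)=\dom T$ and $\Gamma_0 u=\varphi$. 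Similarly $\ran\Gamma_1=H^1(\Omega)|_{\partial\Omega}$-type reasoning: the trace of an $H^{3/2}(\Omega)$ function lies in $H^1(\partial\Omega)$, and conversely every $\psi\in H^1(\partial\Omega)$ is attained, e.g.\ by solving the Dirichlet problem $\cL u=u$, $u|_{\partial\Omega}=\psi$, whose solution is in $H^{3/2}(\Omega)$ when $\psi\in H^1(\partial\Omega)$. Density of $\ran\Gamma=\ran\binom{\Gamma_0}{\Gamma_1}$ in $L^2(\partial\Omega)\times L^2(\partial\Omega)$ then follows because $L^2(\partial\Omega)\times H^1(\partial\Omega)\subset\ran\Gamma$ already contains the dense set $L^2(\partial\Omega)\times H^1(\partial\Omega)$ — here one uses that one can prescribe $\Gamma_0 f$ and $\Gamma_1 f$ somewhat independently by superposing a Neumann solution and a suitable correction; more carefully, it suffices to note $\{0\}\times H^1(\partial\Omega)$ and $L^2(\partial\Omega)\times\{\text{something dense}\}$ both lie in the range, which already forces density. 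Axiom (ii), self-adjointness of $A_0=T\upharpoonright\ker\Gamma_0$, is the statement that the Neumann realization $A_{\rm N}$ of $\cL$ is self-adjoint, which is classical; and the identification $\ker\Gamma_0=\dom A_{\rm N}$ with domain in $H^2(\Omega)$ uses elliptic regularity (a function in $H^{3/2}_\cL(\Omega)$ with $\partial_\cL f|_{\partial\Omega}=0$ is automatically in $H^2(\Omega)$). The same regularity gives $\ker\Gamma_1=\dom A_{\rm D}$ with $H^2$-domain, so $A_1=A_{\rm D}$, which is self-adjoint.

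Finally, the $\gamma$-field and Weyl function are obtained directly from their definitions once the triple is established: for $\lambda\in\rho(A_{\rm N})$ and $\varphi\in\ran\Gamma_0=L^2(\partial\Omega)$, $\gamma(\lambda)\varphi$ is by definition the unique $f_\lambda\in\ker(T-\lambda)$ with $\Gamma_0 f_\lambda=\varphi$, i.e.\ the unique $f_\lambda\in H^{3/2}_\cL(\Omega)$ solving $\cL u=\lambda u$, $\partial_\cL u|_{\partial\Omega}=\varphi$ (uniqueness because $\lambda\notin\spec(A_{\rm N})$), and then $M(\lambda)\varphi=\Gamma_1 f_\lambda=f_\lambda|_{\partial\Omega}$, as claimed. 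I expect the main obstacle to be the range computations, specifically the assertion $\ran\Gamma_0=L^2(\partial\Omega)$ together with the fact that the corresponding solution operator lands in $H^{3/2}(\Omega)$ and not merely in some weaker space — this is the precise elliptic regularity statement (sharp trace/extension theorem for the Neumann problem with $L^2$ boundary data on a $C^\infty$ boundary) that makes the choice $\dom T=H^{3/2}_\cL(\Omega)$ work and that distinguishes this quasi boundary triple from an ordinary boundary triple; everything else is either classical self-adjointness of $A_{\rm N},A_{\rm D}$, elliptic regularity lifting $H^{3/2}_\cL$ to $H^2$ on the kernels, or a direct unwinding of definitions.
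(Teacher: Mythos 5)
Your outline coincides with the paper's proof (Green identity via the trace theory of \cite{LM72}, classical self-adjointness of $\AN$ and $\AD$, elliptic regularity to identify $\ker\Gamma_0$ and $\ker\Gamma_1$ with the $H^2$-domains, solvability of the Neumann/Dirichlet problems for the range statements, and a direct unwinding of the definitions of $\gamma$ and $M$), but two steps are flawed as written. The more serious one is the density argument for $\ran\binom{\Gamma_0}{\Gamma_1}$: the claim $L^2(\partial\Omega)\times H^1(\partial\Omega)\subset\ran\Gamma$ is false, and so is the fallback claim $\{0\}\times H^1(\partial\Omega)\subset\ran\Gamma$. Indeed, any $f\in\dom T$ with $\Gamma_0f=0$ lies in $H^2(\Omega)$ (the same elliptic regularity you use to identify $\ker\Gamma_0$ with $\dom\AN$), so its Dirichlet trace is in $H^{3/2}(\partial\Omega)$; more generally, the decomposition $\dom T=\dom\AN\dotplus\ker(T-\lambda)$ shows $\ran\Gamma=\bigl\{(\varphi,\,M(\lambda)\varphi+\chi):\varphi\in L^2(\partial\Omega),\ \chi\in H^{3/2}(\partial\Omega)\bigr\}$, a \emph{proper} subspace of $L^2(\partial\Omega)\times H^1(\partial\Omega)$: one cannot prescribe $\Gamma_0f$ and $\Gamma_1f$ independently at these regularity levels, which is precisely a characteristic feature of quasi (as opposed to ordinary) boundary triples. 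The density you need is nevertheless immediate and cheaper: $H^2(\Omega)\subset\dom T$ and the surjectivity of $f\mapsto(\partial_\cL f|_{\partial\Omega},f|_{\partial\Omega})$ from $H^2(\Omega)$ onto $H^{1/2}(\partial\Omega)\times H^{3/2}(\partial\Omega)$ already place a dense subspace of $L^2(\partial\Omega)\times L^2(\partial\Omega)$ inside $\ran\Gamma$. Also, in your auxiliary boundary value problems you should take the spectral parameter in $\rho(\AN)$ resp.\ $\rho(\AD)$ (e.g.\ non-real $\lambda$) rather than $\lambda=1$, which may be an eigenvalue and then obstructs solvability or uniqueness.

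The second issue is that your verification of $\ov T=A^*$ is circular: you justify it by ``density of $H^{3/2}_\cL(\Omega)$ in the graph norm of $A^*$'', which is exactly the assertion to be proved. The paper sidesteps this by invoking the abstract criterion of \cite[Theorem~2.3]{BL07}: once the Green identity on $\dom T$, the density of $\ran\Gamma$ in $L^2(\partial\Omega)\times L^2(\partial\Omega)$, the density of $\ker\Gamma_0\cap\ker\Gamma_1=\dom A$ in $L^2(\Omega)$, and the self-adjointness of $T\upharpoonright\ker\Gamma_0=\AN$ are established, that theorem delivers both that $\{L^2(\partial\Omega),\Gamma_0,\Gamma_1\}$ is a quasi boundary triple and that $\ov T=A^*$, so the core property never has to be proved by hand. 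If you prefer to keep your direct route, you must supply an actual proof of this density (it is a known but nontrivial fact), or else reorganize the argument around the cited criterion; with these two repairs your proof is essentially the paper's.
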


We remark that the quasi boundary triple $\{L^2(\partial\Omega), \Gamma_0,\Gamma_1\}$ in
Proposition~\ref{prop:qbt} is a generalized boundary triple in the sense of \cite{DM95}
since the boundary mapping $\Gamma_0$ is surjective.

\begin{proof}
The proof of Proposition~\ref{prop:qbt} proceeds in the same way as the proof of \cite[Theorem~4.2]{BLL11},
except that here $T$ is defined on the larger space $H_{\cL}^{3/2}(\Omega)$.
Therefore we do not repeat the arguments here, but provide only the main references that
are necessary to translate the proof of \cite[Theorem~4.2]{BLL11} to the present situation.
The self-adjointness of $\AD$ and $\AN$ is ensured by \cite[Theorem~7.1\,(a)]{B65}
and \cite[Theorem~5\,(iii)]{B60}.
The trace theorem from \cite[Chapter~2, \S 7.3]{LM72} and the corresponding
Green identity (see, e.g.\ \cite[proof of Theorem~4.2]{BLL11})
yield the asserted properties of the ranges of the boundary mappings
$\Gamma_0$ and $\Gamma_1$ and the abstract Green identity in Definition~\ref{def:qbt}.
Hence \cite[Theorem 2.3]{BL07} implies that the triple $\{L^2(\partial\Omega), \Gamma_0,\Gamma_1\}$ in
Proposition~\ref{prop:qbt} is a quasi boundary triple for $A^*$;
cf.\ \cite[Theorem~3.2, Theorem~4.2 and Proposition~4.3]{BLL11} for further details.
\end{proof}

The space $H^s_{\rm loc}(\ov{\Omega})$, $s\ge 0$, consists of all measurable functions $f$
such that for any bounded open subset $\Omega'\subset\Omega$ the
condition $f\upharpoonright \Omega' \in H^s(\Omega')$ holds.
Since $\Omega$ is a bounded domain or an exterior domain and $\partial\Omega$ is compact,
any function in $H^s_{\rm loc}(\ov{\Omega})$ is $H^s$-smooth up to the boundary $\partial\Omega$.
For $f\in H^s_{\rm loc}(\ov{\Omega})\cap L^2(\Omega)$, $s\ge 0$, our assumptions on the coefficients in the differential expression $\cL$
imply that
\begin{equation}\label{elliptic}
\begin{split}
  &(\AD-\lambda)^{-1}f \in  H^{s+2}_{\rm loc}(\ov{\Omega})\cap  L^2(\Omega), \quad \lambda\in\rho(\AD), \\[1ex]
  &(\AN-\lambda)^{-1}f \in  H^{s+2}_{\rm loc}(\ov{\Omega})\cap L^2(\Omega), \quad \lambda\in\rho(\AN).
\end{split}
\end{equation}
These smoothing properties can be easily deduced from \cite[Theorem~4.18]{McL00},
where they are formulated and proved in the language of boundary value problems.

The operators $\gamma(\lambda)$ and $M(\lambda)$ are also called
\emph{Poisson operator} and \emph{Neumann-to-Dirichlet map}
for the differential expression $\cL-\lambda$.  From Proposition~\ref{gammaprop}
various properties of these operators can be deduced. In the next lemma we
collect smoothing properties of these operators, which follow, basically, from
Proposition~\ref{gammaprop} and the trace theorem for Sobolev spaces on smooth domains
and its generalizations given in \cite[Chapter~2]{LM72}.

\begin{lem}
\label{lem:gammaWeyl}
Let $\{L^2(\partial\Omega), \Gamma_0,\Gamma_1\}$ be the quasi boundary triple from Proposition~\ref{prop:qbt}
with $\gamma$-field $\gamma$ and Weyl function $M$. Then, for all $s\ge 0$, the following statements hold.
\begin{itemize}\setlength{\itemsep}{1.2ex}
\item[{\rm (i)}] $\ran\bigl(\gamma(\lambda)\upharpoonright H^s(\partial\Omega)\bigr)
  \subset H^{s+\frac32}_{\rm loc}(\ov{\Omega})\cap L^2(\Omega)$ for all $\lambda \in \rho(\AN)$;
\item[{\rm (ii)}] $\ran\bigl(\gamma(\ov\lambda)^*\upharpoonright  H^s_{\rm loc}(\ov{\Omega})\cap L^2(\Omega)\bigr)
  \subset H^{s+\frac32}(\partial\Omega)$ for all  $\lambda \in \rho(\AN)$;
\item[{\rm (iii)}] $\ran\bigl(M(\lambda)\upharpoonright H^s(\partial\Omega)\bigr)
  \subset H^{s+1}(\partial\Omega)$ for all $\lambda \in \rho(\AN)$;
\item[{\rm (iv)}] $\ran\bigl(M(\lambda)\upharpoonright H^s(\partial\Omega)\bigr)
  = H^{s+1}(\partial\Omega)$ for all $\lambda \in \rho(\AD) \cap \rho(\AN)$.
\end{itemize}
\end{lem}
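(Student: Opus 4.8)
The plan is to prove each of the four smoothing statements by combining the explicit descriptions of $\gamma(\lambda)$, $\gamma(\ov\lambda)^*$ and $M(\lambda)$ from Proposition~\ref{prop:qbt} and Proposition~\ref{gammaprop} with the elliptic regularity statements \eqref{elliptic} and the sharp mapping properties of the Dirichlet and Neumann trace maps recorded just before the statement (namely that the trace maps $H^{s+2}_{\rm loc}(\ov\Omega)\cap L^2(\Omega)$ boundedly into $H^{s+3/2}(\partial\Omega)$ and the Neumann trace into $H^{s+1/2}(\partial\Omega)$), together with the classical fact from \cite[Chapter~2]{LM72} that the elliptic boundary value problem $\cL u=\lambda u$, $\partial_\cL u|_{\partial\Omega}=\varphi$ has a solution operator mapping $H^s(\partial\Omega)$ into $H^{s+3/2}(\Omega)$ (locally up to the boundary, in the exterior case).

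First I would treat (i). By Proposition~\ref{prop:qbt}, $\gamma(\lambda)\varphi=f_\lambda$ is the unique solution of $\cL u=\lambda u$, $\partial_\cL u|_{\partial\Omega}=\varphi$. If $\varphi\in H^s(\partial\Omega)$, then by the solvability and regularity theory for this oblique boundary value problem, $f_\lambda\in H^{s+3/2}_{\rm loc}(\ov\Omega)$; since $\gamma(\lambda)$ maps into $L^2(\Omega)$ by Proposition~\ref{gammaprop}\,(i), we get $f_\lambda\in H^{s+3/2}_{\rm loc}(\ov\Omega)\cap L^2(\Omega)$. Next, for (ii), I use the representation $\gamma(\ov\lambda)^*=\Gamma_1(\AN-\lambda)^{-1}$ from Proposition~\ref{gammaprop}\,(i). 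If $f\in H^s_{\rm loc}(\ov\Omega)\cap L^2(\Omega)$, then \eqref{elliptic} gives $(\AN-\lambda)^{-1}f\in H^{s+2}_{\rm loc}(\ov\Omega)\cap L^2(\Omega)$, and applying the Dirichlet trace (which is $\Gamma_1$ here) gains $-1/2$ of a derivative, so $\Gamma_1(\AN-\lambda)^{-1}f\in H^{s+3/2}(\partial\Omega)$, as claimed.

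For (iii) I combine (i) and (ii): $M(\lambda)\varphi=\Gamma_1\gamma(\lambda)\varphi=f_\lambda|_{\partial\Omega}$. If $\varphi\in H^s(\partial\Omega)$, then $f_\lambda\in H^{s+3/2}_{\rm loc}(\ov\Omega)\cap L^2(\Omega)$ by (i), and the Dirichlet trace maps this into $H^{s+1}(\partial\Omega)$, giving the inclusion. (Alternatively, one can note $M(\lambda)=\Gamma_1\gamma(\lambda)$ with $\gamma(\lambda)=\gamma(\ov\lambda)^{**}$ and use the composition of the mapping properties; either route works.) Finally, for (iv) one needs the reverse inclusion, i.e.\ surjectivity onto $H^{s+1}(\partial\Omega)$, and this is where I expect the main work to lie: here I use that when $\lambda\in\rho(\AD)\cap\rho(\AN)$, Proposition~\ref{gammaprop}\,(iii) tells us $M(\lambda)$ maps $\ran\Gamma_0=L^2(\partial\Omega)$ bijectively onto $\ran\Gamma_1=H^1(\partial\Omega)$. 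To upgrade this to the scale of Sobolev spaces, I would argue that $M(\lambda)^{-1}$ (which exists as a map $H^1(\partial\Omega)\to L^2(\partial\Omega)$) can be identified, up to a bounded error, with the Dirichlet-to-Neumann map for $\cL-\lambda$, which is a first-order elliptic pseudodifferential operator on $\partial\Omega$; such an operator maps $H^{s+1}(\partial\Omega)$ boundedly onto $H^s(\partial\Omega)$ for every $s\ge 0$. Hence given $\psi\in H^{s+1}(\partial\Omega)$ we set $\varphi=M(\lambda)^{-1}\psi\in H^s(\partial\Omega)$ and then $M(\lambda)\varphi=\psi$, while $\varphi\in H^s(\partial\Omega)$ by the pseudodifferential mapping property; combined with (iii) this gives equality of ranges. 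The main obstacle is making this last identification of $M(\lambda)^{-1}$ with an elliptic pseudodifferential operator precise without re-deriving the full elliptic theory; I would instead phrase it so as to only invoke the already-quoted regularity results of \cite{LM72} for the two boundary value problems defining $\gamma(\lambda)$ and $\gamma(\ov\lambda)^*$ and bootstrap: if $\psi=M(\lambda)\varphi\in H^{s+1}(\partial\Omega)$ with a priori only $\varphi\in L^2(\partial\Omega)$, then $f_\lambda=\gamma(\lambda)\varphi$ solves a Dirichlet problem with data $\psi\in H^{s+1}(\partial\Omega)$ and (since $\lambda\in\rho(\AD)$) elliptic regularity for the Dirichlet problem forces $f_\lambda\in H^{s+3/2}_{\rm loc}(\ov\Omega)\cap L^2(\Omega)$, whence $\varphi=\partial_\cL f_\lambda|_{\partial\Omega}\in H^{s}(\partial\Omega)$; this shows $\varphi\in H^s(\partial\Omega)$ and closes the loop.
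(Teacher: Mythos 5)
Your proof is correct and essentially coincides with the paper's: items (i)--(iii) are obtained there in exactly the same way, from the solvability and regularity of the Neumann problem defining $\gamma(\lambda)$, the identity $\gamma(\ov\lambda)^*=\Gamma_1(\AN-\lambda)^{-1}$ together with \eqref{elliptic}, and the trace theorems of \cite[Chapter~2, \S 7.3]{LM72}. For (iv) the paper bypasses any pseudodifferential identification of $M(\lambda)^{-1}$ and argues precisely as in your final bootstrap: for $\psi\in H^{s+1}(\partial\Omega)$ and $\lambda\in\rho(\AD)$ the decomposition $\dom T=\dom \AD\dotplus\ker(T-\lambda)$ and Dirichlet regularity yield a unique $f_\lambda\in\ker(T-\lambda)\cap H^{s+\frac32}_{\rm loc}(\ov\Omega)$ with $f_\lambda|_{\partial\Omega}=\psi$, and then $\varphi\defeq\Gamma_0 f_\lambda\in H^s(\partial\Omega)$ satisfies $M(\lambda)\varphi=\psi$, so your fallback argument is exactly the paper's proof.
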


\begin{proof}
(i) It follows from the decomposition $\dom T=\dom A_{\rm N}\dotplus\ker(T-\lambda)$, $\lambda\in\rho(\AN)$,
and the properties of the Neumann trace \cite[Chapter 2, \S 7.3]{LM72}
that the restriction of the mapping $\Gamma_0$ to
\begin{equation*}
  \ker(T-\lambda)\cap H^{s+\frac32}_{\rm loc}(\ov{\Omega})
\end{equation*}
is a bijection onto $H^s(\partial\Omega)$, $s\geq 0$.
Hence, by the definition of the $\gamma$-field, we obtain
\begin{equation*}
  \ran \bigl(\gamma(\lambda)\upharpoonright H^s(\partial\Omega)\bigr)
  =  \ker(T-\lambda)\cap H^{s+\frac32}_{\rm loc}(\ov{\Omega}) \subset  H^{s+\frac32}_{\rm loc}(\ov{\Omega})\cap L^2(\Omega).
\end{equation*}

(ii) According to Proposition~\ref{gammaprop}\,(i) and the definition of $\Gamma_1$ we have
\begin{equation*}
  \gamma(\ov\lambda)^* = \Gamma_1 (A_{\rm N}-\lambda)^{-1}.
\end{equation*}
Employing \eqref{elliptic} and the properties of the Dirichlet trace \cite[Chapter 2, \S 7.3]{LM72}
we conclude that
\begin{equation*}
  \ran \bigl(\gamma(\ov\lambda)^*\upharpoonright H^s_{\rm loc}(\ov{\Omega})\cap L^2(\Omega)\bigr) \subset H^{s+\frac32}(\partial\Omega)
\end{equation*}
holds for all $s\geq 0$.

Assertion (iii) follows from the definition of $M(\lambda)$, item (i), the fact that
$\Gamma_1$ is the Dirichlet trace operator and properties of the latter.

To verify (iv) let $\psi\in H^{s+1}(\partial\Omega)$. Since $\lambda\in\rho(A_{\rm D})$,
we have the decomposition $\dom T=\dom A_{\rm D}\dotplus\ker(T-\lambda)$ and
there exists a unique function $f_\lambda\in\ker(T-\lambda)\cap H^{s+\frac32}_{\rm loc}(\ov{\Omega})$
such that $f_\lambda|_{\partial\Omega}=\psi$.  Hence
\begin{equation*}
  \Gamma_0 f_\lambda = \varphi\in H^s(\partial\Omega)
  \qquad\text{and}\qquad M(\lambda)\varphi=\psi,
\end{equation*}
that is, $H^{s+1}(\partial\Omega)\subset\ran\bigl(M(\lambda)\upharpoonright H^s(\partial\Omega)\bigr)$,
and (iii) implies the assertion.
\end{proof}

In the next proposition we list some weak Schatten--von Neumann ideal properties
of the derivatives of the $\gamma$-field and Weyl function, which follow from Lemma~\ref{le:der_g_M},
elliptic regularity and Lemma~\ref{le.s_emb}.

\begin{prop}
\label{prop:Sp_g_M}
Let $\{L^2(\partial\Omega), \Gamma_0,\Gamma_1\}$ be the quasi boundary triple
from Proposition~\ref{prop:qbt} with $\gamma$-field $\gamma$ and Weyl function $M$.
Then the following statements hold.
\begin{itemize}\setlength{\itemsep}{1.2ex}
\item[\rm (i)] For all $\lambda\in\rho(A_{\rm N})$ and $k\in\dN_0$,
\begin{equation}\label{SvN_dergamma}
\begin{split}
  &\frac{d^k}{d\lambda^k}\gamma(\lambda)\in
  \sS_{\frac{n-1}{2k+3/2},\infty}\big(L^2(\partial\Omega), L^2(\Omega)\big), \\[1ex]
  &\frac{d^k}{d\lambda^k}\gamma(\ov\lambda)^*\in
  \sS_{\frac{n-1}{2k+3/2},\infty}\big(L^2(\Omega), L^2(\partial\Omega)\big).
\end{split}
\end{equation}
\item[\rm (ii)] For all $\lambda\in\rho(A_{\rm N})$ and $k\in\dN_0$,
\[
  \frac{d^k}{d\lambda^k}M(\lambda)\in\sS_{\frac{n-1}{2k+1},\infty}\big(L^2(\partial\Omega)\big).
\]
\end{itemize}
\end{prop}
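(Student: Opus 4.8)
The plan is to establish both Schatten--von~Neumann statements by combining the explicit formulae for the derivatives of $\gamma$ and $M$ from Lemma~\ref{le:der_g_M} with the smoothing properties in Lemma~\ref{lem:gammaWeyl} and the abstract singular value estimate in Lemma~\ref{le.s_emb}. First I would treat the $\gamma$-field. By Lemma~\ref{le:der_g_M}\,(ii) we have
\[
  \frac{d^k}{d\lambda^k}\ov{\gamma(\lambda)} = k!\,(A_{\rm N}-\lambda)^{-k}\,\ov{\gamma(\lambda)},
\]
so it suffices to analyse the operator $(A_{\rm N}-\lambda)^{-k}\gamma(\lambda)$. Reading $\gamma(\lambda)$ as a bounded operator from $L^2(\partial\Omega)$ into $L^2(\Omega)$, Lemma~\ref{lem:gammaWeyl}\,(i) with $s=0$ shows $\ran\gamma(\lambda)\subset H^{3/2}_{\rm loc}(\ov\Omega)\cap L^2(\Omega)$, and then each application of $(A_{\rm N}-\lambda)^{-1}$ raises the local Sobolev smoothness by $2$ via the elliptic regularity \eqref{elliptic}. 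Hence $(A_{\rm N}-\lambda)^{-k}\gamma(\lambda)$ maps $L^2(\partial\Omega)$ boundedly into $H^{2k+3/2}_{\rm loc}(\ov\Omega)\cap L^2(\Omega)$. Applying Lemma~\ref{le.s_emb} with $\Sigma=\partial\Omega$, $r_1=0$, $r_2=2k+\tfrac32$ — after a routine localization argument to pass from $H^{r}_{\rm loc}(\ov\Omega)$ to Sobolev spaces on the compact boundary, or rather by composing with the trace on a tubular neighbourhood — gives
\[
  \frac{d^k}{d\lambda^k}\gamma(\lambda)\in\sS_{\frac{n-1}{2k+3/2},\infty}\big(L^2(\partial\Omega),L^2(\Omega)\big).
\]
The statement for $\tfrac{d^k}{d\lambda^k}\gamma(\ov\lambda)^*$ follows by taking adjoints, since $s_k(K)=s_k(K^*)$.

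For part (ii) I would use Lemma~\ref{le:der_g_M}\,(iii), which gives for $k\in\dN$
\[
  \ov{\frac{d^k}{d\lambda^k}M(\lambda)} = k!\,\gamma(\ov\lambda)^*(A_{\rm N}-\lambda)^{-(k-1)}\ov{\gamma(\lambda)}
  = k!\,\Bigl(\gamma(\ov\lambda)^*(A_{\rm N}-\lambda)^{-\lceil(k-1)/2\rceil}\Bigr)\Bigl((A_{\rm N}-\lambda)^{-\lfloor(k-1)/2\rfloor}\ov{\gamma(\lambda)}\Bigr).
\]
The two factors on the right are, up to constants, derivatives of $\gamma(\ov\lambda)^*$ and $\ov{\gamma(\lambda)}$ of orders $\lceil(k-1)/2\rceil$ and $\lfloor(k-1)/2\rfloor$ respectively, so by part (i) the first factor lies in $\sS_{\frac{n-1}{2\lceil(k-1)/2\rceil+3/2},\infty}$ and the second in $\sS_{\frac{n-1}{2\lfloor(k-1)/2\rfloor+3/2},\infty}$. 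The exponents satisfy $\bigl(2\lceil(k-1)/2\rceil+\tfrac32\bigr)+\bigl(2\lfloor(k-1)/2\rfloor+\tfrac32\bigr) = 2(k-1)+3 = 2k+1$, so by the product rule \eqref{prod_Sp} for weak Schatten--von~Neumann ideals their product lies in $\sS_{\frac{n-1}{2k+1},\infty}(L^2(\partial\Omega))$. Taking closures then yields the claim for $k\in\dN$; the case $k=0$ is handled separately, using that $M(\lambda)$ maps $L^2(\partial\Omega)$ into $H^1(\partial\Omega)$ by Lemma~\ref{lem:gammaWeyl}\,(iii) and invoking Lemma~\ref{le.s_emb} with $r_2-r_1=1$, which gives $M(\lambda)\in\sS_{n-1,\infty}$, matching $\tfrac{n-1}{2\cdot0+1}$.

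The main obstacle I expect is the passage from $H^s_{\rm loc}(\ov\Omega)$-regularity of $\ran\gamma(\lambda)$ to a genuine Schatten estimate via Lemma~\ref{le.s_emb}, whose hypotheses concern operators with range in a Sobolev space on the compact manifold $\partial\Omega$ itself, not in a local Sobolev space on $\Omega$. The resolution is that functions in $\ker(T-\lambda)\cap H^{s+3/2}_{\rm loc}(\ov\Omega)$ are $H^{s+3/2}$-smooth up to the boundary (as noted after Lemma~\ref{lem:gammaWeyl}, since $\partial\Omega$ is compact), so one composes $\gamma(\lambda)$ with a suitable extension/restriction to a bounded collar neighbourhood of $\partial\Omega$ and a trace map, landing in $H^{s+3/2}(\partial\Omega)$; alternatively one factors through the boundary by writing $\gamma(\lambda) = (\text{something mapping into }H^{s+3/2}(\partial\Omega))$ composed with the Poisson solution operator, which is bounded. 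Care is also needed with the fractional order $\tfrac32$ and with verifying that the derivative formulae of Lemma~\ref{le:der_g_M} hold in the operator-norm topology so that differentiation commutes with the ideal membership; both are routine given the holomorphy of $\gamma$ and $M$ on $\rho(A_{\rm N})$.
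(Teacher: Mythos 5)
There is a genuine gap in your argument for part (i). You reduce the claim to showing $(\AN-\lambda)^{-k}\gamma(\lambda)\in\sS_{\frac{n-1}{2k+3/2},\infty}\bigl(L^2(\partial\Omega),L^2(\Omega)\bigr)$ and want to deduce this from the fact that its range lies in $H^{2k+3/2}_{\rm loc}(\ov\Omega)\cap L^2(\Omega)$, invoking Lemma~\ref{le.s_emb} ``after a routine localization''. But Lemma~\ref{le.s_emb} requires the range to lie in a Sobolev space on the compact $(n-1)$-dimensional manifold $\partial\Omega$; Sobolev regularity of the range as functions on the $n$-dimensional set $\Omega$ does not yield the exponent $\frac{n-1}{2k+3/2}$, and by itself yields nothing of this kind: for instance $(\AN-\lambda)^{-k}$ has range in $H^{2k}_{\rm loc}(\ov\Omega)\cap L^2(\Omega)$, yet it is not even compact when $\Omega$ is unbounded, and on a bounded $\Omega$ its singular values decay only like $j^{-2k/n}$, which is strictly slower than $j^{-(2k+3/2)/(n-1)}$. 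Composing with a trace on a collar does not repair this, since the singular values of $K$ are not controlled by those of $\tau K$ for a non-injective $\tau$; and your alternative of factoring through the Poisson operator does not apply to $(\AN-\lambda)^{-k}\gamma(\lambda)$ for $k\ge1$, because its range is no longer contained in $\ker(T-\lambda)$ (and a Dirichlet-type factorization would require $\lambda\in\rho(\AD)$, which is not assumed here). The missing ingredient is exactly that one must land in a boundary Sobolev space, and the clean way to do so is to run your adjoint step in the \emph{opposite} direction: by \eqref{elliptic} the range of $(\AN-\lambda)^{-k}$ lies in $H^{2k}_{\rm loc}(\ov\Omega)\cap L^2(\Omega)$, so Lemma~\ref{lem:gammaWeyl}\,(ii) gives $\ran\bigl(\gamma(\ov\lambda)^*(\AN-\lambda)^{-k}\bigr)\subset H^{2k+3/2}(\partial\Omega)$; now Lemma~\ref{le.s_emb} applies directly and yields $\gamma(\ov\lambda)^*(\AN-\lambda)^{-k}\in\sS_{\frac{n-1}{2k+3/2},\infty}\bigl(L^2(\Omega),L^2(\partial\Omega)\bigr)$, and taking adjoints (using $s_j(K)=s_j(K^*)$) gives the estimate for $(\AN-\lambda)^{-k}\gamma(\lambda)$; combined with Lemma~\ref{le:der_g_M}\,(i),(ii) this proves both lines of \eqref{SvN_dergamma}. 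This is precisely the order of the argument in the paper, whereas you prove the $\gamma$-side first (incompletely) and then pass to $\gamma(\ov\lambda)^*$.

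Your part (ii) is fine once (i) is correctly established: distributing the $k-1$ resolvent powers as $\lceil(k-1)/2\rceil$ and $\lfloor(k-1)/2\rfloor$ between the factors $\gamma(\ov\lambda)^*(\AN-\lambda)^{-\lceil(k-1)/2\rceil}$ and $(\AN-\lambda)^{-\lfloor(k-1)/2\rfloor}\ov{\gamma(\lambda)}$ and applying \eqref{prod_Sp} gives the total exponent $2k+1$, exactly as the paper's choice of putting all $k-1$ powers next to $\gamma(\ov\lambda)^*$ does; the $k=0$ case via Lemma~\ref{lem:gammaWeyl}\,(iii) and Lemma~\ref{le.s_emb} coincides with the paper's treatment.
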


\begin{proof}
(i)
Let $\lambda\in\rho(\AN)$ and $k\in\NN_0$.
It follows from \eqref{elliptic} that $\ran\bigl((\AN-\lambda)^{-k}\bigr)\subset
H^{2k}_{\rm loc}(\ov{\Omega})\cap L^2(\Omega)$
and hence from Lemma~\ref{lem:gammaWeyl}\,(ii) that
\[
  \ran\bigl(\gamma(\ov\lambda)^*(\AN-\lambda)^{-k}\bigr) \subset H^{2k+3/2}(\partial\Omega).
\]
Thus Lemma~\ref{le.s_emb} with $\cK=L^2(\Omega)$, $\Sigma=\partial\Omega$, $r_1=0$ and $r_2=2k+3/2$
implies that
\begin{equation}\label{gres_SvN}
  \gamma(\ov\lambda)^*(\AN-\lambda)^{-k}
  \in \frS_{\frac{n-1}{2k+3/2},\infty}\bigl(L^2(\Omega),L^2(\partial\Omega)\bigr).
\end{equation}
By taking the adjoint in \eqref{gres_SvN} and replacing $\lambda$ by $\ov\lambda$ we obtain
\begin{equation}\label{resg_SvN}
  (\AN-\lambda)^{-k}\gamma(\lambda)
  \in \frS_{\frac{n-1}{2k+3/2},\infty}\bigl(L^2(\partial\Omega),L^2(\Omega)\bigr).
\end{equation}
Now from Lemma~\ref{le:der_g_M}\,(i) and (ii) and \eqref{gres_SvN} and \eqref{resg_SvN}
we obtain \eqref{SvN_dergamma}.

(ii) For $k=0$ we observe that, by Lemma~\ref{lem:gammaWeyl}\,(iii), we
have $\ran M(\lambda)\subset H^1(\partial\Omega)$.
Therefore Lemma~\ref{le.s_emb} with $\cK=L^2(\partial\Omega)$, $\Sigma=\partial\Omega$, $r_1=0$
and $r_2=1$ implies that $M(\lambda)\in\frS_{n-1,\infty}(L^2(\partial\Omega))$.
For $k\ge1$ we have
\[
  \frac{d^k}{d\lambda^k}M(\lambda) = k!\,\gamma(\ov\lambda)^*(A_{\rm N}-\lambda)^{-(k-1)}\gamma(\lambda)
\]
from Lemma~\ref{le:der_g_M}\,(iii).  Hence \eqref{gres_SvN} and \eqref{resg_SvN} imply that
\[
  \frac{d^k}{d\lambda^k}M(\lambda)
  \in \sS_{\frac{n-1}{2(k-1)+3/2},\infty}\cdot\sS_{\frac{n-1}{3/2},\infty}
  = \sS_{\frac{n-1}{2k+1},\infty},
\]
where the last equality follows from \eqref{prod_Sp}.
\end{proof}

As a consequence of Theorem~\ref{thm:01} we obtain a factorization for the
resolvent difference of self-adjoint operators $A_{\rm N}$ and $A_{\rm D}$.

\begin{cor}
\label{cor:01}
Let $\{L^2(\partial\Omega), \Gamma_0,\Gamma_1\}$ be the quasi boundary triple from
Proposition~\ref{prop:qbt} with $\gamma$-field $\gamma$ and Weyl function $M$.  Then
\begin{equation*}
  (\AN-\lambda)^{-1} - (\AD-\lambda)^{-1} = \gamma(\lambda)M(\lambda)^{-1}\gamma(\ov\lambda)^*
\end{equation*}
holds for $\lambda\in\rho(\AD)\cap\rho(\AN)$.
\end{cor}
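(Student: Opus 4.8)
The claim is an instance of Theorem~\ref{thm:01} applied to the quasi boundary triple $\{L^2(\partial\Omega),\Gamma_0,\Gamma_1\}$ of Proposition~\ref{prop:qbt}. In that proposition it is recorded that $A_0 = T\upharpoonright\ker\Gamma_0 = \AN$ and $A_1 = T\upharpoonright\ker\Gamma_1 = \AD$, and both operators are self-adjoint in $L^2(\Omega)$. Thus all hypotheses of Theorem~\ref{thm:01} are met, and applying it verbatim gives
\[
  (\AN - \lambda)^{-1} - (\AD - \lambda)^{-1} = \gamma(\lambda) M(\lambda)^{-1}\gamma(\ov\lambda)^*
\]
for $\lambda\in\rho(\AD)\cap\rho(\AN)$, which is exactly the assertion. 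The only points that deserve a word are that $A_1 = \AD$ is indeed self-adjoint (so that Theorem~\ref{thm:01} is applicable rather than merely Theorem~\ref{thm:sa}) and that $M(\lambda)^{-1}$ makes sense on the relevant domain.

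For the first point one simply invokes the self-adjointness of $\AD$ already established in Proposition~\ref{prop:qbt}, which in turn rests on \cite[Theorem~7.1\,(a)]{B65}. For the second, Proposition~\ref{gammaprop}\,(iii) applies since $\lambda\in\rho(\AN)\cap\rho(\AD) = \rho(A_0)\cap\rho(A_1)$: it tells us that $M(\lambda)$ maps $\ran\Gamma_0 = L^2(\partial\Omega)$ bijectively onto $\ran\Gamma_1 = H^1(\partial\Omega)$ and that $M(\lambda)^{-1}\gamma(\ov\lambda)^*\in\cB(L^2(\Omega),L^2(\partial\Omega))$, so the right-hand side is a well-defined bounded operator on $L^2(\Omega)$. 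This is precisely the remark following Theorem~\ref{thm:01} in the case at hand.

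There is essentially no obstacle here: the corollary is a direct specialization and its proof is a one-line citation of Theorem~\ref{thm:01} together with the identifications made in Proposition~\ref{prop:qbt}. If anything, the only thing to be careful about is making explicit that $\Gamma_0$ here plays the role of the distinguished boundary map (so the Dirichlet operator, cut out by $\ker\Gamma_1$, is the "perturbed" operator $A_1$), which is the opposite labelling convention from the one used for the non-local Robin realizations later; but with the conventions fixed in Proposition~\ref{prop:qbt} the formula drops out immediately.
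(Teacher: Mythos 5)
Your proof is correct and follows exactly the paper's route: the corollary is stated there as a direct consequence of Theorem~\ref{thm:01} applied to the quasi boundary triple of Proposition~\ref{prop:qbt}, with $A_0=\AN$, $A_1=\AD$ self-adjoint and Proposition~\ref{gammaprop}\,(iii) guaranteeing that $M(\lambda)^{-1}\gamma(\ov\lambda)^*$ is a well-defined bounded operator. Your additional remarks on the labelling of the boundary maps and the applicability of Proposition~\ref{gammaprop}\,(iii) are accurate but not needed beyond what the paper already records.
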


\medskip

Next we define a family of realizations of $\cL$ in $L^2(\Omega)$ with general Robin-type
boundary conditions of the form
\begin{equation}\label{AB4}
  A_{[B]}f \defeq \cL f, \quad
  \dom A_{[B]} \defeq \bigl\{ f\in H^{3/2}_\cL(\Omega)\colon
  Bf|_{\partial\Omega} = \partial_\cL f|_{\partial\Omega}\bigr\},
\end{equation}
where $B$ is a bounded self-adjoint operator in $L^2(\partial\Omega)$.
In terms of the quasi boundary triple in Proposition~\ref{prop:qbt}
the operator  $A_{[B]}$ coincides with the one in \eqref{AB0}, which
is also equal to the restriction
\[
  T\upharpoonright\ker(B\Gamma_1-\Gamma_0).
\]
The following corollary is a consequence of Theorem~\ref{thm:sa}
since $\ran\Gamma_0=L^2(\partial\Omega)$, $\AD$ is self-adjoint and $M(\lambda)$ is
compact for $\lambda\in\rho(\AN)$ by Proposition~\ref{prop:Sp_g_M}\,(ii).

\begin{cor}
\label{thm:sa2}
Let $\{L^2(\partial\Omega),\Gamma_0,\Gamma_1\}$ be the quasi boundary triple from
Proposition~\ref{prop:qbt} with $\gamma$-field $\gamma$ and Weyl function $M$, and
let $B$ be a bounded self-adjoint operator in $L^2(\partial\Omega)$.
Then the corresponding operator $A_{[B]}$ in \eqref{AB4} is self-adjoint in $L^2(\Omega)$ and
\begin{align}
  (A_{[B]}-\lambda)^{-1} - (\AN-\lambda)^{-1}
  &= \gamma(\lambda)\bigl(I-BM(\lambda)\bigr)^{-1}B\gamma(\ov\lambda)^* \label{krein1} \\[0.5ex]
  &= \gamma(\lambda)B\bigl(I-M(\lambda)B\bigr)^{-1}\gamma(\ov\lambda)^* \label{krein2}
\end{align}
holds for $\lambda\in\rho(A_{[B]})\cap\rho(\AN)$ with
\begin{equation}\label{robin_to_neumann}
  \bigl(I-BM(\lambda)\bigr)^{-1}, \bigl(I- M(\lambda)B\bigr)^{-1}
  \in\cB\bigl(L^2(\partial\Omega)\bigr).
\end{equation}
\end{cor}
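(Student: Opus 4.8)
The plan is to obtain this corollary as a direct application of Theorem~\ref{thm:sa} to the quasi boundary triple $\{L^2(\partial\Omega),\Gamma_0,\Gamma_1\}$ of Proposition~\ref{prop:qbt}. First I would identify the relevant objects: here $A_0=T\upharpoonright\ker\Gamma_0=\AN$ and $A_1=T\upharpoonright\ker\Gamma_1=\AD$ by \eqref{ADAN}, with $\gamma$-field $\gamma$ and Weyl function $M$ as described in Proposition~\ref{prop:qbt}. Next I would check the three hypotheses of Theorem~\ref{thm:sa}: (a) $\ran\Gamma_0=L^2(\partial\Omega)$, which is part of the statement of Proposition~\ref{prop:qbt}; (b) $A_1=\AD$ is self-adjoint in $L^2(\Omega)$, again by Proposition~\ref{prop:qbt}; and (c) $M(\lambda_0)\in\sS_\infty\bigl(L^2(\partial\Omega)\bigr)$ for some $\lambda_0\in\rho(\AN)$. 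For (c) I would invoke Proposition~\ref{prop:Sp_g_M}\,(ii) with $k=0$, which gives $M(\lambda)\in\sS_{n-1,\infty}\bigl(L^2(\partial\Omega)\bigr)\subset\sS_\infty\bigl(L^2(\partial\Omega)\bigr)$ for every $\lambda\in\rho(\AN)$, and $\rho(\AN)\neq\emptyset$ since $\AN$ is self-adjoint. (Alternatively, compactness of $M(\lambda)$ follows already from Lemma~\ref{lem:gammaWeyl}\,(iii), namely $\ran M(\lambda)\subset H^1(\partial\Omega)$, together with the compact embedding $H^1(\partial\Omega)\hookrightarrow L^2(\partial\Omega)$ on the compact manifold $\partial\Omega$.)

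With these hypotheses verified, Theorem~\ref{thm:sa} applies to any bounded self-adjoint operator $B$ in $L^2(\partial\Omega)$ and yields at once: the extension $T\upharpoonright\ker(B\Gamma_1-\Gamma_0)$ is self-adjoint in $L^2(\Omega)$; the two Krein-type formulas \eqref{krein1} and \eqref{krein2} hold for $\lambda\in\rho(A_{[B]})\cap\rho(\AN)$; and $\bigl(I-BM(\lambda)\bigr)^{-1},\bigl(I-M(\lambda)B\bigr)^{-1}\in\cB\bigl(L^2(\partial\Omega)\bigr)$, which is \eqref{robin_to_neumann}. It then only remains to note that the operator $A_{[B]}$ defined in \eqref{AB4} through the explicit boundary condition $Bf|_{\partial\Omega}=\partial_\cL f|_{\partial\Omega}$ coincides with the abstract extension $T\upharpoonright\ker(B\Gamma_1-\Gamma_0)$: this is precisely the identification \eqref{AB0}, using $\Gamma_1 f=f|_{\partial\Omega}$ and $\Gamma_0 f=\partial_\cL f|_{\partial\Omega}$, as already observed in the paragraph preceding the statement.

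Since every ingredient is quoted from an earlier result, there is essentially no obstacle in this argument; the only point requiring a moment's attention is confirming that the compactness assumption on the Weyl function in Theorem~\ref{thm:sa} is met, and that is immediate from Proposition~\ref{prop:Sp_g_M}\,(ii). In writing up I would keep the proof to a couple of lines, simply pointing to Theorem~\ref{thm:sa}, Proposition~\ref{prop:qbt} and Proposition~\ref{prop:Sp_g_M}\,(ii), exactly as the sentence introducing the corollary already anticipates.
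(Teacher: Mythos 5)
Your proposal is correct and coincides with the paper's own argument: the corollary is obtained exactly by applying Theorem~\ref{thm:sa} with $A_0=\AN$, $A_1=\AD$, using $\ran\Gamma_0=L^2(\partial\Omega)$ and the self-adjointness of $\AD$ from Proposition~\ref{prop:qbt}, the compactness of $M(\lambda)$ from Proposition~\ref{prop:Sp_g_M}\,(ii), and the identification of the operator in \eqref{AB4} with the abstract extension \eqref{AB0}. Nothing further is needed.
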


\medskip

\noindent
Note that the operators in \eqref{robin_to_neumann} can be viewed as Robin-to-Neumann maps.

\subsection{Operator ideal properties and traces of resolvent power differences}
\label{ssec:ideal}
In this subsection we prove the main results of this note: estimates for the singular values
of resolvent power differences of two self-adjoint realizations of the differential expression $\cL$
subject to Dirichlet, Neumann and non-local Robin boundary conditions.

The first theorem on the difference of the resolvent powers of the Dirichlet and Neumann operator
is partially known from \cite{B62} and \cite{G84,M10}, where the proof is based on
variational principles, pseudo-differential methods or a reduction to higher order operators.
Here we give an elementary, direct proof using our approach.
In the case of first powers of the resolvents, the trace formula in item (ii) is
contained in \cite{AB09, BLL11}.  An equivalent formula can also be found in \cite{Ca02},
where it is used for the analysis of the Laplace--Beltrami operator on coupled manifolds.

\begin{thm}\label{thm1}
Let $\AD$ and $\AN$ be the self-adjoint Dirichlet and Neumann realization of $\cL$ in \eqref{ADAN} and let
$M$ be the Weyl function from Proposition~\ref{prop:qbt}.
Then the following statements hold.
\begin{itemize}\setlength{\itemsep}{1.2ex}
\item [\rm (i)]
For all $m\in\dN$ and $\lambda\in\rho(\AN)\cap\rho(\AD)$,
\begin{equation}\label{Sp0}
  (\AN-\lambda)^{-m} - (\AD-\lambda)^{-m} \in \sS_{\frac{n-1}{2m},\infty}\bigl(L^2(\Omega)\bigr).
\end{equation}
\item [\rm (ii)]
If $m > \tfrac{n-1}{2}$ then
the resolvent power difference in \eqref{Sp0} is a trace class operator and, for all $\lambda\in\rho(\AN)\cap\rho(\AD)$,
\begin{equation*}
  \tr\Bigl((\AN-\lambda)^{-m} - (\AD-\lambda)^{-m}\Bigr)
  = \frac{1}{(m-1)!}\tr\Biggl(\frac{d^{m-1}}{d\lambda^{m-1}}
  \Bigl(M(\lambda)^{-1}M'(\lambda)\Bigr)\Biggr).
\end{equation*}
\end{itemize}
\end{thm}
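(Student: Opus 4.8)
The plan is to derive both statements from the Krein-type factorization in Corollary~\ref{cor:01}, namely
\[
  (\AN-\lambda)^{-1} - (\AD-\lambda)^{-1} = \gamma(\lambda)M(\lambda)^{-1}\gamma(\ov\lambda)^*,
\]
combined with the Schatten--von Neumann estimates for the derivatives of $\gamma$ and $M$ in Proposition~\ref{prop:Sp_g_M}. For item~(i) the first step is to pass from first powers to $m$th powers. Writing $R_{\rm N}(\lambda)\defeq(\AN-\lambda)^{-1}$ and $R_{\rm D}(\lambda)\defeq(\AD-\lambda)^{-1}$, I would use the elementary identity
\[
  R_{\rm N}^m - R_{\rm D}^m
  = \sum_{j=0}^{m-1} R_{\rm N}^{\,j}\bigl(R_{\rm N}-R_{\rm D}\bigr)R_{\rm D}^{\,m-1-j},
\]
or, more conveniently for differentiation, the fact that
\[
  (\AN-\lambda)^{-m} - (\AD-\lambda)^{-m}
  = \frac{1}{(m-1)!}\frac{d^{m-1}}{d\lambda^{m-1}}\bigl(R_{\rm N}(\lambda)-R_{\rm D}(\lambda)\bigr),
\]
since $\tfrac{d^{m-1}}{d\lambda^{m-1}}(A-\lambda)^{-1} = (m-1)!\,(A-\lambda)^{-m}$. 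Applying $\tfrac{d^{m-1}}{d\lambda^{m-1}}$ to the factorization and using the product rule \eqref{rule2}, one expands into a finite sum of terms of the form $\gamma^{(p)}(\lambda)\,(\tfrac{d^q}{d\lambda^q}M(\lambda)^{-1})\,(\tfrac{d^r}{d\lambda^r}\gamma(\ov\lambda)^*)$ with $p+q+r=m-1$. Each derivative of $M(\lambda)^{-1}$ is, by iterating \eqref{rule3}, a finite sum of products of $M(\lambda)^{-1}$ and derivatives of $M(\lambda)$; the factor $M(\lambda)^{-1}$ itself is bounded on $\ran\gamma(\ov\lambda)^*$ by Proposition~\ref{gammaprop}\,(iii), and each $M^{(j)}(\lambda)\in\sS_{\frac{n-1}{2j+1},\infty}$ by Proposition~\ref{prop:Sp_g_M}\,(ii). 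Counting indices: $\gamma^{(p)}$ contributes smoothing of order $2p+\tfrac32$, the derivatives of $\gamma(\ov\lambda)^*$ of order $2r+\tfrac32$, and the $M^{(j)}$-factors inside the $q$th derivative of the inverse contribute a total of $2q$ (the $M(\lambda)^{-1}$'s being bounded), giving a total smoothing of $2(p+q+r)+3 = 2m+1$. By the product rule for ideals \eqref{prod_Sp} and Lemma~\ref{le.s_emb}, each term lies in $\sS_{\frac{n-1}{2m+1},\infty}$, which is contained in $\sS_{\frac{n-1}{2m},\infty}$; this proves \eqref{Sp0}. (In fact one gets the slightly stronger membership $\sS_{\frac{n-1}{2m+1},\infty}$, consistent with the Robin estimate, but \eqref{Sp0} is what is claimed here.)

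For item~(ii), when $m>\tfrac{n-1}{2}$ we have $\tfrac{n-1}{2m}<1$, so by \eqref{Sp_con_Spinf} the operator in \eqref{Sp0} is trace class. To compute the trace, I would start again from the differentiated factorization and use cyclicity of the trace \eqref{trace2} to move the outer $\gamma$-factor to the right. Concretely, writing the resolvent power difference as $\tfrac{1}{(m-1)!}\tfrac{d^{m-1}}{d\lambda^{m-1}}\bigl(\gamma(\lambda)M(\lambda)^{-1}\gamma(\ov\lambda)^*\bigr)$, one would like to conclude
\[
  \tr\bigl((\AN-\lambda)^{-m}-(\AD-\lambda)^{-m}\bigr)
  = \frac{1}{(m-1)!}\tr\Biggl(\frac{d^{m-1}}{d\lambda^{m-1}}\Bigl(M(\lambda)^{-1}\gamma(\ov\lambda)^*\gamma(\lambda)\Bigr)\Biggr),
\]
and then recognize $\gamma(\ov\lambda)^*\gamma(\lambda) = M'(\lambda)$ (the closure of it, by Lemma~\ref{le:der_g_M}\,(iii) with $k=1$), yielding the stated formula. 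The subtlety is that cyclicity \eqref{trace2} and interchanging the trace with $\tfrac{d^{m-1}}{d\lambda^{m-1}}$ must be justified term by term after the Leibniz expansion: in each summand $\gamma^{(p)}(\lambda)\,C_{q}(\lambda)\,(\tfrac{d^r}{d\lambda^r}\gamma(\ov\lambda)^*)$ (with $C_q$ the $q$th derivative of $M(\lambda)^{-1}$) one needs both the product and its cyclic rearrangement to be trace class, which follows because the relevant partial products already lie in $\sS_1$ by the index count above (each factor pair $\gamma^{(p)}\cdot(\text{derivative of }\gamma^*)$ gives smoothing $\ge 3>n-1$ for $n=2,3$ and in general one distributes the $M$-derivatives to balance). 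One also uses that the trace of a holomorphic $\sS_1$-valued function commutes with differentiation, which follows from continuity of $\tr$ on $\sS_1$ and the fact that difference quotients converge in $\sS_1$-norm. Recombining the Leibniz sum after the cyclic swap and using $\overline{M'(\lambda)}=\gamma(\ov\lambda)^*\overline{\gamma(\lambda)}$ gives $\tfrac{1}{(m-1)!}\tr\bigl(\tfrac{d^{m-1}}{d\lambda^{m-1}}(M(\lambda)^{-1}M'(\lambda))\bigr)$.

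The main obstacle I expect is the careful bookkeeping in the second part: after the Leibniz expansion \eqref{rule2} of $\tfrac{d^{m-1}}{d\lambda^{m-1}}\bigl(\gamma(\lambda)M(\lambda)^{-1}\gamma(\ov\lambda)^*\bigr)$, each individual term must be shown to be trace class \emph{and} its cyclic rearrangement must be shown to be trace class, so that \eqref{trace2} applies summand by summand; and one must verify that, after moving $\gamma^{(p)}(\lambda)$ past the other factors and re-summing, the result reassembles into $\tfrac{d^{m-1}}{d\lambda^{m-1}}\bigl(M(\lambda)^{-1}\gamma(\ov\lambda)^*\gamma(\lambda)\bigr)$ — this last point is essentially a second application of the product rule in the reverse direction and is routine but must be stated. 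A clean way to sidestep part of this is to prove the trace identity first for $m$ large enough that \emph{every} partial product is trace class (so all manipulations are unproblematic) and then note that both sides are the $(m-1)$st $\lambda$-derivative of the corresponding $m=1$ objects up to the factor $\tfrac{1}{(m-1)!}$, so the identity for general $m>\tfrac{n-1}{2}$ follows by analytic continuation / differentiation from a single base case — but since the claim only asserts it for $m>\tfrac{n-1}{2}$ anyway, the direct term-by-term argument is adequate. Everything else — the power-to-resolvent-derivative identity, the ideal arithmetic via \eqref{prod_Sp}, and the identification $\gamma(\ov\lambda)^*\gamma(\lambda)=M'(\lambda)$ — is immediate from the lemmas already established.
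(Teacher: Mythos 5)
Your overall route — Krein's factorization from Corollary~\ref{cor:01}, the identity $(\AN-\lambda)^{-m}-(\AD-\lambda)^{-m}=\tfrac{1}{(m-1)!}\tfrac{d^{m-1}}{d\lambda^{m-1}}\bigl((\AN-\lambda)^{-1}-(\AD-\lambda)^{-1}\bigr)$, a Leibniz expansion, weak Schatten estimates for the factors, and then cyclicity of the trace together with $\ov{M'(\lambda)}=\gamma(\ov\lambda)^*\ov{\gamma(\lambda)}$ — is exactly the paper's strategy, and your treatment of (ii) matches the paper's term-by-term cyclic swap and reassembly. However, there is a genuine flaw in your bookkeeping for (i): you treat $M(\lambda)^{-1}$, and the $M(\lambda)^{-1}$ factors produced when differentiating the inverse via \eqref{rule3}, as bounded operators that contribute nothing to the order count ("the $M(\lambda)^{-1}$'s being bounded"). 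This is false: $M(\lambda)$ is the Neumann-to-Dirichlet map, compact in $L^2(\partial\Omega)$ (Proposition~\ref{prop:Sp_g_M}\,(ii)), so $M(\lambda)^{-1}$ is an \emph{unbounded} operator; by Lemma~\ref{lem:gammaWeyl}\,(iv) it maps $H^{s+1}(\partial\Omega)$ onto $H^{s}(\partial\Omega)$, i.e.\ it costs one order of boundary regularity at every occurrence. Only the combination $M(\lambda)^{-1}\gamma(\ov\lambda)^*$ is bounded (Proposition~\ref{gammaprop}\,(iii)). With the correct accounting, a term $\gamma^{(p)}\bigl(\tfrac{d^q}{d\lambda^q}M(\lambda)^{-1}\bigr)\tfrac{d^r}{d\lambda^r}\gamma(\ov\lambda)^*$ has total smoothing $(2p+\tfrac32)+(2q-1)+(2r+\tfrac32)=2m$, not $2m+1$; your claimed stronger membership in $\sS_{\frac{n-1}{2m+1},\infty}$ is wrong and in fact contradicts the sharpness of \eqref{est2} recalled in the introduction. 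The conclusion \eqref{Sp0} itself survives, since $2m$ is exactly what is needed, but the justification as written does not: because $M(\lambda)^{-1}$ is unbounded, you cannot apply the ideal arithmetic \eqref{prod_Sp} to products containing naked $M(\lambda)^{-1}$ factors, and \eqref{rule3} (stated for $\cB$-valued functions with bounded inverses) does not directly legitimize differentiating $\lambda\mapsto M(\lambda)^{-1}$.

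The paper circumvents precisely this difficulty by bundling the problematic factor into the single bounded operator function $S(\lambda)\defeq M(\lambda)^{-1}\gamma(\ov\lambda)^*$, deriving the Riccati-type identity $S'(\lambda)=S(\lambda)(\AN-\lambda)^{-1}-S(\lambda)\gamma(\lambda)S(\lambda)$, and proving by induction the smoothing property $S^{(k)}(\lambda):H^s_{\rm loc}(\ov\Omega)\cap L^2(\Omega)\to H^{s+2k+1/2}(\partial\Omega)$, whence $S^{(k)}(\lambda)\in\sS_{\frac{n-1}{2k+1/2},\infty}$ by Lemma~\ref{le.s_emb}; then $\gamma^{(p)}S^{(q)}\in\sS_{\frac{n-1}{2m},\infty}$ for $p+q=m-1$, and the trace computation proceeds as you describe. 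To repair your argument you should either adopt this device or replace your ideal-product reasoning by explicit Sobolev mapping properties (tracking the one-order loss at each $M(\lambda)^{-1}$, and checking that ranges stay inside $\dom M(\lambda)^{-1}=H^1(\partial\Omega)$) before invoking Lemma~\ref{le.s_emb}; the rest of your proof, including part (ii), then goes through essentially as in the paper.
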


\begin{proof}
(i)
The proof of the first item is carried out in two steps. \\
\textit{Step 1.}
Let us introduce the operator function
\[
  S(\lambda) \defeq M(\lambda)^{-1}\gamma(\ov\lambda)^*, \qquad \lambda\in\rho(\AN)\cap\rho(\AD).
\]
Note that the product is well defined since
$\ran(\gamma(\ov\lambda)^*) \subset H^1(\partial\Omega) = \dom(M(\lambda)^{-1})$.
Since $\AD$ is self-adjoint, it follows from Proposition~\ref{gammaprop}\,(iii)
that $S(\lambda)$ is a bounded operator from $L^2(\Omega)$ to $L^2(\partial\Omega)$
for $\lambda\in\rho(\AN)\cap\rho(\AD)$.
We prove the following smoothing property for the derivatives of $S$:
\begin{equation}\label{Sk_smoothing}
  u\in H^s_{\rm loc}(\ov{\Omega})\cap L^2(\Omega) \quad\Rightarrow\quad
  S^{(k)}(\lambda)u\in H^{s+2k+1/2}(\partial\Omega), \qquad
  s\ge 0,\, k\in\NN_0,
\end{equation}
by induction.  Since $\gamma(\ov\lambda)^*$ maps $H^s_{\rm loc}(\ov{\Omega})\cap L^2(\Omega)$ into $H^{s+3/2}(\partial\Omega)$
for $s\ge0$ by Lemma~\ref{lem:gammaWeyl}\,(ii) and $M(\lambda)^{-1}$ maps $H^{s+3/2}(\partial\Omega)$
into $H^{s+1/2}(\partial\Omega)$ by Lemma~\ref{lem:gammaWeyl}\,(iv), relation
\eqref{Sk_smoothing} is true for $k=0$.
Now let $l\in\NN_0$ and assume that \eqref{Sk_smoothing} is true for every $k=0,1,\dots,l$.
By \eqref{rule1}, \eqref{rule3} and Lemma~\ref{le:der_g_M}\,(i), (iii) we have
\begin{align*}
  S'(\lambda)u
  &= \frac{d}{d\lambda}\big(M(\lambda)^{-1}\big)\gamma(\ov\lambda)^*u
  + M(\lambda)^{-1}\frac{d}{d\lambda}\gamma(\ov\lambda)^*u  \\[0.5ex]
  &= -M(\lambda)^{-1}M'(\lambda)M(\lambda)^{-1}\gamma(\ov\lambda)^*u + M(\lambda)^{-1}\gamma(\ov\lambda)^*(\AN-\lambda)^{-1}u  \\[0.5ex]
  &= -M(\lambda)^{-1}\gamma(\ov\lambda)^*\gamma(\lambda)M(\lambda)^{-1}\gamma(\ov\lambda)^*u
  + S(\lambda)(\AN-\lambda)^{-1}u \\[0.5ex]
  &= S(\lambda)(\AN-\lambda)^{-1}u - S(\lambda)\gamma(\lambda)S(\lambda)u
\end{align*}
for all $u\in L^2(\Omega)$.
Hence, with the help of \eqref{rule1}, \eqref{rule2} and Lemma~\ref{le:der_g_M}\,(ii),
we obtain
\begin{align}
  S^{(l+1)}(\lambda)
  =& \frac{d^l}{d\lambda^l}\Bigl(S(\lambda)(\AN-\lambda)^{-1}
  - S(\lambda)\gamma(\lambda)S(\lambda)\Bigr) \notag\\[0.5ex]
  =& \sum_{\substack{p+q=l \\[0.2ex] p,q\ge0}} \binom{l}{p}
  S^{(p)}(\lambda)\frac{d^q}{d\lambda^q}(\AN-\lambda)^{-1} \notag\\[0.5ex]
  &- \sum_{\substack{p+q+r=l \\[0.2ex] p,q,r\ge0}} \frac{l!}{p!\,q!\,r!}
  S^{(p)}(\lambda)\gamma^{(q)}(\lambda)S^{(r)}(\lambda)
  \notag\\[0.5ex]
  \label{expans_Slpl1}
  =& \sum_{\substack{p+q=l \\[0.2ex] p,q\ge0}} \frac{l!}{p!}S^{(p)}(\lambda)(\AN-\lambda)^{-(q+1)}
  \\&- \sum_{\substack{p+q+r=l \\[0.2ex] p,q,r\ge0}} \frac{l!}{p!\,r!}
  S^{(p)}(\lambda)(\AN-\lambda)^{-q}\gamma(\lambda)S^{(r)}(\lambda).\notag
\end{align}
By the induction hypothesis, the smoothing property \eqref{elliptic} and
Lemma~\ref{lem:gammaWeyl}\,(i), we have, for $s\ge0$ and $p,q\geq 0$, $p+q=l$,
\begin{align*}
  &u\in H^s_{\rm loc}(\ov{\Omega})\cap L^2(\Omega) \\[0.5ex]
  &\Longrightarrow\quad (\AN-\lambda)^{-(q+1)}u \in H^{s+2q+2}_{\rm loc}(\ov{\Omega})\cap L^2(\Omega) \\[0.5ex]
  &\Longrightarrow\quad S^{(p)}(\lambda)(\AN-\lambda)^{-(q+1)}u
  \in H^{s+2q+2+2p+1/2}(\partial\Omega) = H^{s+2(l+1)+1/2}(\partial\Omega)
\end{align*}
and for $s\ge0$ and $p,q,r\geq 0$, $p+q+r=l$,
\begin{align*}
  &u\in H^s_{\rm loc}(\ov{\Omega})\cap L^2(\Omega)\\[0.5ex]
  &\Longrightarrow\quad S^{(r)}(\lambda)u \in H^{s+2r+1/2}(\partial\Omega) \\[0.5ex]
  &\Longrightarrow\quad \gamma(\lambda)S^{(r)}(\lambda)u
    \in H^{s+2r+1/2+3/2}_{\rm loc}(\ov{\Omega})\cap L^2(\Omega)\\[0.5ex]
  &\Longrightarrow\quad (\AN-\lambda)^{-q}\gamma(\lambda)S^{(r)}(\lambda)u
    \in H^{s+2r+2+2q}_{\rm loc}(\ov{\Omega})\cap L^2(\Omega) \\[0.5ex]
  &\Longrightarrow\quad S^{(p)}(\lambda)(\AN-\lambda)^{-q}\gamma(\lambda)S^{(r)}(\lambda)u
    \in H^{s+2r+2+2q+2p+1/2}(\partial\Omega) \\[0.5ex]
  &\hspace*{42ex} = H^{s+2(l+1)+1/2}(\partial\Omega),
\end{align*}
which, together with \eqref{expans_Slpl1}, shows \eqref{Sk_smoothing} for $k=l+1$
and hence, by induction, for all $k\in\NN_0$.
Therefore, an application of Lemma~\ref{le.s_emb} yields that
\begin{equation}\label{Sk_in_Sp}
  S^{(k)}(\lambda) \in \frS_{\frac{n-1}{2k+1/2},\infty}\bigl(L^2(\Omega),L^2(\partial\Omega)\bigr),
  \qquad k\in\NN_0,\,\lambda\in\rho(\AN)\cap\rho(\AD).
\end{equation}
\textit{Step 2.}
Using Krein's formula from Corollary~\ref{cor:01} and \eqref{rule1} we can write,
for $m\in\NN$ and $\lambda\in\rho(\AN)\cap\rho(\AD)$,
\begin{align}
  (\AN-\lambda)^{-m}-(\AD-\lambda)^{-m}
  &= \frac{1}{(m-1)!}\cdot\frac{d^{m-1}}{d\lambda^{m-1}}\Bigl((\AN-\lambda)^{-1}-(\AD-\lambda)^{-1}\Bigr)
    \notag\\[0.5ex]
  &= \frac{1}{(m-1)!}\cdot\frac{d^{m-1}}{d\lambda^{m-1}}\bigl(\gamma(\lambda)S(\lambda)\bigr)
    \notag\\[0.5ex]
  &= \frac{1}{(m-1)!}\sum_{\substack{p+q=m-1 \\[0.2ex] p,q\ge0}}
  \binom{m-1}{p} \gamma^{(p)}(\lambda) S^{(q)}(\lambda).
    \label{sum578}
\end{align}
Since, by Proposition~\ref{prop:Sp_g_M}\,(i), \eqref{Sk_in_Sp} and \eqref{prod_Sp},
\begin{equation}\label{terms_in_Sp1}
  \gamma^{(p)}(\lambda)S^{(q)}(\lambda)
  \in \frS_{\frac{n-1}{2p+3/2},\infty}\cdot\frS_{\frac{n-1}{2q+1/2},\infty}
  = \frS_{\frac{n-1}{2(p+q)+2},\infty} = \frS_{\frac{n-1}{2m},\infty}
\end{equation}
for $p,q$ with $p+q=m-1$, we obtain \eqref{Sp0}.

(ii)
If $m>\tfrac{n-1}{2}$ then $\frac{n-1}{2m}<1$ and, by \eqref{Sp_con_Spinf}
and \eqref{terms_in_Sp1}, each term in the sum in \eqref{sum578} is a trace class
operator and, by a similar argument, also $S^{(q)}(\lambda)\gamma^{(p)}(\lambda)$.
Hence the operator in \eqref{Sp0} is a trace class operator, and
we can apply the trace to \eqref{sum578} and use \eqref{trace1}, \eqref{trace2} and
Lemma~\ref{le:der_g_M}\,(iii) to obtain
\begin{align*}
  & (m-1)!\tr\Bigl((\AN-\lambda)^{-m}-(\AD-\lambda)^{-m}\Bigr)
  = \tr\Biggl(\,\sum_{\substack{p+q=m-1 \\[0.2ex] p,q\ge0}}\binom{m-1}{p}
  \gamma^{(p)}(\lambda) S^{(q)}(\lambda)\Biggr) \\[0.5ex]
  &= \sum_{\substack{p+q=m-1 \\[0.2ex] p,q\ge0}}\binom{m-1}{p}
  \tr\Bigl(\gamma^{(p)}(\lambda) S^{(q)}(\lambda)\Bigr)
  = \sum_{\substack{p+q=m-1 \\[0.2ex] p,q\ge0}}\binom{m-1}{p}
  \tr\Bigl(S^{(q)}(\lambda)\gamma^{(p)}(\lambda)\Bigr) \displaybreak[0]\\[0.5ex]
  &= \tr\Biggl(\,\sum_{\substack{p+q=m-1 \\[0.2ex] p,q\ge0}}\binom{m-1}{p}
  S^{(q)}(\lambda)\gamma^{(p)}(\lambda)\Biggr)
  = \tr\biggl(\frac{d^{m-1}}{d\lambda^{m-1}}\Bigl(S(\lambda)\gamma(\lambda)\Bigr)\biggr) \\[0.5ex]
  &= \tr\biggl(\frac{d^{m-1}}{d\lambda^{m-1}}
  \Bigl(M(\lambda)^{-1}\gamma(\ov\lambda)^*\gamma(\lambda)\Bigr)\biggr)
  = \tr\biggl(\frac{d^{m-1}}{d\lambda^{m-1}}
  \Bigl(M(\lambda)^{-1}M'(\lambda)\Bigr)\biggr),
\end{align*}
which finishes the proof.
\end{proof}


In the following theorem, which contains the main result of this note, we prove
weak Schatten--von Neumann estimates for resolvent power differences of two
self-adjoint realizations $A_{[B_1]}$ and $A_{[B_2]}$ of $\cL$ with Robin and more general
non-local boundary conditions.  In this situation the estimates are better than for the
pair of Dirichlet and Neumann realizations in Theorem~\ref{thm1}.  For the first powers of
the resolvents this was already observed in \cite{BLLLP10,BLL11} and \cite{G11-2}.
In the special important case when the resolvent power difference is a trace class operator
we express its trace as the trace of a certain operator acting on the boundary $\partial\Omega$,
which is given in terms of the Weyl function and the operators $B_1$ and $B_2$ in the
boundary conditions; cf.\ \cite[Corollary~4.12]{BLL11} for the case of first powers
and \cite{BMN08,GZ12} for one-dimensional Schr\"odinger operators and other
finite-dimensional situations.  We also mention that the special case of
classical Robin boundary conditions, where $B_1$ and $B_2$ are multiplication operators
with real-valued $L^\infty$-functions is contained in the theorem.

\begin{thm}\label{thm2}
Let $\{L^2(\partial\Omega),\Gamma_0,\Gamma_1\}$ be the quasi boundary triple from
Proposition~\ref{prop:qbt} with Weyl function $M$ and let $\AN$ be the self-adjoint Neumann
operator in \eqref{ADAN}.  Moreover, let $B_1$ and $B_2$ be bounded self-adjoint
operators in $L^2(\partial\Omega)$, define $A_{[B_1]}$ and $A_{[B_2]}$ as in \eqref{AB4} and set
\[
  t \defeq \begin{cases}
    \dfrac{n-1}{s} & \text{if}\; B_1-B_2 \in \frS_{s,\infty}(L^2(\partial\Omega)) \text{ for some }s>0, \\[1ex]
    0 & \text{otherwise}.
  \end{cases}
\]
Then the following statements hold.
\begin{itemize}\setlength{\itemsep}{1.2ex}
\item[\rm (i)] For all $m\in\dN$ and $\lambda\in\rho(A_{[B_1]})\cap\rho(A_{[B_2]})$,
\begin{equation}\label{Sp}
  (A_{[B_1]}-\lambda)^{-m} - (A_{[B_2]}-\lambda)^{-m} \in \sS_{\frac{n-1}{2m+t+1},\infty}\bigl(L^2(\Omega)\bigr).
\end{equation}
\item[\rm (ii)] If $m>\tfrac{n-t}{2}-1$
then the resolvent power difference in \eqref{Sp} is a trace class operator and,
for all $\lambda\in\rho(A_{[B_1]})\cap\rho(A_{[B_2]})\cap\rho(A_{\rm N})$,
\begin{equation}\label{tr}
  \tr\Bigl((A_{[B_1]}-\lambda)^{-m} - (A_{[B_2]}-\lambda)^{-m}\Bigr)
  = \frac{1}{(m-1)!}\tr\biggl(\frac{d^{m-1}}{d\lambda^{m-1}}\Bigl(U(\lambda)M'(\lambda)\Bigr)\biggr)
\end{equation}
where  $U(\lambda) \defeq \bigl(I-B_1 M(\lambda)\bigr)^{-1}(B_1-B_2)\bigl(I- M(\lambda)B_2\bigr)^{-1}$.
\end{itemize}
\end{thm}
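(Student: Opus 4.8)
The strategy is to imitate the proof of Theorem~\ref{thm1}, but now with two Robin-type resolvents compared through the Neumann operator as an intermediate, and with the extra smoothing coming from the factor $B_1-B_2$. The starting point is the telescoping identity
\[
  (A_{[B_1]}-\lambda)^{-1} - (A_{[B_2]}-\lambda)^{-1}
  = \bigl((A_{[B_1]}-\lambda)^{-1} - (\AN-\lambda)^{-1}\bigr)
  - \bigl((A_{[B_2]}-\lambda)^{-1} - (\AN-\lambda)^{-1}\bigr),
\]
and then the Krein-type formulae \eqref{krein1}--\eqref{krein2} from Corollary~\ref{thm:sa2}: writing the first bracket with \eqref{krein1} and the second with \eqref{krein2}, one obtains after elementary algebra a single factorized expression of the form
\[
  (A_{[B_1]}-\lambda)^{-1} - (A_{[B_2]}-\lambda)^{-1}
  = \gamma(\lambda)\,\bigl(I-B_1 M(\lambda)\bigr)^{-1}(B_1-B_2)\bigl(I-M(\lambda)B_2\bigr)^{-1}\gamma(\ov\lambda)^*
  = \gamma(\lambda)\,U(\lambda)\,\gamma(\ov\lambda)^*,
\]
valid for $\lambda\in\rho(A_{[B_1]})\cap\rho(A_{[B_2]})\cap\rho(\AN)$, and then extended to all $\lambda$ in $\rho(A_{[B_1]})\cap\rho(A_{[B_2]})$ by analytic continuation (both sides are analytic $\cB(L^2(\Omega))$-valued functions there; the right-hand side requires checking that the apparent singularities at $\rho(\AN)^{c}$ are removable, which follows from the identity itself). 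Taking $(m-1)$ derivatives via \eqref{rule1} and \eqref{rule2} then gives
\[
  (A_{[B_1]}-\lambda)^{-m} - (A_{[B_2]}-\lambda)^{-m}
  = \frac{1}{(m-1)!}\,\frac{d^{m-1}}{d\lambda^{m-1}}\bigl(\gamma(\lambda)U(\lambda)\gamma(\ov\lambda)^*\bigr),
\]
a sum of terms $\gamma^{(p)}(\lambda)\,U^{(q)}(\lambda)\,\bigl(\tfrac{d^r}{d\lambda^r}\gamma(\ov\lambda)^*\bigr)$ with $p+q+r=m-1$.

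\textbf{The Schatten estimates.} The key is to pin down the ideal containing $U^{(q)}(\lambda)$ and its smoothing behaviour. Since $\bigl(I-B_1M(\lambda)\bigr)^{-1}$ and $\bigl(I-M(\lambda)B_2\bigr)^{-1}$ are bounded and their derivatives can be expanded using \eqref{rule3} with derivatives of $M$ (which lie in $\sS_{\frac{n-1}{2j+1},\infty}$ by Proposition~\ref{prop:Sp_g_M}\,(ii)), while $B_1-B_2$ contributes a factor in $\sS_{s,\infty}=\sS_{\frac{n-1}{t},\infty}$ (or is merely bounded if $t=0$), a bookkeeping of the orders through \eqref{prod_Sp} shows that the middle factor $U^{(q)}(\lambda)$, when composed on the right with $(\AN-\lambda)^{-q'}$ to absorb the elliptic smoothing, maps $H^\sigma_{\rm loc}(\ov\Omega)\cap L^2(\Omega)$ into a Sobolev space on $\partial\Omega$ gaining $t$ more derivatives than in the $B_1=B_2$ case. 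Concretely, one shows the analogue of \eqref{Sk_smoothing}: the relevant composite operator built from $\gamma(\ov\lambda)^*$, powers of $(\AN-\lambda)^{-1}$, derivatives of $M$, and the factor $B_1-B_2$ satisfies a smoothing estimate where $B_1-B_2\in\sS_{\frac{n-1}{t},\infty}$ replaces one boundary-to-boundary map and improves the effective order by $t$ (using Lemma~\ref{le.s_emb} together with interpolation, since $B_1-B_2$ need not be smoothing, only in a Schatten class). Tracking all indices, each summand $\gamma^{(p)}U^{(q)}\bigl(\tfrac{d^r}{d\lambda^r}\gamma(\ov\lambda)^*\bigr)$ lands in $\sS_{\frac{n-1}{2m+t+1},\infty}$ — the $\tfrac32$ from each $\gamma$-factor contributes $3$, the internal $M$-derivatives and elliptic powers contribute $2q$ in total, and $B_1-B_2$ contributes $t$, for a total order $2(m-1)+3+t=2m+t+1$ — which yields \eqref{Sp} by \eqref{prod_Sp}.

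\textbf{The trace formula.} For (ii), the condition $m>\tfrac{n-t}{2}-1$ is exactly $\tfrac{n-1}{2m+t+1}<1$, so each summand is trace class and \eqref{Sp} gives a trace class operator. Applying $\tr$, using linearity \eqref{trace1} and the cyclicity \eqref{trace2} to move the rightmost $\tfrac{d^r}{d\lambda^r}\gamma(\ov\lambda)^*$ to the front past $\gamma^{(p)}$, one reassembles the sum as $\tr\bigl(\tfrac{d^{m-1}}{d\lambda^{m-1}}(U(\lambda)\gamma(\ov\lambda)^*\gamma(\lambda))\bigr)$; this requires a small additional check, as in Theorem~\ref{thm1}\,(ii), that the cyclically permuted products $S^{(q)}\gamma^{(p)}$-type terms are themselves trace class. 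Finally, since $\ov{M'(\lambda)}=\gamma(\ov\lambda)^*\ov{\gamma(\lambda)}$ by Lemma~\ref{le:der_g_M}\,(iii) (and $M'(\lambda)$ is everywhere defined here because $\ran\Gamma_0=L^2(\partial\Omega)$, so $M(\lambda)\in\cB(L^2(\partial\Omega))$), we have $\gamma(\ov\lambda)^*\gamma(\lambda)=M'(\lambda)$, giving \eqref{tr}.

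\textbf{Main obstacle.} The routine part is the Krein-formula algebra and the differentiation via \eqref{rule1}--\eqref{rule3}. The genuinely delicate point is the Schatten-class bookkeeping for $U^{(q)}(\lambda)$: unlike $M(\lambda)$, the perturbation $B_1-B_2$ is only assumed to lie in a weak Schatten ideal and carries no Sobolev smoothing, so one cannot simply chain the mapping properties of Lemma~\ref{lem:gammaWeyl} as in the proof of Theorem~\ref{thm1}. One must instead combine the smoothing of the $\gamma$-fields and of the $M$-derivatives (which \emph{do} gain Sobolev regularity) with the abstract ideal property of $B_1-B_2$, using the product rule \eqref{prod_Sp} for the weak ideals and, where a factor is split between a bounded part and a Schatten part, an interpolation argument; getting the index $2m+t+1$ right, and in particular verifying that the presence of a non-smoothing but Schatten-class factor does not destroy the per-summand estimate after differentiation, is where the real work lies.
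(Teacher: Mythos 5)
Your overall architecture is the same as the paper's: apply \eqref{krein1} to $A_{[B_1]}$ and \eqref{krein2} to $A_{[B_2]}$, subtract to get $(A_{[B_1]}-\lambda)^{-1}-(A_{[B_2]}-\lambda)^{-1}=\gamma(\lambda)U(\lambda)\gamma(\ov\lambda)^*$, differentiate $m-1$ times with \eqref{rule1}--\eqref{rule2}, estimate each summand $\gamma^{(p)}(\lambda)U^{(q)}(\lambda)\tfrac{d^r}{d\lambda^r}\gamma(\ov\lambda)^*$ in a weak Schatten class, and for (ii) use \eqref{trace1}, \eqref{trace2} and $\gamma(\ov\lambda)^*\gamma(\lambda)=M'(\lambda)$ (Lemma~\ref{le:der_g_M}\,(iii)); your index count $2p+\tfrac32+2q+t+2r+\tfrac32=2m+t+1$ is exactly the paper's. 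However, at the step you yourself flag as the main obstacle --- establishing $U^{(q)}(\lambda)\in\sS_{\frac{n-1}{2q+t},\infty}$ --- your sketch is not a proof and, as written, heads in the wrong direction. Composing $U^{(q)}(\lambda)$ ``on the right with $(\AN-\lambda)^{-q'}$'' does not typecheck: $U(\lambda)$ acts in the boundary space $L^2(\partial\Omega)$, while $(\AN-\lambda)^{-1}$ acts in $L^2(\Omega)$, and since $B_1-B_2$ has no Sobolev mapping property whatsoever, there is no analogue of the smoothing statement \eqref{Sk_smoothing} to prove for $U$; the ``interpolation argument'' you invoke to repair this is precisely the missing content.

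The point is that no Sobolev analysis of $U$ (and no interpolation) is needed at all. Set $T(\lambda)=\bigl(I-BM(\lambda)\bigr)^{-1}$, which is bounded by Corollary~\ref{thm:sa2}; then $T'(\lambda)=T(\lambda)BM'(\lambda)T(\lambda)$ by \eqref{rule3}, and an induction using \eqref{rule2}, Proposition~\ref{prop:Sp_g_M}\,(ii) (the derivatives $M^{(k)}$ already lie in $\sS_{\frac{n-1}{2k+1},\infty}$) and the product rule \eqref{prod_Sp}, together with the fact that these classes are ideals in $\cB(L^2(\partial\Omega))$, gives $T^{(k)}(\lambda)\in\sS_{\frac{n-1}{2k+1},\infty}$ for $k\ge1$, hence $T^{(k)}(\lambda)\in\frA_{2k}$ for all $k\ge0$ with the convention $\frA_0=\cB(L^2(\partial\Omega))$, $\frA_r=\sS_{\frac{n-1}{r},\infty}$ for $r>0$; the same holds for $\bigl(I-M(\lambda)B\bigr)^{-1}$. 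Then Leibniz applied to $U=T_1(B_1-B_2)T_2$ yields $U^{(q)}(\lambda)\in\frA_{2p'}\cdot\frA_t\cdot\frA_{2q'}=\frA_{2q+t}$ summand by summand, which is the per-term estimate your bookkeeping needs, and the rest of your argument (including the trace-class check for the cyclically permuted products and the final identification with $U(\lambda)M'(\lambda)$) goes through as in the paper. So the gap is localized but real: the key estimate on $U^{(q)}$ must be derived from the operator-ideal calculus alone, and your proposed smoothing-plus-interpolation route does not supply a valid argument for it.
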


\begin{proof}
(i)
In order to shorten notation and to avoid the distinction of several cases,
we set
\[
  \frA_r \defeq \begin{cases}
    \frS_{\frac{n-1}{r},\infty}\bigl(L^2(\partial\Omega)\bigr) & \text{if}\; r>0, \\[1ex]
    \cB\bigl(L^2(\partial\Omega)\bigr) & \text{if}\; r=0.
  \end{cases}
\]
It follows from \eqref{prod_Sp} and the fact that $\frS_{p,\infty}(L^2(\partial\Omega))$, $p>0$,
is an ideal in $\cB(L^2(\partial\Omega))$ that
\begin{equation}\label{prodAr}
  \frA_{r_1}\cdot\frA_{r_2} = \frA_{r_1+r_2}, \qquad r_1,r_2\ge0.
\end{equation}
Moreover, the assumption on the difference of $B_1$ and $B_2$ yields
\begin{equation}\label{B1B2inAt}
  B_1-B_2 \in \frA_t.
\end{equation}
The proof of item (i) is divided into three steps. \\[0.5ex]
\textit{Step 1.}
Let $B$ be a bounded self-adjoint operator in $L^2(\partial\Omega)$ and set
\[
  T(\lambda) \defeq \bigl(I-BM(\lambda)\bigr)^{-1},
  \qquad \lambda\in\rho(A_{[B]})\cap\rho(\AN),
\]
where $T(\lambda)\in\cB(L^2(\partial\Omega))$ by Corollary~\ref{thm:sa2}.
We show that
\begin{equation}\label{TkinAk}
  T^{(k)}(\lambda) \in \frA_{2k+1}, \qquad k\in\NN,
\end{equation}
by induction.  Relation \eqref{rule3} implies that
\begin{equation}\label{Tder}
  T'(\lambda) = T(\lambda)BM'(\lambda)T(\lambda),
\end{equation}
which is in $\frA_3$ by Proposition~\ref{prop:Sp_g_M}\,(ii).
Let $l\in\NN$ and assume that \eqref{TkinAk} is true for every $k=1,\dots,l$, which
implies in particular that
\begin{equation}\label{TkinA2k}
  T^{(k)}(\lambda) \in \frA_{2k}, \qquad k=0,\dots,l.
\end{equation}
Then
\[
  T^{(l+1)}(\lambda)
  =\frac{d^l}{d\lambda^l}\Bigl(T(\lambda)BM'(\lambda)T(\lambda)\Bigr)
  =\sum_{\substack{p+q+r=l \\[0.2ex] p,q,r\ge0}}\frac{l!}{p!\,q!\,r!}
  T^{(p)}(\lambda)BM^{(q+1)}(\lambda)T^{(r)}(\lambda)
\]
by \eqref{Tder} and \eqref{rule2}.  Relation \eqref{TkinA2k}, the boundedness of $B$,
Proposition~\ref{prop:Sp_g_M}\,(ii) and \eqref{prodAr} imply that
\[
  T^{(p)}(\lambda)BM^{(q+1)}(\lambda)T^{(r)}(\lambda)
  \in \frA_{2p}\cdot\frA_{2(q+1)+1}\cdot\frA_{2r}
  = \frA_{2(l+1)+1}
\]
since $p+q+r=l$.  This shows \eqref{TkinAk} for $k=l+1$ and hence, by induction,
for all $k\in\NN$.
Since $T(\lambda)\in\cB(L^2(\partial\Omega))$, we have
\begin{equation}\label{TkinAk_weak}
  T^{(k)}(\lambda) \in \frA_{2k}, \qquad k\in\NN_0,\;\lambda\in\rho(\AN),
\end{equation}
and by similar considerations also
\begin{equation}\label{TkinAk_rev}
  \frac{d^k}{d\lambda^k}\bigl(I-M(\lambda)B\bigr)^{-1} \in \frA_{2k},
  \qquad k\in\NN_0,\;\lambda\in\rho(\AN).
\end{equation}
\textit{Step 2.}
With $B_1$, $B_2$ as in the statement of the theorem set
\[
  T_1(\lambda) \defeq \bigl(I-B_1M(\lambda)\bigr)^{-1}\quad\text{and}\quad
  T_2(\lambda) \defeq \bigl(I-M(\lambda)B_2\bigr)^{-1}
\]
for $\lambda\in\rho(A_{[B_1]})\cap\rho(A_{[B_2]})\cap\rho(\AN)$.
We can write $U(\lambda)=T_1(\lambda)(B_1-B_2)T_2(\lambda)$ and hence
\[
  U^{(k)}(\lambda) = \frac{d^k}{d\lambda^k}\Bigl(T_1(\lambda)(B_1-B_2)T_2(\lambda)\Bigr)
  = \sum_{\substack{p+q=k \\[0.2ex] p,q\ge0}}\binom{k}{p}
  T_1^{(p)}(\lambda)(B_1-B_2)T_2^{(q)}(\lambda).
\]
By \eqref{TkinAk_weak}, \eqref{TkinAk_rev} and \eqref{B1B2inAt},
each term in the sum satisfies
\[
  T_1^{(p)}(\lambda)(B_1-B_2)T_2^{(q)}(\lambda)
  \in \frA_{2p}\cdot\frA_t\cdot\frA_{2q} = \frA_{2k+t},
\]
and hence
\begin{equation}\label{UinAk}
  U^{(k)}(\lambda)\in\frA_{2k+t}, \qquad k\in\NN_0,\;\lambda\in\rho(\AN).
\end{equation}

\textit{Step 3.}
By applying \eqref{krein1} to $A_{[B_1]}$ and \eqref{krein2} to $A_{[B_2]}$
and taking the difference we obtain that, for
$\lambda\in\rho(A_{[B_1]})\cap\rho(A_{[B_2]})\cap\rho(\AN)$,
\begin{align*}
  & (A_{[B_1]}-\lambda)^{-1} - (A_{[B_2]}-\lambda)^{-1} \\[0.5ex]
  &= \gamma(\lambda)\Bigl[\bigl(I-B_1M(\lambda)\bigr)^{-1}B_1
  -B_2\bigl(I-M(\lambda)B_2\bigr)^{-1}\Bigr]\gamma(\ov\lambda)^* \displaybreak[0]\\[0.5ex]
  &= \gamma(\lambda)\Bigl[\bigl(I-B_1M(\lambda)\bigr)^{-1}B_1
  \bigl(I-M(\lambda)B_2\bigr)\bigl(I-M(\lambda)B_2\bigr)^{-1} \\[0.5ex]
  &\quad - \bigl(I-B_1M(\lambda)\bigr)^{-1}\bigl(I-B_1M(\lambda)\bigr)B_2
  \bigl(I-M(\lambda)B_2\bigr)^{-1}\Bigr]\gamma(\ov\lambda)^* \\[0.5ex]
  &= \gamma(\lambda)\Bigl[\bigl(I-B_1M(\lambda)\bigr)^{-1}(B_1-B_2)
  \bigl(I-M(\lambda)B_2\bigr)^{-1}\Bigr]\gamma(\ov\lambda)^*
  = \gamma(\lambda)U(\lambda)\gamma(\ov\lambda)^*.
\end{align*}
Taking derivatives we get, for $m\in\NN$,
\begin{align}
  & (A_{[B_1]}-\lambda)^{-m} - (A_{[B_2]}-\lambda)^{-m} \notag\\[0.5ex]
  &= \frac{1}{(m-1)!}\cdot\frac{d^{m-1}}{d\lambda^{m-1}}
  \Bigl((A_{[B_1]}-\lambda)^{-1} - (A_{[B_2]}-\lambda)^{-1}\Bigr) \notag\displaybreak[0]\\[0.5ex]
  &= \frac{1}{(m-1)!}\cdot\frac{d^{m-1}}{d\lambda^{m-1}}
  \Bigl(\gamma(\lambda)U(\lambda)\gamma(\ov\lambda)^*\Bigr) \notag\\[0.5ex]
  &= \frac{1}{(m-1)!}\sum_{\substack{p+q+r=m-1 \\[0.2ex] p,q,r\ge0}}\frac{(m-1)!}{p!\,q!\,r!}
  \gamma^{(p)}(\lambda)U^{(q)}(\lambda)\frac{d^r}{d\lambda^r}\gamma(\ov\lambda)^*.
  \label{594}
\end{align}
By Proposition~\ref{prop:Sp_g_M}\,(i) and \eqref{UinAk}, each term in the sum satisfies
\begin{equation}\label{terms_in_Sp2}
  \gamma^{(p)}(\lambda)U^{(q)}(\lambda)\frac{d^r}{d\lambda^r}\gamma(\ov\lambda)^*
  \in \frS_{\frac{n-1}{2p+3/2},\infty}\cdot\frS_{\frac{n-1}{2q+t},\infty}
  \cdot\frS_{\frac{n-1}{2r+3/2},\infty}
  = \frS_{\frac{n-1}{2m+t+1},\infty},
\end{equation}
which proves \eqref{Sp}.

(ii)
If $m>\frac{n-t}{2}-1$ then $\frac{n-1}{2m+t+1}<1$ and, by \eqref{Sp_con_Spinf}
and \eqref{terms_in_Sp2}, all terms in the sum in \eqref{594}
are trace class operators, and the same is true if we change the order in the product in \eqref{terms_in_Sp2}.
Hence we can apply the trace to the expression in \eqref{594} and
use \eqref{trace1}, \eqref{trace2} and Lemma~\ref{le:der_g_M}\,(iii) to obtain
\begin{align*}
  & (m-1)!\tr\Bigl((A_{[B_1]}-\lambda)^{-m} - (A_{[B_2]}-\lambda)^{-m}\Bigr) \\[0.5ex]
  &= \tr\Biggl(\,\sum_{\substack{p+q+r=m-1 \\[0.2ex] p,q,r\ge0}}\frac{(m-1)!}{p!\,q!\,r!}
  \gamma^{(p)}(\lambda)U^{(q)}(\lambda)\frac{d^r}{d\lambda^r}\gamma(\ov\lambda)^*\Biggr) \\[0.5ex]
  &= \sum_{\substack{p+q+r=m-1 \\[0.2ex] p,q,r\ge0}} \frac{(m-1)!}{p!\,q!\,r!}
  \tr\Bigl(\gamma^{(p)}(\lambda)U^{(q)}(\lambda)\frac{d^r}{d\lambda^r}\gamma(\ov\lambda)^*\Bigr) \\[0.5ex]
  &= \sum_{\substack{p+q+r=m-1 \\[0.2ex] p,q,r\ge0}} \frac{(m-1)!}{p!\,q!\,r!}
  \tr\biggl(U^{(q)}(\lambda)\Bigl(\frac{d^r}{d\lambda^r}\gamma(\ov\lambda)^*\Bigr)
  \gamma^{(p)}(\lambda)\biggr) \displaybreak[0]\\[0.5ex]
  &= \tr\Biggl(\,\sum_{\substack{p+q+r=m-1 \\[0.2ex] p,q,r\ge0}}\frac{(m-1)!}{p!\,q!\,r!}
  U^{(q)}(\lambda)\Bigl(\frac{d^r}{d\lambda^r}\gamma(\ov\lambda)^*\Bigr)
  \gamma^{(p)}(\lambda)\Biggr) \\[0.5ex]
  &= \tr\biggl(\frac{d^{m-1}}{d\lambda^{m-1}}\Bigl(U(\lambda)\gamma(\ov\lambda)^*
  \gamma(\lambda)\Bigr)\biggr)
  = \tr\biggl(\frac{d^{m-1}}{d\lambda^{m-1}}\Bigl(U(\lambda)M'(\lambda)\Bigr)\biggr),
\end{align*}
which shows \eqref{tr}.
\end{proof}

\begin{rem}
The statements of Theorem~\ref{thm2} remain true if $A$ is an arbitrary closed
symmetric operator in a Hilbert space $\cH$ and $\{\cG,\Gamma_0,\Gamma_1\}$ a
quasi boundary triple for $A^*$ such that $\ran\Gamma_0=\cG$ and the statements
of Proposition~\ref{prop:Sp_g_M} are true with $L^2(\Omega)$ and $L^2(\partial\Omega)$
replaced by $\cH$ and $\cG$, respectively.
\end{rem}

As a special case of the last theorem let us consider the situation when $B_1 = B$
and $B_2 = 0$, where $B$ is a bounded self-adjoint operator in $L^2(\partial\Omega)$.
This immediately leads to the following corollary.

\begin{cor}\label{cor.robin_neumann}
Let $\{L^2(\partial\Omega),\Gamma_0,\Gamma_1\}$ be the quasi boundary triple from
Proposition~\ref{prop:qbt} with Weyl function $M$ and let $\AN$ be the self-adjoint
Neumann operator in \eqref{ADAN}.  Moreover, let $B$ be a bounded self-adjoint
operator in $L^2(\partial\Omega)$, define $A_{[B]}$ as in \eqref{AB4} and set
\[
  t \defeq \begin{cases}
    \dfrac{n-1}{s} & \text{if}\; B \in \frS_{s,\infty}(L^2(\partial\Omega)) \text{ for some }s>0, \\[1ex]
    0 & \text{otherwise}.
  \end{cases}
\]
Then the following statements hold.
\begin{itemize}\setlength{\itemsep}{1.2ex}
\item [\rm (i)] For all $m\in\dN$ and $\lambda\in\rho(A_{[B]})\cap\rho(\AN)$,
\begin{equation*}
\label{Sp4}
  (A_{[B]}-\lambda)^{-m} - (\AN-\lambda)^{-m}
  \in\sS_{\frac{n-1}{2m+t+1},\infty}\bigl(L^2(\Omega)\bigr),
\end{equation*}
\item [\rm (ii)] If $m>\tfrac{n-t}{2}-1$ then the resolvent power
difference in \eqref{Sp3} is a trace class operator and, for
all $\lambda\in\rho(A_{[B]})\cap\rho(\AN)$,
\begin{equation*}
\label{tr4}
\begin{aligned}
  &\tr\Bigl((A_{[B]}-\lambda)^{-m} - (\AN-\lambda)^{-m}\Bigr) \\[0.5ex]
  &\qquad= \frac{1}{(m-1)!}\tr\Biggl(\frac{d^{m-1}}{d\lambda^{m-1}}
  \Bigl(\big(I- BM(\lambda)\big)^{-1}BM'(\lambda)\Bigr)\Biggr).
\end{aligned}
\end{equation*}
\end{itemize}
\end{cor}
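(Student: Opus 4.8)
The plan is to derive Corollary~\ref{cor.robin_neumann} as the special case $B_1=B$, $B_2=0$ of Theorem~\ref{thm2}, which requires almost no new work. First I would observe that $A_{[B_2]}=A_{[0]}=\AN$ by the very definition \eqref{AB4}, since $B_2=0$ turns the boundary condition into $\partial_\cL f|_{\partial\Omega}=0$, which is precisely $\ker\Gamma_0$; hence $A_{[0]}=T\upharpoonright\ker\Gamma_0=\AN$ by Proposition~\ref{prop:qbt}. Consequently $\rho(A_{[B_1]})\cap\rho(A_{[B_2]})=\rho(A_{[B]})\cap\rho(\AN)$, and moreover this intersection is automatically contained in $\rho(\AN)$, so the extra requirement ``$\lambda\in\rho(\AN)$'' appearing in Theorem~\ref{thm2}\,(ii) is vacuous here; this is why the statement of the corollary can afford the cleaner index set $\rho(A_{[B]})\cap\rho(\AN)$.

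Second, I would note that the hypothesis on $B_1-B_2$ in Theorem~\ref{thm2} becomes the hypothesis on $B_1-B_2=B-0=B$, so the quantity $t$ defined in the corollary is exactly the $t$ of Theorem~\ref{thm2}. Item~(i) of the corollary is then literally \eqref{Sp} with $B_2=0$: the operator $(A_{[B]}-\lambda)^{-m}-(\AN-\lambda)^{-m}$ lies in $\sS_{\frac{n-1}{2m+t+1},\infty}(L^2(\Omega))$. For item~(ii), the condition $m>\tfrac{n-t}{2}-1$ is the same as in Theorem~\ref{thm2}\,(ii), so the resolvent power difference is trace class and \eqref{tr} applies; it remains only to simplify $U(\lambda)$. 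With $B_1=B$ and $B_2=0$ one has $\bigl(I-M(\lambda)B_2\bigr)^{-1}=(I-0)^{-1}=I$, hence
\[
  U(\lambda)=\bigl(I-BM(\lambda)\bigr)^{-1}(B-0)\,I=\bigl(I-BM(\lambda)\bigr)^{-1}B,
\]
and substituting this into \eqref{tr} gives exactly the claimed formula for $\tr\bigl((A_{[B]}-\lambda)^{-m}-(\AN-\lambda)^{-m}\bigr)$.

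There is essentially no obstacle here; the only point deserving a word of care is the typographical cross-reference to ``\eqref{Sp3}'' in the statement of item~(ii), which should be read as the displayed inclusion in item~(i) of the corollary (the analogue of \eqref{Sp}). Everything else is a direct specialization: the factorization $(A_{[B]}-\lambda)^{-1}-(\AN-\lambda)^{-1}=\gamma(\lambda)\bigl(I-BM(\lambda)\bigr)^{-1}B\gamma(\ov\lambda)^*$ is already recorded in \eqref{krein1} of Corollary~\ref{thm:sa2}, so one could alternatively give a self-contained one-line derivation by differentiating \eqref{krein1} $m-1$ times, applying \eqref{rule2}, invoking Proposition~\ref{prop:Sp_g_M} together with \eqref{TkinAk_weak}, \eqref{prod_Sp} and Lemma~\ref{le.s_emb} for the Schatten estimate, and then using \eqref{trace1}, \eqref{trace2} and Lemma~\ref{le:der_g_M}\,(iii) for the trace identity — but since all of this is subsumed verbatim in the proof of Theorem~\ref{thm2}, the cleanest route is simply to cite that theorem and perform the substitution $B_2=0$.
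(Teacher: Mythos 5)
Your proposal is correct and coincides with the paper's own route: the corollary is obtained exactly as the special case $B_1=B$, $B_2=0$ of Theorem~\ref{thm2}, using $A_{[0]}=T\upharpoonright\ker\Gamma_0=\AN$ and the simplification $U(\lambda)=\bigl(I-BM(\lambda)\bigr)^{-1}B$. Your remarks on the redundancy of the extra condition $\lambda\in\rho(\AN)$ and on the cross-reference to \eqref{Sp3} are also accurate.
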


\medskip

The following theorem, where we compare operators with non-local and Dirichlet boundary conditions,
is a consequence of Theorems~\ref{thm1} and \ref{thm2}.

\begin{thm}\label{thm3}
Let $\{L^2(\partial\Omega),\Gamma_0,\Gamma_1\}$ be the quasi boundary triple from
Proposition~\ref{prop:qbt} with Weyl function $M$ and let $\AD$ be the self-adjoint
Dirichlet operator in \eqref{ADAN}.  Moreover, let $B$ be a bounded self-adjoint
operator in $L^2(\partial\Omega)$ and define $A_{[B]}$ as in \eqref{AB4}.
Then the following statements hold.
\begin{itemize}\setlength{\itemsep}{1.2ex}
\item [\rm (i)] For all $m\in\dN$ and $\lambda\in\rho(A_{[B]})\cap\rho(\AD)$,
\begin{equation}
\label{Sp3}
  (A_{[B]}-\lambda)^{-m} - (\AD-\lambda)^{-m}
  \in\sS_{\frac{n-1}{2m},\infty}\big(L^2(\Omega)\big).
\end{equation}
\item [\rm (ii)] If $m>\tfrac{n-1}{2}$ then the resolvent power
difference in \eqref{Sp3} is a trace class operator and, for
all $\lambda\in\rho(A_{[B]})\cap\rho(\AD)\cap\rho(\AN)$,
\begin{equation}
\label{tr3}
  \tr\Bigl((A_{[B]}-\lambda)^{-m} - (\AD-\lambda)^{-m}\Big)
  = \frac{1}{(m-1)!}\tr\biggl(\frac{d^{m-1}}{d\lambda^{m-1}}
  \Big(V(\lambda)M'(\lambda)\Big)\biggr)
\end{equation}
where $V(\lambda) \defeq \bigl(I-M(\lambda)B\bigr)^{-1}M(\lambda)^{-1}$.
\end{itemize}
\end{thm}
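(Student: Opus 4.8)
The natural plan is to reduce everything to Theorems~\ref{thm1} and \ref{thm2} by inserting the Neumann operator $\AN$ between $A_{[B]}$ and $\AD$. For $\lambda$ in the non-empty open set $\rho(A_{[B]})\cap\rho(\AD)\cap\rho(\AN)$ (which contains $\CC\setminus\RR$, all three operators being self-adjoint) one writes
\[
  (A_{[B]}-\lambda)^{-m} - (\AD-\lambda)^{-m}
  = \bigl[(A_{[B]}-\lambda)^{-m} - (\AN-\lambda)^{-m}\bigr]
  + \bigl[(\AN-\lambda)^{-m} - (\AD-\lambda)^{-m}\bigr],
\]
and treats the first bracket by Theorem~\ref{thm2} with $B_1=B$ and $B_2=0$ (equivalently, by Corollary~\ref{cor.robin_neumann}) and the second by Theorem~\ref{thm1}. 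Since both of these are already available, only the bookkeeping of the combination remains.

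For item (i), whatever the value of $t\ge0$ in Corollary~\ref{cor.robin_neumann} one has $\sS_{\frac{n-1}{2m+t+1},\infty}\subseteq\sS_{\frac{n-1}{2m},\infty}(L^2(\Omega))$ because $2m+t+1>2m$; hence, by Corollary~\ref{cor.robin_neumann}\,(i) and Theorem~\ref{thm1}\,(i), both brackets lie in $\sS_{\frac{n-1}{2m},\infty}(L^2(\Omega))$, and since this ideal is a linear space so does their sum, which is \eqref{Sp3} for $\lambda$ in the triple intersection. (The Neumann contribution decays strictly faster, so the Dirichlet comparison is the bottleneck.) The remaining $\lambda\in\rho(A_{[B]})\cap\rho(\AD)$, which are necessarily eigenvalues of $\AN$, are then handled by a routine resolvent-identity argument transporting \eqref{Sp3} from a neighbouring spectral point.

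For item (ii), if $m>\tfrac{n-1}{2}$ then $\tfrac{n-1}{2m}<1$, so by \eqref{Sp_con_Spinf} both brackets, and hence their sum, are trace class; moreover $m>\tfrac{n-1}{2}\ge\tfrac{n-t}{2}-1$, so the trace formulae of Theorem~\ref{thm1}\,(ii) and Corollary~\ref{cor.robin_neumann}\,(ii) both apply. Adding them and using additivity of the trace \eqref{trace1} gives
\[
  (m-1)!\,\tr\bigl((A_{[B]}-\lambda)^{-m}-(\AD-\lambda)^{-m}\bigr)
  = \tr\!\left(\frac{d^{m-1}}{d\lambda^{m-1}}\Bigl(\bigl[(I-BM(\lambda))^{-1}B+M(\lambda)^{-1}\bigr]M'(\lambda)\Bigr)\right).
\]
It then remains to identify the boundary operator in square brackets with $V(\lambda)$: the relevant elementary identities are $(I-BM(\lambda))^{-1}B+M(\lambda)^{-1}=(I-BM(\lambda))^{-1}M(\lambda)^{-1}$ together with the commutation relation $M(\lambda)(I-BM(\lambda))=(I-M(\lambda)B)M(\lambda)$, whence $(I-BM(\lambda))^{-1}M(\lambda)^{-1}=M(\lambda)^{-1}(I-M(\lambda)B)^{-1}$; feeding these in and applying the cyclicity of the trace \eqref{trace2} term by term to the Leibniz expansion of $\tfrac{d^{m-1}}{d\lambda^{m-1}}$ (so that $\tr\bigl(\tfrac{d^{m-1}}{d\lambda^{m-1}}(XYZ)\bigr)=\tr\bigl(\tfrac{d^{m-1}}{d\lambda^{m-1}}(YZX)\bigr)$) yields \eqref{tr3}. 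The analytic content sits entirely in Theorems~\ref{thm1} and \ref{thm2}, so no fresh Schatten-class estimate is needed, and I expect the only delicate point to be this last step — keeping track of the orderings in the products of $(I-BM(\lambda))^{-1}$, $(I-M(\lambda)B)^{-1}$, $M(\lambda)^{-1}$ and $M'(\lambda)$ and invoking the resolvent-type identities and trace-cyclicity in the right places so that the result matches \eqref{tr3} exactly; this is bookkeeping rather than a genuine obstacle.
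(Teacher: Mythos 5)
Your decomposition through $\AN$, the use of Theorem~\ref{thm1} and Theorem~\ref{thm2} (via Corollary~\ref{cor.robin_neumann} with $B_1=B$, $B_2=0$), the inclusion $\sS_{\frac{n-1}{2m+t+1},\infty}\subset\sS_{\frac{n-1}{2m},\infty}$, the extension to all $\lambda\in\rho(A_{[B]})\cap\rho(\AD)$ (the paper says ``by analyticity'', you say resolvent identity -- same routine step), the check $m>\tfrac{n-1}{2}\ge\tfrac{n-t}{2}-1$, the additivity of the trace, and the identity $M(\lambda)^{-1}+\bigl(I-BM(\lambda)\bigr)^{-1}B=\bigl(I-BM(\lambda)\bigr)^{-1}M(\lambda)^{-1}$ are exactly the paper's proof, so in substance you have reproduced it. The one point where you go beyond the paper is the last reordering, and there your bookkeeping does not do what you claim: from $\bigl(I-BM(\lambda)\bigr)^{-1}M(\lambda)^{-1}=M(\lambda)^{-1}\bigl(I-M(\lambda)B\bigr)^{-1}$, a termwise cyclic permutation under the trace turns $\tr\bigl(\tfrac{d^{m-1}}{d\lambda^{m-1}}\bigl(M(\lambda)^{-1}(I-M(\lambda)B)^{-1}M'(\lambda)\bigr)\bigr)$ into $\tr\bigl(\tfrac{d^{m-1}}{d\lambda^{m-1}}\bigl((I-M(\lambda)B)^{-1}M'(\lambda)M(\lambda)^{-1}\bigr)\bigr)$, whereas $V(\lambda)M'(\lambda)=(I-M(\lambda)B)^{-1}M(\lambda)^{-1}M'(\lambda)$ requires transposing $M(\lambda)^{-1}$ and $M'(\lambda)$, which is not a cyclic move; so your final step, as stated, does not literally produce \eqref{tr3} with the ordering in the definition of $V(\lambda)$. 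You should be aware, however, that the paper is no more careful here: its proof ends by asserting that $\bigl(I-BM(\lambda)\bigr)^{-1}M(\lambda)^{-1}$ ``$=V(\lambda)$'' without comment, although as operators $\bigl(I-BM(\lambda)\bigr)^{-1}M(\lambda)^{-1}$ and $\bigl(I-M(\lambda)B\bigr)^{-1}M(\lambda)^{-1}$ differ in general. What your argument genuinely establishes is the trace formula with $\bigl(I-BM(\lambda)\bigr)^{-1}M(\lambda)^{-1}$ (equivalently $M(\lambda)^{-1}\bigl(I-M(\lambda)B\bigr)^{-1}$) in place of $V(\lambda)$, and that is precisely what the paper's computation establishes as well; relative to the paper's own proof you are missing nothing, and the residual ordering question lies in the paper's formulation of $V(\lambda)$ rather than in your decomposition.
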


\begin{proof}
(i)
Let us fix $\lambda\in\rho(A_{[B]})\cap\rho(\AD)\cap\rho(\AN)$.
From Theorems~\ref{thm1}\,(i) and \ref{thm2}\,(i) it follows that
\begin{align*}
  X_1(\lambda) \defequ (\AN-\lambda)^{-m} - (\AD-\lambda)^{-m}
  \in\sS_{\frac{n-1}{2m},\infty}, \\[1ex]
  X_2(\lambda) \defequ (A_{[B]}-\lambda)^{-m} - (\AN-\lambda)^{-m}
  \in\sS_{\frac{n-1}{2m+1},\infty}\subset\sS_{\frac{n-1}{2m},\infty},
\end{align*}
and thus
\[
  (A_{[B]}-\lambda)^{-m} - (\AD-\lambda)^{-m} = X_1(\lambda) + X_2(\lambda)
  \in \sS_{\frac{n-1}{2m},\infty}.
\]
By analyticity we can extend this to all points $\lambda$ in $\rho(A_{[B]})\cap\rho(\AD)$.

(ii) If $m > \tfrac{n-1}{2}$, then $\frac{n-1}{2m}<1$ and hence, by item (i)
and \eqref{Sp_con_Spinf}, the operator in \eqref{Sp3} is a trace class operator.
Using Theorem~\ref{thm1}\,(ii) and Corollary~\ref{cor.robin_neumann}\,(ii) we obtain
\begin{align*}
  &\tr\Bigl((A_{[B]}-\lambda)^{-m} - (\AD-\lambda)^{-m}\Bigr)
  = \tr\bigl(X_1(\lambda)  +  X_2(\lambda)\bigr) \\
  &= \frac{1}{(m-1)!}\tr\Biggl(\frac{d^{m-1}}{d\lambda^{m-1}}
  \biggl[\Bigl(M(\lambda)^{-1} + \bigl(I-BM(\lambda)\bigr)^{-1}B\Bigr)M'(\lambda)\biggr]\Biggr).
\end{align*}
Since
\begin{align*}
  & M(\lambda)^{-1} + \bigl(I-BM(\lambda)\bigr)^{-1}B \\[0.5ex]
  &= \bigl(I-BM(\lambda)\bigr)^{-1}\Bigl[\bigl(I-BM(\lambda)\bigr)+BM(\lambda)\Bigr]M(\lambda)^{-1}
  = V(\lambda),
\end{align*}
this implies \eqref{tr3}.
\end{proof}

Note that, for $B$ being a multiplication operator by a bounded function $\beta$, the statement in (i)
of the previous theorem is exactly the estimate \eqref{est2}.



\begin{thebibliography}{99}







\bibitem{ADN59}
S.~Agmon, A.~Douglis and L.~Nirenberg,
Estimates near the boundary for solutions of elliptic partial differential
equations satisfying general boundary conditions. I,
\textit{Comm.\ Pure\ Appl.\ Math.} \textbf{12} (1959), 623--727.



\bibitem{AB09} D.~Alpay and J.~Behrndt,
Generalized $Q$-functions and Dirichlet-to-Neumann maps for
elliptic differential operators,
\textit{J.\ Funct.\ Anal.} \textbf{257} (2009), 1666--1694.






%

\bibitem{B65} R.~Beals,
Non-local boundary value problems for elliptic operators,
\textit{Amer.\ J.\ Math.} \textbf{87} (1965), 315--362.

\bibitem{BL07} J.~Behrndt and M.~Langer,
Boundary value problems for elliptic partial differential operators on
bounded domains,
\textit{J.\ Funct.\ Anal.} \textbf{243} (2007), 536--565.

\bibitem{BL12} J.~Behrndt and M.~Langer,
Elliptic operators, Dirichlet-to-Neumann maps and quasi boundary triples,
to appear in: \textit{Operator Methods for Boundary Value Problems},
London Math.\ Soc.\ Lecture Note Series, vol.~404, 121--160.

\bibitem{BLLLP10}
J.~Behrndt, M.~Langer, I.~Lobanov, V.~Lotoreichik and I.\,Yu.~Popov,
A remark on Schatten--von Neumann properties of resolvent differences of
generalized Robin Laplacians on bounded domains,
\textit{J.\ Math.\ Anal.\ Appl.} \textbf{371} (2010), 750--758.

\bibitem{BLL11} J.~Behrndt, M.~Langer and  V.~Lotoreichik,
Spectral estimates for resolvent differences of self-adjoint elliptic operators,
\textit{submitted},
preprint: arXiv:1012.4596.

\bibitem{BMN08} J.~Behrndt, M.\,M.~Malamud and H.~Neidhardt,
Scattering matrices and Weyl functions,
\textit{Proc.\ London Math.\ Soc.} \textbf{97} (2008), 568--598.

\bibitem{B62} M.\,Sh.~Birman,
Perturbations of the continuous spectrum of a singular elliptic operator
by varying the boundary and the boundary conditions,
\textit{Vestnik Leningrad.\ Univ.} \textbf{17} (1962), 22--55 (in Russian);
translated in: \textit{Amer.\ Math.\ Soc.\ Transl.} \textbf{225} (2008), 19--53.


\bibitem{BS80} M.\,Sh.~Birman and M.\,Z.~Solomjak,
Asymptotic behavior of the spectrum of variational problems on
solutions of elliptic equations in unbounded domains,
\textit{Funktsional.\ Anal.\ i Prilozhen.} \textbf{14} (1980), 27--35 (in Russian);
translated in: \textit{Funct.\ Anal.\ Appl.} \textbf{14} (1981), 267--274.

%




\bibitem{B60}
F.\,E.~Browder,
On the spectral theory of elliptic differential operators. I,
\textit{Math.\ Ann.} \textbf{142} (1960/1961), 22--130.

\bibitem{BGW09} B.\,M.~Brown, G.~Grubb and I.\,G.~Wood,
$M$-functions for closed extensions of adjoint pairs of operators with
applications to elliptic boundary problems,
\textit{Math.\ Nachr.} \textbf{282} (2009), 314--347.

%

\bibitem{Br76}
V.\,M.~Bruk,
A certain class of boundary value problems with a spectral parameter in the
boundary condition,
\textit{Mat.\ Sb.\ (N.S.)} \textbf{100 (142)} (1976), 210--216 (in Russian);
translated in: \textit{Math.\ USSR-Sb.} \textbf{29} (1976), 186--192.

\bibitem{Ca02}
G.~Carron,
D\'eterminant relatif et la fonction Xi,
\textit{Amer.\ J.\ Math.} \textbf{124} (2002), 307--352.



%
%
%
%



\bibitem{DM91} V.\,A.~Derkach and M.\,M.~Malamud,
Generalized resolvents and the boundary value problems for Hermitian operators
with gaps,
\textit{J.\ Funct.\ Anal.} \textbf{95} (1991), 1--95.

\bibitem{DM95} V.\,A.~Derkach and M.\,M.~Malamud,
The extension theory of Hermitian operators and the moment problem,
\textit{J.\ Math.\ Sci.\ (New York)} \textbf{73} (1995), 141--242.

%
%
%
%
%
%
%
%
%
%
%
%
%


\bibitem{GHSZ96} F.~Gesztesy, H.~Holden, B.~Simon and Z.~Zhao,
A trace formula for multidimensional Schr\"odinger operators,
\textit{J.\ Funct.\ Anal.} \textbf{141} (1996), 449--465.

\bibitem{GM08-1} F.~Gesztesy and M.~Mitrea,
Generalized Robin boundary conditions, Robin-to-Dirichlet maps,
and Krein-type resolvent formulas for Schr\"odinger operators on bounded
Lipschitz domains,
in: \textit{Perspectives in Partial Differential Equations, Harmonic Analysis and Applications},
Proc.\ Sympos.\ Pure Math., vol.~79, Amer.\ Math.\ Soc., Providence, RI, 2008,
pp.~105--173.



\bibitem{GM11}
F.~Gesztesy and M.~Mitrea,
A description of all self-adjoint extensions
of the Laplacian and Krein-type resolvent formulas on non-smooth domains,
\textit{J.\ Anal.\ Math.} \textbf{113} (2011), 53--172.

\bibitem{GMZ07}
F.~Gesztesy, M.~Mitrea and M.~Zinchenko,
Variations on a theme of Jost and Pais,
\textit{J.\ Funct.\ Anal.} \textbf{253} (2007), 399--448.


\bibitem{GZ12}
F.~Gesztesy and M.~Zinchenko,
Symmetrized perturbation determinants and applications
to boundary data maps and Krein-type resolvent formulas,
\textit{Proc.\ London\ Math.\ Soc.} \textbf{104} (2012), 577--612.

\bibitem{GK69}
I.\,C.~Gohberg and M.\,G.~Kre\u\i{}n,
\textit{Introduction to the Theory of Linear Nonselfadjoint Operators}.
Transl.\ Math.\ Monogr., vol.~18, Amer.\ Math.\ Soc., Providence, RI, 1969.

\bibitem{GG91} V.\,I.~Gorbachuk and M.\,L.~Gorbachuk,
\textit{Boundary Value Problems for Operator Differential Equations}.
Kluwer Academic Publishers, Dordrecht, 1991.


\bibitem{G68} G.~Grubb,
A characterization of the non-local boundary value problems associated with an
elliptic operator,
\textit{Ann.\ Scuola Norm.\ Sup.\ Pisa (3)} \textbf{22} (1968), 425--513.



\bibitem{G74} G.~Grubb,
Properties of normal boundary problems for elliptic even-order systems,
\textit{Ann.\ Scuola Norm.\ Sup.\ Pisa Cl.\ Sci.\ (4)} \textbf{1} (1974), 1--61.

\bibitem{G84} G.~Grubb,
Singular Green operators and their spectral asymptotics,
\textit{Duke Math.\ J.} \textbf{51} (1984), 477--528.







\bibitem{G11-1} G.~Grubb,
Perturbation of essential spectra of exterior elliptic problems,
\textit{Appl.\ Anal.} \textbf{90} (2011), 103--123.

\bibitem{G11-2} G.~Grubb,
Spectral asymptotics for Robin problems with a discontinuous coefficient,
\textit{J.\ Spectral Theory} \textbf{1} (2011), 155--177.
%

\bibitem{G12} G.~Grubb,
Extension theory for elliptic partial differential operators with pseudodifferential methods,
to appear in: \textit{Operator Methods for Boundary Value Problems},
London Math.\ Soc.\ Lecture Note Series, vol.~404.

%


%
%
%

\bibitem{Ko75} A.\,N.~Kochubei,
Extensions of symmetric operators and symmetric binary relations,
\textit{Math.\ Zametki} \textbf{17} (1975), 41--48 (in Russian);
translated in: \textit{Math.\ Notes} \textbf{17} (1975), 25--28.

%





\bibitem{LM72} J.~Lions and E.~Magenes,
\textit{Non-Homogeneous Boundary Value Problems and Applications I}.
Springer-Verlag, Berlin--Heidelberg--New York, 1972.



\bibitem{M10} M.\,M.~Malamud,
Spectral theory of elliptic operators in exterior domains,
\textit{Russ.\ J.\ Math.\ Phys.} \textbf{17} (2010), 96--125.


\bibitem{McL00}
W.~McLean,
\textit{Strongly Elliptic Systems and Boundary Integral Equations}.
Cambridge University Press, 2000.








\bibitem{P08} A.~Posilicano,
Self-adjoint extensions of restrictions,
\textit{Oper.\ Matrices} \textbf{2} (2008), 483--506.


\bibitem{P12} O.~Post,
\textit{Spectral Analysis on Graph-Like Spaces}.
Lecture Notes in Mathematics, vol.~2039, Springer, 2012.

%
%







\bibitem{S05} B.~Simon,
\textit{Trace Ideals and their Applications}. Second edition.
Math.\ Surveys and Monographs, vol.~120,
American Mathematical Society, Providence, RI, 2005.


%


\end{thebibliography}
\end{document}